\theoremstyle{plain}
\newtheorem{thm}{Theorem}[section]
\newtheorem{cor}[thm]{Corollary}
\newtheorem{prop}[thm]{Proposition}
\newtheorem{rmk}[thm]{Remark}
\def\D{\mathrm{D}}
\def\K{\mathscr{K}}
\def\L{\mathscr{L}}
\def\P{\mathscr{P}}
\def\U{\mathscr{U}}
\def\c{\mathrm{c}}
\def\d{\mathrm{d}}
\def\h{\mathrm{h}}
\def\Cset{\mathbb{C}}
\def\Nset{\mathbb{N}}
\def\Pset{\mathbb{P}}
\def\Rset{\mathbb{R}}
\def\Sset{\mathbb{S}}
\def\id{\mathrm{id}}
\def\Re{\mathrm{Re}}
\def\epsilon{\varepsilon}
\def\theequation{\arabic{section}.\arabic{equation}}
\begin{document}

% **********************************************************
% Title of This Paper
% **********************************************************

\title[Kuramoto model with two-mode interaction]%
{Bifurcations of synchronized solutions in a continuum limit
 of the Kuramoto model with two-mode interaction depending on two graphs}
\thanks{The general content of the manuscript, in whole or in part,
 is not submitted, accepted, or published elsewhere, including conference proceedings.}
%\thanks{This work was partially supported by the JSPS KAKENHI Grant Number JP22H01138.}

\author{Kazuyuki Yagasaki}

\address{Department of Applied Mathematics and Physics, Graduate School of Informatics,
Kyoto University, Yoshida-Honmachi, Sakyo-ku, Kyoto 606-8501, JAPAN}
\email{yagasaki@amp.i.kyoto-u.ac.jp}

\date{\today}
\subjclass[2020]{45J05; 34C15; 34D06; 34C23; 37G10; 45M10; 34D20.}
\keywords{Kuramoto model; continuum limit; synchronization; 
 stationary solution; bifurcation; center manifold reduction. %stability
 }
 
\begin{abstract}
We study bifurcations of the completely synchronized state
 in a continuum limit (CL) for the Kuramoto model (KM)
 of identical oscillators with two-mode interaction depending on two graphs.
Here one of the graphs is uniform
 but may be deterministic dense, random dense or random sparse,
 and the other is a deterministic finite nearest neighbor.
We use the center manifold reduction technique,
 which is a standard one in dynamical systems, and prove that the CL suffers bifurcations
 at which the one-parameter family of completely synchronized state becomes unstable
 and a stable two-parameter family of $\ell$-humped sinusoidal shape stationary solutions
 ($\ell\ge 2$) appears.
This contrasts the author's recent result on the classical KM
 for which bifurcation behavior in its CL is very different from ones in the KM
 and difficult to explain by standard techniques in dynamical systems
 such as the center manifold reduction.
Moreover, similar bifurcation behavior is shown to occur in the KM,
 based on the previous fundamental results.
The occurrence of such bifurcations were suggested by numerical simulations
 for the deterministic graphs in a previous study.
We also demonstrate our theoretical results
 by numerical simulations for the KM with the zero natural frequency.
\end{abstract}
\maketitle

% **********************************************************
% Section 1
% **********************************************************

\section{Introduction}

Coupled oscillators on complex networks, which are typically defined by graphs,
 have recently attracted much attention
 and have been extensively studied.
They provide many mathematical models in various fields
 ranging from physics, chemistry and biology to social sciences and engineering.
Synchronization is one of typical and interesting phenomena observed in them.
Among them,
 the Kuramoto model (KM) \cite{K75,K84} is one of the most representative ones
 and has been generalized in several directions.
They have been very frequently studied, especially  to describe the synchronization phenomenon. 
See \cite{S00,PRK01,ABVRS05,ADKMZ08,DB14,PR15,RPJK16}
 for the reviews of vast literature on coupled oscillators in complex networks
 including the KM and its generalizations.

Let $G_{kn}=\langle V(G_{kn}),E(G_{kn}),W(G_{kn})\rangle$, $n\in\Nset$,
 be weighted graphs for $k=1,2$,
 where $V(G_{kn})=[n]$ and $E(G_{kn})$ are the sets of nodes and edges, respectively,
 and $W(G_{kn})$ is an $n\times n$ weight matrix given by
\begin{equation*}
(W(G_{kn}))_{ij}=
\begin{cases}
w_{ij}^{kn} & \mbox{if $(i,j)\in E(G_{kn})$};\\
0 &\rm{otherwise}.
\end{cases}
\end{equation*}
We express
\[ 
E(G_{kn})=\{(i,j)\in[n]^2\mid (W(G_{kn}))_{ij}\neq 0\}, 
\] 
where each edge is represented by an ordered pair of nodes $(i,j)$, 
 which is also denoted by $j\to i$, and a loop is allowed.
If $W(G_{kn})$ is symmetric, 
 then $G_{kn}$ represents an undirected weighted graph 
 and each edge is also denoted by $i\sim j$ instead of $j\to i$. 
When $G_{kn}$ is a simple graph, 
 $W(G_{kn})$ is a matrix whose elements are $\{0,1\}$-valued. 
When $G_{kn}$ is a random graph, 
 $W(G_{kn})$ is a random matrix. 
We say that $G_{kn}$ is a \emph{dense} graph 
 if $\# E(G_{kn})/(\# V(G_{kn}))^2>0$ as $n \rightarrow \infty$. 
If $\# E(G_{kn})/(\# V(G_{kn}))^2\rightarrow 0$ as $n \rightarrow \infty$, 
 then we call it a \emph{sparse} graph.

In this paper we consider the following KM \cite{K75,K84} of identical oscillators
 with two-mode interaction depending on such two graphs $G_{kn}$, $k=1,2$:
\begin{align}
\frac{\d}{\d t} u_i^n (t)
=&\omega
+\frac{1}{n\alpha_{1n}}\sum_{j=1}^{n}w_{ij}^{1n}\sin \left( u_j^n(t) - u_i^n(t) \right) \notag\\
&
-\frac{K}{n \alpha_{2n}} \sum_{j=1}^{n}w_{ij}^{2n}\sin 2\left( u_j^n(t) - u_i^n(t) \right) ,\quad
i \in [n],
\label{eqn:dsys}
\end{align}
where $u_i^n \colon \Rset \rightarrow\Sset^1$
 stands for the phase of oscillator at the node $i \in [n]$,
 $\omega$ is the natural frequency,  $K>0$ is the coupling constant,
 and $\alpha_{kn}>0$ is a scaling factor that is one if $G_{kn}$ is dense
 and less than one with $\alpha_{kn}\searrow 0$ and $n \alpha_{kn} \to\infty$
 as $n\rightarrow \infty$, if $G_{kn}$ is sparse for each $k=1,2$.
For any constant $\theta\in\Sset^1$,
 $u_i^n(t)=\omega t+\theta$, $i\in[n]$, represent a completely synchronized state
 and are always a solution to the KM \eqref{eqn:dsys}, as shown easily.

Moreover, the weight matrix $W(G_{kn})$ is given for each $k=1,2$ as follows.
Let $I=[0,1]$ and let $W_k^n\in L^2 (I^2)$, $n\in\Nset$, be nonnegative functions.
If $G_{kn}$, $n\in\Nset$, are deterministic dense graphs, then
\begin{equation}
w_{ij}^{kn} = \langle W_k^n\rangle_{ij}^{n}
:= n^2 \int_{I_i^n \times I_j^n}W_k^n(x,y) \d x\d y,
\label{eqn:ddg}
\end{equation}
where
\[
I_i^n:=
\begin{cases}
  [(i-1)/n,i/n) & \mbox{for $i<n$};\\
  [(n-1)/n,1] & \mbox{for $i=n$}.
\end{cases}
\]
If $G_{kn}$, $n\in\Nset$, are random dense graphs,
 then $w_{ij}^{kn}=1$ with probability 
\begin{equation}
\Pset(j \rightarrow i) = \langle W_k^n\rangle_{ij}^{n}, 
\label{eqn:rdg}
\end{equation}
where the range of $W_k^n$ is contained in $I$. 
If $G_{kn}$, $n\in\Nset$, are random sparse graphs, 
then $w_{ij}^{kn}=1$ with probability 
\begin{equation}
\Pset(j \rightarrow i) = \alpha_{kn} \langle \tilde{W}_{k}^n \rangle_{ij}^n, 
\quad \tilde{W}_{k}^n(x,y) :=\alpha_{kn}^{-1} \wedge W_k^n(x,y), 
\label{eqn:rsg} 
\end{equation} 
where $\alpha_{kn} =n^{-\gamma_k}$ with $\gamma_k\in(0,\frac{1}{2})$,
 and $a\wedge b=\min(a,b)$ for $a,b\in\Rset$.
The function $W_k^n(x,y)$ is usually called a \emph{graphon} \cite{L12}.
Such a construction of a random graph where $W_k^n(x,y)$ does not depend on $n$
 was given in \cite{M19} and used in \cite{IY23}.
We assume that there exist measurable functions $W_k\in L^2(I^2)$, $k=1,2$, such that 
\begin{equation}
\|W_k(x,y)-W_k^n(x,y)\|_{L^2(I^2)}=\int_{I^2}|W_k(x,y)-W_k^n(x,y)|^2\d x\d y\to 0
\label{eqn:Wk}
\end{equation}
as $n\to\infty$.

We specifically choose  a uniform graph $W_1^n(x,y)=W_1(x,y)=p$
 which may be deterministic dense, random dense or random sparse
 in \eqref{eqn:dsys}, where $p$ is a constant, for the first graph, and
\[
W_2^n(x,y)=\begin{cases}
1 & \mbox{if $(x,y)\in I_i^n\times I_j^n$ with $|i-j|\le n\kappa$ or $|i-j|\ge n(1-\kappa)$};\\
0 & \mbox{otherwise},
\end{cases}
\]
and
\[
W_2(x,y)=\begin{cases}
1 & \mbox{if $|x-y|\le\kappa$ or $|x-y|\ge1-\kappa$};\\
0 & \mbox{otherwise},
\end{cases}
\]
with $\kappa\in(0,\tfrac{1}{2})$,
 i.e., a $\lfloor n\kappa\rfloor$-nearest neighbor graph,
 which is simply called a nearest neighbor graph hereafter, for the second graph,
 where $\lfloor z\rfloor$ represents the maximum integer that is not greater than $z\in\Rset$.

Such coupled oscillator networks defined on two graphs as \eqref{eqn:dsys}
 were studied from the viewpoint of control \cite{GC20,GC21,GCH23,GM22}
 although they are linear.
For instance, for the KM \eqref{eqn:dsys},
 the last term is added as a control input
 to avoid undesired synchronization occurring when $K=0$
 since it may yield traffic congestion in networks and their collapse.
The additional networks are desired to be smaller and simpler than the original ones,
 like a deterministic nearest neighbor graph
 in the KM \eqref{eqn:dsys},
 from a cost perspective,
 and it is natural to consider that the dynamics on them have different properties.
See also Section~5 of \cite{IY23}.

In \cite{IY23},
 coupled oscillator networks including \eqref{eqn:dsys} were studied
 and shown to be well approximated by the corresponding continuum limits (CLs),
 for instance, which are given by
\begin{align}
\frac{\partial}{\partial t}u(t,x)
=& \omega+p\int_I \sin(u(t,y)-u(t,x))\d y\notag\\
& -K\int_I W_2(x,y) \sin 2(u(t,y)-u(t,x))\d y, \quad x \in I,
\label{eqn:csys}
\end{align}
for \eqref{eqn:dsys} and $W_1(x,y)=p$.
More general cases in which the networks depend on more than two graphs
 or the natural frequency of each oscillator is different
 were discussed in \cite{IY23}.
Similar results for such networks which are defined on single graphs
 and do not have natural frequencies depending on each node
 were obtained earlier in \cite{KM17,M14a,M14b,M19}
 although they are not applicable to \eqref{eqn:dsys} and \eqref{eqn:csys}.
Such a CL was introduced for the classic KM,
 which depends on the single complete simple graph,
 without a rigorous mathematical guarantee very early in \cite{E85},
 and fully discussed very recently in \cite{Y24}.
Similar CLs were utilized
 for the KM with nonlocal coupling and a single or the zero natural frequency
 in \cite{GHM12,WSG06}.
For any constant $\theta\in\Sset^1$,
 the CL \eqref{eqn:csys} has the solution $u(t,x)=\omega t+\theta$,
 which corresponds to the completely synchronized state
 $u_i^n(t)=\omega t+\theta$, $i\in[n]$, in the KM \eqref{eqn:dsys}
 and which is stable when $K=0$ (see Proposition~\ref{prop:3b} below).
In an adequate rotational frame,
  we can take $\omega=0$ in both the KM \eqref{eqn:dsys} and CL \eqref{eqn:csys},
  and do so hereafter.
 
The KMs with two-mode interaction such as \eqref{eqn:dsys}
 were previously studied in \cite{CD99,CN11,IY23,KP13,KP14,KP15}.
In \cite{KP13,KP14}, the case in which the natural frequencies are randomly distributed
 with a symmetric one-hump density function
 but the graphs are deterministic and complete simple in our terminology
 was analyzed by the self-consistent approach \cite{K75,K84},
 and multiple synchronized states were obtained.
In \cite{CN11} such a situation was also analyzed
 by applying a center manifold reduction technique of \cite{C15},
 which provides a mathematical foundation
 for the the self-consistent approach \cite{K75,K84},
 to a continuous model which is different from the CL \eqref{eqn:csys},
 and a bifurcation diagram of synchronized states was obtained.
Moreover, an influence of noise on the synchronization
 was discussed in \cite{CD99,KP15},
 although their approaches were not guaranteed rigorously.
We remark that in these references
 the main interest is in how synchronized motions appear in the bi-harmonic interaction 
 and how different it is from the single harmonic interaction
 as well as the complete simple graphs were considered,
 although the natural frequencies are randomly distributed.

On the other hand, the case in which the graphs are deterministic,
 one of them is complete simple and  the other is a nearest neighbor
 although the KM has the zero natural frequency (equivalently,
 it consists of the identical oscillators like \eqref{eqn:dsys})
 was studied in \cite{IY23}.
The problem is also important from the viewpoint of applications
 since its investigation provides several hints and insights
 for avoiding traffic congestion in networks and their collapse
 like those in \cite{GC20,GC21,GCH23,GM22}, as stated above.
In such an application, it seems natural
 that coupled oscillators depend not on a single network but on two different networks.
The stability of the completely synchronized state
 was discussed by analyzing the CL \eqref{eqn:csys} with $p=1$,
 of which some relationships with the KM \eqref{eqn:dsys}
 were developed there and expanded subsequently in \cite{Y24}
 (see Section~2 for more details).
Moreover, different synchronized solutions resembling unstable modes
 were observed in numerical simulations
 when the completely synchronized state seems unstable.
  
Here we study bifurcations of the completely synchronized solution
 $u(t,x)\equiv\theta$ in the CL \eqref{eqn:csys},
 where $\theta\in\Sset^1$ is any constant
 and $\omega=0$ is taken through this paper as stated above,
 using the center manifold reduction technique \cite{HI11},
 which is a standard one in dynamical systems \cite{GH83,K04}.
To the author's knowledge,
 the technique has not been utilized to analyze bifurcations in CLs of type \eqref{eqn:csys}
 anywhere else.
In addition, no reference  except for \cite{IY23} has theoretically investigated the KM
 depending on multiple graphs.
It was also shown in \cite{Y24}
 that bifurcation behavior in the classical KM is very different from ones in its CL.
In particular, $O(2^n)$ saddle-node and pitchfork bifurcations occur in the former
 while no such bifurcation in the latter.
This indicates that standard techniques in dynamical systems
 such as the center manifold reduction may not work effectively
 to analyze  bifurcations in the CL.
However, we show that the standard approach still works well in our problem.
In particular, we prove that the CL \eqref{eqn:csys} suffers bifurcations
 at which the one-parameter family of completely synchronized state becomes unstable
 and a stable two-parameter family of $\ell$-humped  sinusoidal shape stationary solutions
 ($\ell\ge 2$) appears.
Moreover, it follows from the previous fundamental results of \cite{IY23,Y24}
 that similar bifurcation behavior occurs in the KM \eqref{eqn:dsys}
 (see Remark~\ref{rmk:4a}(iii) below).
The occurrence of such bifurcations were suggested in numerical simulations
 for the deterministic graphs mentioned above in \cite{IY23}.
We also demonstrate our theoretical results
 by numerical simulations for the KM \eqref{eqn:dsys} with $\omega=0$:
A complete simple (i.e., deterministic dense),
 random dense or sparse graph is chosen as $G_{1n}$
 and a deterministic dense graph as $G_{2n}$.

The outline of this paper is as follows:
In Section~2 we briefly review the previous fundamental results
 of \cite{IY23} and \cite{Y24} in the context of the KM \eqref{eqn:dsys} and CL \eqref{eqn:csys}
 for the reader convenience.
In the remaining sections,
 we treat the KM \eqref{eqn:dsys} and CL \eqref{eqn:csys} directly.
We analyze the associated linear eigenvalue problem and bifurcations of synchronized solutions
 for the CL \eqref{eqn:csys} in Sections~3 and 4, respectively.
Based on the fundamental result reviewed in Section~2,
 we see that bifurcation behavior detected in Section~4 for the CL \eqref{eqn:csys}
 also occurs in the KM \eqref{eqn:dsys}, as stated above.
Numerical simulation results for the KM \eqref{eqn:dsys} with $\omega=0$
 are given in Section~5.

% **********************************************************
% Section 2
% **********************************************************

\section{Previous Fundamental Results}

We first review the results of \cite{IY23,Y24}
 on relationships between couples oscillator networks and their CLs
 in the context of  \eqref{eqn:dsys} and \eqref{eqn:csys} with $\omega=0$.
See Section~2 and Appendices~A and B of \cite{IY23}
 and Section~2 of \cite{Y24} for more details
 including the proofs of the theorems stated below.
The theory can be extended to more general cases.

Let $g(x)\in L^2(I)$
 and let $\mathbf{u}:\Rset\to L^2(I)$ stand for an $L^2(I)$-valued function.
We have the following on the existence and  uniqueness of solutions
 to the initial value problem (IVP) of the CL \eqref{eqn:csys}
 (see Theorem~2.1 of \cite{IY23}).
 
\begin{thm}
\label{thm:2a}
There exists a unique solution $\mathbf{u}(t)\in C^1(\Rset,L^2(I))$
 to the IVP of \eqref{eqn:csys} with
\[
u(0,x)=g(x).
\]
Moreover, the solution depends continuously on $g$.
\end{thm}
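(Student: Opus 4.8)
The plan is to recast the IVP as an abstract ordinary differential equation $\mathbf{u}'(t)=F(\mathbf{u}(t))$, $\mathbf{u}(0)=g$, in the Hilbert space $L^2(I)$, where the nonlinear vector field $F\colon L^2(I)\to L^2(I)$ is defined by
\[
F(\mathbf{u})(x)=\omega+p\int_I\sin(u(y)-u(x))\,\d y-K\int_I W_2(x,y)\sin 2(u(y)-u(x))\,\d y ,
\]
and then to invoke the Picard--Lindel\"of (Cauchy--Lipschitz) theorem for ODEs in Banach spaces. First I would verify that $F$ is well defined, i.e.\ maps $L^2(I)$ into itself: since $|\sin|\le 1$ and $0\le W_2\le 1$ with $|I|=1$, every $\mathbf{u}\in L^2(I)$ satisfies the pointwise bound $|F(\mathbf{u})(x)|\le|\omega|+|p|+K$, so $F(\mathbf{u})\in L^\infty(I)\subset L^2(I)$; in particular the image of $F$ is uniformly bounded in $L^2(I)$, which already precludes finite-time blow-up.

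The crux of the argument is a global Lipschitz estimate for $F$. Setting $w=u-v$ for $\mathbf{u},\mathbf{v}\in L^2(I)$ and using $|\sin a-\sin b|\le|a-b|$ and $|\sin 2a-\sin 2b|\le 2|a-b|$, I would bound the two integrands pointwise by $|w(y)|+|w(x)|$, up to the constants $|p|$ and $2K$ respectively. The terms containing $w(y)$ are handled by the integral operator with kernel $W_2$ (or with the constant kernel $p$): because $W_2\in L^2(I^2)$ this operator is Hilbert--Schmidt, hence bounded on $L^2(I)$ with norm at most $\|W_2\|_{L^2(I^2)}$, while for the first term Cauchy--Schwarz gives $\int_I|w(y)|\,\d y\le\|w\|_{L^2(I)}$ since $|I|=1$. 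The terms containing $w(x)$ factor out of the $y$-integral and are controlled by $\int_I W_2(x,y)\,\d y\le 1$ (respectively by $1$). Collecting these contributions should yield
\[
\|F(\mathbf{u})-F(\mathbf{v})\|_{L^2(I)}\le L\,\|\mathbf{u}-\mathbf{v}\|_{L^2(I)},\qquad L=2|p|+2K\bigl(\|W_2\|_{L^2(I^2)}+1\bigr),
\]
so that $F$ is globally Lipschitz.

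With this bound, the Picard--Lindel\"of theorem in $L^2(I)$ yields a unique solution $\mathbf{u}\in C^1(\Rset,L^2(I))$ for all $t\in\Rset$; global existence is automatic because the Lipschitz constant is uniform. Continuous dependence on $g$ would then follow from the same estimate: if $\mathbf{u}_1,\mathbf{u}_2$ solve the IVP with data $g_1,g_2$, then $\tfrac{\d}{\d t}\|\mathbf{u}_1-\mathbf{u}_2\|_{L^2}^2\le 2L\|\mathbf{u}_1-\mathbf{u}_2\|_{L^2}^2$, and Gronwall's inequality gives $\|\mathbf{u}_1(t)-\mathbf{u}_2(t)\|_{L^2}\le e^{L|t|}\|g_1-g_2\|_{L^2}$. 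The only genuinely delicate point I anticipate is the separate treatment of the \emph{diagonal} $w(x)$-terms and the \emph{off-diagonal} $w(y)$-terms in the Lipschitz estimate, arranging that $W_2$ enters solely through its $L^2(I^2)$-norm, so that the same argument also covers the random-graph graphons whose limits are merely $L^2$; the remaining steps are routine.
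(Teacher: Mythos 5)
Your proof is correct and takes essentially the same approach as the paper's: the paper itself defers the proof to Theorem~2.1 of \cite{IY23}, which likewise recasts the CL as an ODE in $L^2(I)$ with a globally Lipschitz right-hand side and obtains existence, uniqueness and continuous dependence via Picard--Lindel\"of and Gronwall, exactly as you do. The only slight inaccuracy is your closing remark that $W_2$ enters solely through $\|W_2\|_{L^2(I^2)}$ --- your estimate for the $w(x)$-terms actually uses the pointwise bound $0\le W_2\le 1$ --- but this is immaterial here since $W_2$ is $\{0,1\}$-valued.
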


We next consider the IVP of the KM \eqref{eqn:dsys}
 and turn to the issue on convergence of solutions in \eqref{eqn:dsys}
 to those in the CL \eqref{eqn:csys}.
Since the right-hand side of \eqref{eqn:dsys} is Lipschitz continuous in $u_i^n$, $i\in[n]$,
 we see by a fundamental result of ordinary differential equations
 (e.g., Theorem~2.1 of Chapter~1 of \cite{CL55})
 that the IVP of \eqref{eqn:dsys} has a unique solution.
Given a solution $u_n(t)=(u_1^n(t),\ldots, u_n^n(t))$ to the IVP of \eqref{eqn:dsys},
 we define an $L^2(I)$-valued function $\mathbf{u}_n:\Rset\to L^2(I)$ as
\begin{equation*}
\mathbf{u}_n(t) = \sum^{n}_{i=1} u_i^n(t) \mathbf{1}_{I_i^n},
%\label{eqn:un}
\end{equation*}
where $\mathbf{1}_{I_i^n}$ represents the characteristic function of $I_i^n$, $i\in[n]$.
Let $\|\cdot\|$ denote the norm in $L^2(I)$.
In our setting as stated in Section~1,
 we slightly modify the arguments given in the proof of Theorem~2.3 of \cite{IY23}
 to obtain the following
 (see also Remark~2.3 of \cite{Y24}).

\begin{thm}
\label{thm:2b}
If $\mathbf{u}_{n}(t)$ is the solution to the IVP of \eqref{eqn:dsys}
 with the initial condition
\[
\lim_{n\to\infty}\|\mathbf{u}_n(0)-\mathbf{u}(0)\|=0\quad\mbox{a.s.},
\]
then for any $\tau > 0$ we have
\[
\lim_{n \rightarrow \infty}\max_{t\in[0,\tau]}\|\mathbf{u}_n(t)-\mathbf{u}(t)\|=0
\quad\mbox{a.s.},
\]
where $\mathbf{u}(t)$ represents the solution
 to the IVP of the CL \eqref{eqn:csys}.
\end{thm}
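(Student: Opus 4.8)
The plan is to recast both evolutions as integral equations in $L^2(I)$ and to close a Gronwall estimate on the difference of their solutions. With $\omega=0$ introduce the continuum vector field
\[
F[\mathbf{v}](x) := p\int_I \sin\bigl(v(y)-v(x)\bigr)\,\d y
 - K\int_I W_2(x,y)\sin 2\bigl(v(y)-v(x)\bigr)\,\d y,
\]
and let $F_n$ be the piecewise-constant vector field whose value on the cell $I_i^n$ is the corresponding discrete sum of \eqref{eqn:dsys}, with $v_j$ the value of the step function $\mathbf{v}$ on $I_j^n$, so that $\frac{\d}{\d t}\mathbf{u}_n=F_n[\mathbf{u}_n]$ while $\mathbf{u}$ satisfies $\frac{\d}{\d t}\mathbf{u}=F[\mathbf{u}]$ in $L^2(I)$ by Theorem~\ref{thm:2a}. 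Integrating in $t$ and subtracting yields
\[
\mathbf{u}_n(t)-\mathbf{u}(t)
 = \bigl(\mathbf{u}_n(0)-\mathbf{u}(0)\bigr)
 + \int_0^t\bigl(F_n[\mathbf{u}_n(s)]-F[\mathbf{u}(s)]\bigr)\,\d s,
\]
and I would split the integrand as $\bigl(F_n[\mathbf{u}_n]-F[\mathbf{u}_n]\bigr)+\bigl(F[\mathbf{u}_n]-F[\mathbf{u}]\bigr)$, the first a consistency (discretization) error and the second a stability term.

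First I would verify that $F$ is globally Lipschitz on $L^2(I)$. Since $\sin$ and $\sin 2(\cdot)$ are bounded and Lipschitz, the pointwise estimate $\bigl|\sin(v(y)-v(x))-\sin(w(y)-w(x))\bigr|\le|v(y)-w(y)|+|v(x)-w(x)|$ together with the Cauchy--Schwarz inequality (using $W_2\in L^2(I^2)$ for the second term) gives $\|F[\mathbf{v}]-F[\mathbf{w}]\|\le L\|\mathbf{v}-\mathbf{w}\|$, with $L$ depending only on $p$, $K$ and $\|W_2\|_{L^2(I^2)}$. This bounds the stability term by $L\int_0^t\|\mathbf{u}_n(s)-\mathbf{u}(s)\|\,\d s$. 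Because the nonlinearities are globally Lipschitz and bounded, no a priori bound on the solutions is needed and the estimate is valid on all of $[0,\tau]$.

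The heart of the proof is the consistency estimate $\rho_n:=\sup_{s\in[0,\tau]}\|F_n[\mathbf{u}_n(s)]-F[\mathbf{u}_n(s)]\|\to 0$ almost surely. Since the cell values of $F_n[\mathbf{v}]-F[\mathbf{v}]$ involve the sines only through factors of modulus at most one, the difference $\|F_n[\mathbf{v}]-F[\mathbf{v}]\|$ admits a bound independent of the step function $\mathbf{v}$; hence $\rho_n$ is controlled by a quantity not depending on the particular trajectory, and a single Borel--Cantelli argument over $n$ will suffice. The second, nearest-neighbor graph is deterministic, so its contribution reduces to a Riemann-sum approximation of the integral operator together with the $L^2$-convergence $\|W_2-W_2^n\|_{L^2(I^2)}\to 0$ of \eqref{eqn:Wk}, both purely analytic. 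For the uniform first graph $W_1=p$ the deterministic dense case is again a Riemann sum; the random dense and random sparse cases require replacing the random weights $\tfrac{1}{\alpha_{1n}}w_{ij}^{1n}$ by their expectations, which concentrate on $p$. This is where the almost-sure claim is earned: one bounds the fluctuations of the relevant random sums by a concentration inequality and invokes Borel--Cantelli along $n\in\Nset$ to upgrade convergence in probability to almost-sure convergence, exactly as in the proof of Theorem~2.3 of \cite{IY23}. The required changes are only bookkeeping---here the randomness enters through the single uniform graph while the second kernel is deterministic---so those estimates apply with minor modification.

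Collecting the three pieces gives
\[
\|\mathbf{u}_n(t)-\mathbf{u}(t)\|
 \le \|\mathbf{u}_n(0)-\mathbf{u}(0)\| + \tau\rho_n
 + L\int_0^t\|\mathbf{u}_n(s)-\mathbf{u}(s)\|\,\d s,
\]
and Gronwall's inequality yields $\max_{t\in[0,\tau]}\|\mathbf{u}_n(t)-\mathbf{u}(t)\|\le\bigl(\|\mathbf{u}_n(0)-\mathbf{u}(0)\|+\tau\rho_n\bigr)\e^{L\tau}$. The initial-data term vanishes a.s.\ by hypothesis and $\rho_n\to 0$ a.s., so the right-hand side tends to zero almost surely, proving the claim. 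I expect the concentration and Borel--Cantelli step for the random-graph consistency error to be the main obstacle, since it is the only place where probabilistic tools and the precise sparsity scaling $\alpha_{1n}=n^{-\gamma_1}$, $\gamma_1\in(0,\tfrac12)$, come into play.
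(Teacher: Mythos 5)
Your overall skeleton --- recasting both systems as integral equations, splitting the error into a stability term (handled by the global Lipschitz property of $F$ on $L^2(I)$) and a consistency term $\rho_n$, and closing with Gronwall --- is exactly the structure of the proof the paper invokes: it does not spell the argument out but defers to the proof of Theorem~2.3 of \cite{IY23}, slightly modified to the present setting. Your Lipschitz estimate and your treatment of the deterministic terms (the Riemann-sum error for the nearest-neighbor kernel, controlled by $\|W_2^n-W_2\|_{L^2(I^2)}$ via Cauchy--Schwarz) are sound.

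There is, however, a genuine gap at the crux of the consistency step. You assert that because the sines enter "only through factors of modulus at most one," the difference $\|F_n[\mathbf{v}]-F[\mathbf{v}]\|$ admits a bound independent of the step function $\mathbf{v}$, so that "a single Borel--Cantelli argument over $n$ will suffice." For the random graphs this justification fails: pulling absolute values inside the sum gives $\tfrac{1}{n}\sum_j\bigl|w_{ij}^{1n}/\alpha_{1n}-p\bigr|$, which is $O(1)$, not $o(1)$ --- for the dense random graph each summand has mean $2p(1-p)$, and for the sparse graph the sum is again of order $p$. Smallness of the consistency error comes only from cancellation in the signed sum, and since the trajectory $\mathbf{u}_n(s)$ depends on the same randomness as the graph, you cannot fix $\mathbf{v}$, apply a concentration inequality, and then substitute the random trajectory. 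The device that actually yields a trajectory-independent bound --- and the reason a single Borel--Cantelli argument suffices in \cite{IY23,M19} --- is the sine addition formula: writing $\sin(v_j-v_i)=\sin v_j\cos v_i-\cos v_j\sin v_i$ separates the $i$- and $j$-dependence, so the random part of the consistency error becomes a bilinear form $\tfrac{1}{n}\langle a,Z_nb\rangle$ with $\|a\|,\|b\|$ controlled and $Z_n$ the centered, rescaled adjacency matrix; it is then dominated by the operator norm of $\tfrac{1}{n}Z_n$, a quantity genuinely independent of $\mathbf{v}$, and it is to this operator norm that the concentration estimate, the sparsity restriction $\gamma_1\in(0,\tfrac12)$, and Borel--Cantelli are applied. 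Without this step your $\rho_n$ is not shown to tend to zero and the Gronwall estimate does not close; with it, your argument becomes precisely the one from \cite{IY23} that the paper cites.
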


We consider the case in which $G_{1n}$, $n\in\Nset$, are deterministic dense and complete.
Let $\boldsymbol{\theta}$ represent the constant function $u=\theta$ for $\theta\in\Sset^1$,
 and let
\[
T_n^l\mathbf{u}_n(t)=\sum_{i=1}^{n-l}u_{i+l}^n(t)\mathbf{1}_i^n
 +\sum_{i=n-l+1}^{n}u_{i-n+l}^n(t)\mathbf{1}_i^n,\quad
l\in[n],
\]
and
\[
T^\psi\mathbf{u}(t)=\begin{cases}
u(t,x+\psi) & \mbox{for $x\le1-\psi$};\\
u(t,x+\psi-1)  & \mbox{for $x>1-\psi$},
\end{cases}\quad
\psi\in[0,1).
\]
Note that $T_n^n\mathbf{u}_n(t)=\mathbf{u}_n(t)$.
If $\bar{\mathbf{u}}_n(t)$ is a solution to the KM \eqref{eqn:dsys},
 then so is $T_n^l\bar{\mathbf{u}}_n(t)+\boldsymbol{\theta}$
 for any $\theta\in\Sset^1$ and $l\in[n]$
 by the rotation and translation symmetry  of \eqref{eqn:dsys}.
Indeed, assume that $\bar{\mathbf{u}}_n(t)$ is a solution to the KM \eqref{eqn:dsys}.
Obviously, $\bar{\mathbf{u}}_n(t)+\boldsymbol{\theta}$ is a solution
 to the KM \eqref{eqn:dsys}.
Moreover, applying $T_n^l$ to both side of \eqref{eqn:dsys}
 with $\mathbf{u}_n(t)=\bar{\mathbf{u}}_n(t)$ and $\omega=0$, we obtain
\begin{align*}
\frac{\d}{\d t}\bar{u}_{i+k}(t)
=&\frac{p}{n\alpha_{1n}}\Biggl(\sum_{j=1}^{n-l}
 \sin\left(\bar{u}_{j+l}^n(t)-\bar{u}_{i+k}^n(t)\right)\notag\\
 &\quad
+\sum_{j=n-l+1}^{n}
 \sin\left(\bar{u}_{j-n+l}^n(t)-\bar{u}_{i+k}^n(t)\right)\Biggr)\notag\\
&
-\frac{K}{n \alpha_{2n}}\Biggl(\sum_{j=1}^{n-l}w_{ij}^{2n}
 \sin 2\left(\bar{u}_{j+l}^n(t)-\bar{u}_{i+k}^n(t)\right)\notag\\
&\quad
 +\sum_{j=n-l+1}^{n}w_{ij}^{2n}
 \sin 2\left(\bar{u}_{j-n+l}^n(t)-\bar{u}_{i+k}^n(t)\right)\Biggr),\quad
 i\in[n],
\end{align*}
where $k=l$ or $-n+l$, depending on whether $i\le n-l$ or not.
Here we have used the relations $w_{ij}^{1n}=p$
 and $w_{i+l,j+l}^{2n}=w_{ij}^{2n}$, $i,j\in[n]$.
The above expression implies
 that $T_n^l\bar{\mathbf{u}}_n(t)$ is also a solution to the KM \eqref{eqn:dsys}.
Similarly, if $\bar{\mathbf{u}}(t)$ is a solution to the CL \eqref{eqn:csys},
 then so is $T^\psi\bar{\mathbf{u}}(t)+\boldsymbol{\theta}$
 for any $\theta\in\Sset^1$ and $\psi\in[0,1)$.
Let
\[
\U_{n}=\{T_n^l\bar{\mathbf{u}}_n(t)+\boldsymbol{\theta}
 \mid\theta\in\Sset^1,l\in[n]\}
\]
and
\[
\U=\{T^\psi\bar{\mathbf{u}}(t)+\boldsymbol{\theta}\mid\theta\in\Sset^1,\psi\in I\}
\]
denote such families of solutions to \eqref{eqn:dsys} and \eqref{eqn:csys},
 respectively.
We say that $\U_n$ (resp. $\U$) is \emph{stable}
 if solutions starting in its (smaller) neighborhood
 remain in its (larger but still small) neighborhood for $t\ge 0$,
 and \emph{asymptotically stable} if $\U_n$ (resp. $\U$) is stable
 and the distance between such solutions and $\U_n$ (resp. $\U$) converges to zero
 as $t\to\infty$.
We obtain the following result,
 slightly modifying the proofs of Theorem~2.7 in \cite{IY23} and Theorem~2.3 of \cite{Y24}.

\begin{thm}
\label{thm:2c}
Let $G_{1n}$, $n\in\Nset$, be deterministic and complete.
Suppose that the KM\eqref{eqn:dsys} and CL \eqref{eqn:csys}
 have solutions $\bar{\mathbf{u}}_n(t)$ and $\bar{\mathbf{u}}(t)$, respectively, such that
\begin{equation}
\lim_{n\to\infty}\|\bar{\mathbf{u}}_n(t)-\bar{\mathbf{u}}(t)\|=0
\label{eqn:2c}
\end{equation}
for any $t\in[0,\infty)$.
Then the following hold$:$
\begin{enumerate}
\setlength{\leftskip}{-1.8em}
\item[\rm(i)]
If $\U_n$ is stable $($resp. asymptotically stable$)$ for $n>0$ sufficiently large,
 then $\U$ is also stable $($resp. asymptotically stable$);$
\item[\rm(ii)]
If $\U$ is stable, then for any $\epsilon,\tau>0$ there exists $\delta>0$
 such that for $n>0$ sufficiently large,
 if $\mathbf{u}_n(t)$ is a solution to the KM \eqref{eqn:dsys} satisfying
\begin{equation}
d_n(\mathbf{u}_n(0),\U_n)<\delta,
\label{eqn:2c1}
\end{equation}
then
\begin{equation}
\max_{t\in[0,\tau]}d_n(\mathbf{u}_n(t),\U_n)<\epsilon,
\label{eqn:2c2}
\end{equation}
where
\[
d_n(\mathbf{u}_n(t),\U_n)=\min_{\theta\in\Sset^1}\min_{l\in[n]}
\|\mathbf{u}_n(t)-T_n^l\bar{\mathbf{u}}_n(t)-\boldsymbol\theta\|.
\]
Moreover, if $\U$ is asymptotically stable, then
\begin{equation}
\lim_{t\to\infty}\lim_{n\to\infty}
d_n(\mathbf{u}_n(t),\U_n)=0,
\label{eqn:thm2c}
\end{equation}
where $\mathbf{u}_n(t)$ is any solution to \eqref{eqn:dsys}
 such that $\mathbf{u}_n(0)$ is contained in the basin of attraction for $\U$.
\end{enumerate}
\end{thm}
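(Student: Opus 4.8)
The plan is to reduce everything to a single distance comparison estimate and then feed it into the finite-time approximation of Theorem~\ref{thm:2b}. For each fixed $t$ write $\U(t)=\{T^\psi\bar{\mathbf{u}}(t)+\boldsymbol\theta\mid\theta\in\Sset^1,\psi\in I\}$ and $\U_n(t)=\{T_n^l\bar{\mathbf{u}}_n(t)+\boldsymbol\theta\mid\theta\in\Sset^1,l\in[n]\}$, so that $d(\mathbf{u}(t),\U)$ and $d_n(\mathbf{u}_n(t),\U_n)$ are the $L^2(I)$-distances to these time-$t$ slices. The key observation is that $T_n^l$ acts on the step function $\bar{\mathbf{u}}_n(t)$ exactly as the continuous shift $T^{l/n}$, and $T^\psi$ is an $L^2$-isometry. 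Choosing $l$ with $|l/n-\psi|\le 1/n$ on one side and $\psi=l/n$ on the other, I would prove
\[
|d_n(\mathbf{v},\U_n)-d(\mathbf{v},\U)|\le \omega_{\bar{\mathbf{u}}(t)}(1/n)+\|\bar{\mathbf{u}}_n(t)-\bar{\mathbf{u}}(t)\|=:\rho_n(t),
\]
where $\omega_{\bar{\mathbf{u}}(t)}$ is the $L^2$-modulus of continuity of $\psi\mapsto T^\psi\bar{\mathbf{u}}(t)$; by \eqref{eqn:2c} and continuity of translation, $\rho_n(t)\to0$ for each $t$. This lets me pass freely between the discrete and continuum distances.

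For part~(ii) I would argue by contradiction on finite intervals, exactly the regime of Theorem~\ref{thm:2b}. If \eqref{eqn:2c2} failed for some $\epsilon,\tau$, I would extract $n_m\to\infty$ and KM solutions with $d_{n_m}(\mathbf{u}_{n_m}(0),\U_{n_m})<1/m$ yet $\max_{[0,\tau]}d_{n_m}(\mathbf{u}_{n_m}(t),\U_{n_m})\ge\epsilon$. By compactness of $\Sset^1\times I$ the nearly optimal base points converge, and the comparison estimate gives $\mathbf{u}_{n_m}(0)\to\mathbf{v}_0\in\U(0)$; Theorem~\ref{thm:2b} then forces $\mathbf{u}_{n_m}(t)$ to converge uniformly on $[0,\tau]$ to the CL solution through $\mathbf{v}_0$, which lies in $\U$ for all $t$ by the translation/rotation invariance recorded above. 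Since $d(\mathbf{u}(t),\U)=0$ along this solution, combining with $\rho_{n_m}(t)\to0$ yields $\max_{[0,\tau]}d_{n_m}(\mathbf{u}_{n_m}(t),\U_{n_m})\to0$, a contradiction. For \eqref{eqn:thm2c} I would instead take the limits in the stated order: the inner limit $n\to\infty$ equals $d(\mathbf{u}(t),\U)$ by Theorem~\ref{thm:2b} and the comparison estimate, and the outer limit $t\to\infty$ vanishes because asymptotic stability of $\U$ attracts the CL solution launched in its basin.

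Part~(i) is the genuinely hard direction, since stability and asymptotic stability of $\U_n$ are statements for all $t\ge0$ while Theorem~\ref{thm:2b} controls only finite horizons. I would again argue by contradiction: if $\U$ were not stable, there would be CL solutions $\mathbf{u}^{(m)}$ with $d(\mathbf{u}^{(m)}(0),\U)\to0$ that first reach distance $\epsilon_0$ from $\U$ at some time $t_m$. Continuous dependence (Theorem~\ref{thm:2a}) forces $t_m\to\infty$, so no single interval suffices; instead I would approximate $\mathbf{u}^{(m)}$ on the finite interval $[0,t_m]$ by a KM solution $\mathbf{w}_{n_m}$ via Theorem~\ref{thm:2b}, transporting the two endpoints through the comparison estimate to get $d_{n_m}(\mathbf{w}_{n_m}(0),\U_{n_m})\to0$ but $d_{n_m}(\mathbf{w}_{n_m}(t_m),\U_{n_m})\to\epsilon_0$. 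The asymptotic-stability case is handled analogously, replacing ``reaches $\epsilon_0$'' by ``fails to approach $\U_n$.''

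The main obstacle is precisely the clash of limits at this last step: to contradict stability of $\U_{n_m}$ I need a single $\delta_0>0$ valid for \emph{all} large $n$, i.e.\ stability of $\U_n$ uniform in $n$, whereas the bare hypothesis supplies only a possibly $n$-dependent $\delta_n$ that might degenerate as $n\to\infty$. I expect to close this gap not from the abstract hypothesis alone but from the concrete structure of the model: the families $\U_n$ are relative equilibria whose (asymptotic) stability is governed by the linearization analyzed in Sections~3--4, and the relevant spectral gap is uniform in $n$ for the complete graph $G_{1n}$. That uniform gap furnishes $n$-independent stability and attraction constants, after which the first-exit-time argument closes and transfers (asymptotic) stability of $\U_n$ to $\U$.
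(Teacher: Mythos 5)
Your comparison estimate between $d_n(\cdot,\U_n)$ and $d(\cdot,\U)$ (via the identity of $T_n^l$ with $T^{l/n}$ on step functions and the isometry of $T^\psi$) is correct, and your treatment of part~(ii) and of \eqref{eqn:thm2c} --- reduce everything to finite horizons and invoke Theorem~\ref{thm:2b} --- is essentially the mechanism that the paper's own proof (imported, with slight modifications, from Theorem~2.7 of \cite{IY23} and Theorem~2.3 of \cite{Y24}) rests on. One detail needs repair there: to conclude $\max_{t\in[0,\tau]}d_{n_m}(\mathbf{u}_{n_m}(t),\U_{n_m})\to0$ you use $\rho_{n_m}(t)\to0$ uniformly on $[0,\tau]$, whereas hypothesis \eqref{eqn:2c} is only pointwise in $t$; this is fixable because in the setting of the theorem the right-hand side of \eqref{eqn:dsys} is bounded by $p+K$, so the $\bar{\mathbf{u}}_n$ are Lipschitz in $t$ uniformly in $n$ and pointwise convergence upgrades to uniform convergence on compact time intervals.

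The genuine gap is part~(i), and it sits exactly where you admit it does. Stability of each $\U_n$ supplies only $n$-dependent moduli $\delta_n(\epsilon)$, and your contradiction scheme necessarily chooses $n_m$ \emph{after} $m$ (Theorem~\ref{thm:2b} controls only the finite interval $[0,t_m]$), so nothing prevents $\delta_{n_m}(\epsilon_0)$ from collapsing faster than the $o(1)$ closeness you produce; no contradiction with stability of $\U_{n_m}$ results. Your proposed repair --- a spectral gap uniform in $n$ taken from the analysis of Sections~3--4 --- is not admissible. First, Theorem~\ref{thm:2c} is a statement about \emph{arbitrary} families of (generally time-dependent) solutions $\bar{\mathbf{u}}_n(t)$, $\bar{\mathbf{u}}(t)$ satisfying only \eqref{eqn:2c}; the hypotheses give no stationarity, no relative-equilibrium structure, and no linearization to which a spectral gap could attach. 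Second, Sections~3--4 analyze the linearization of the CL \eqref{eqn:csys}, not of the KM \eqref{eqn:dsys}, and inferring uniform-in-$n$ stability constants for the KM from CL spectral data is precisely the kind of transfer the paper warns against: Remark~\ref{rmk:2a} and Remark~\ref{rmk:2c}(i) record that $\U_n$ can be unstable for every large $n$ while $\U$ is asymptotically stable, so the KM and CL stability properties do not determine one another at this level of generality. Third, using the CL linearization to certify stability of $\U_n$ is circular here, since the entire purpose of part~(i) is to pass stability information from the KM to the CL. What is actually needed is a stability (and, for the asymptotic case, attraction) modulus for $\U_n$ that is uniform in $n$ --- a strengthening your write-up concedes it cannot extract from the hypothesis, and which your appeal to the bifurcation analysis does not legitimately supply. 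As written, part~(i) of your proposal is not a proof.
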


\begin{rmk}
\label{rmk:2a}
$\U_n$ may not be stable or asymptotically stable in the KM \eqref{eqn:dsys}
 for $n>0$ sufficiently large even if so is $\U$ in the CL \eqref{eqn:csys}.
In the definition of stability and asymptotic stability of solutions to the CL \eqref{eqn:csys},
 we cannot distinguish two solutions that are different only in a set
 with the Lebesgue measure zero.
\end{rmk}

We have the following as a corollary of Theorem~\ref{thm:2c},
 with neither assuming the existence
 of the solution $\bar{\mathbf{u}}_n(t)$ to the KM \eqref{eqn:dsys} satisfying \eqref{eqn:2c}
 nor restricting $G_{1n}$, $n\in\Nset$, to deterministic dense graphs
 (see the proof of Theorem~2.4(ii) and Corollary~2.6 of \cite{Y24}).

\begin{cor}
\label{cor:2a}
Suppose that the CL \eqref{eqn:csys} has a solution $\bar{\mathbf{u}}(t)$
 and $\U=\{T^\psi\bar{\mathbf{u}}(t)+\boldsymbol{\theta}\mid\theta\in\Sset^1,\psi\in I\}$ is stable.
Then for any $\epsilon,\tau>0$ there exists $\delta>0$ such that for $n>0$ sufficiently large,
 if $\mathbf{u}_n(t)$ is a solution to the KM \eqref{eqn:dsys} satisfying
\[
d(\mathbf{u}_n(0),\U)<\delta\quad\mbox{a.s.},
\]
then
\[
\max_{t\in[0,\tau]}d(\mathbf{u}_n(t),\U)<\epsilon\quad\mbox{a.s.}
\]
Moreover, if $\U$ is asymptotically stable, then
\[
\lim_{t\to\infty}\lim_{n\to\infty}d(\mathbf{u}_n(t),\U)=0\quad\mbox{a.s.},
\]
where $\mathbf{u}_n(t)$ is any solution to \eqref{eqn:dsys}
 such that $\mathbf{u}_n(0)$ is contained in the basin of attraction for $\U$ a.s., and
\[
d(\mathbf{u}(t),\U)=\min_{\theta\in\Sset^1,\psi\in[0,1)}
 \|\mathbf{u}(t)-T^\psi\bar{\mathbf{u}}(t)-\mathbf{\theta}\|.
\]
\end{cor}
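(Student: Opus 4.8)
The plan is to compare each solution $\mathbf{u}_n(t)$ of the KM \eqref{eqn:dsys} not with a KM solution tracking $\bar{\mathbf{u}}(t)$ (which we are no longer assuming to exist), but with the solution of the CL \eqref{eqn:csys} issuing from the \emph{same} initial datum. Concretely, for each $n$ let $\mathbf{v}_n(t)$ denote the unique solution of the CL \eqref{eqn:csys} with $\mathbf{v}_n(0)=\mathbf{u}_n(0)$, which exists by Theorem~\ref{thm:2a}. I would then split the distance by the triangle inequality in $L^2(I)$ as
\[
d(\mathbf{u}_n(t),\U)\le\|\mathbf{u}_n(t)-\mathbf{v}_n(t)\|+d(\mathbf{v}_n(t),\U),
\]
and control the two terms separately: the first by the convergence of the KM to the CL (Theorem~\ref{thm:2b}), the second by the assumed stability of $\U$ under the CL flow. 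This split is what lets us dispense with $\bar{\mathbf{u}}_n(t)$ and with any restriction on $G_{1n}$, since all the information we use about the KM is carried through Theorem~\ref{thm:2b}.

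For the finite-time assertion, fix $\epsilon,\tau>0$. Since $\U$ is stable for \eqref{eqn:csys}, there is $\eta>0$ such that any CL solution with $d(\cdot,\U)<\eta$ at $t=0$ stays within distance $\epsilon/2$ of $\U$ for all $t\ge0$; I would set $\delta=\eta$. If $d(\mathbf{u}_n(0),\U)<\delta$ a.s., then $d(\mathbf{v}_n(0),\U)=d(\mathbf{u}_n(0),\U)<\eta$, so $d(\mathbf{v}_n(t),\U)<\epsilon/2$ for all $t\ge0$, which bounds the second term uniformly in $t$. For the first term, because $\mathbf{u}_n(t)$ and $\mathbf{v}_n(t)$ start from the identical datum $\mathbf{u}_n(0)$, the initial-condition contribution to the error vanishes and only the graph-approximation error remains, so Theorem~\ref{thm:2b} (or, more precisely, the estimate in its proof) yields $\max_{t\in[0,\tau]}\|\mathbf{u}_n(t)-\mathbf{v}_n(t)\|<\epsilon/2$ a.s. for all $n$ sufficiently large. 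The triangle inequality then gives $\max_{t\in[0,\tau]}d(\mathbf{u}_n(t),\U)<\epsilon$ a.s., as required.

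For the asymptotic assertion I would take the inner limit first. Fixing $t$ and applying Theorem~\ref{thm:2b} on $[0,t]$ to the matched pair $(\mathbf{u}_n,\mathbf{v}_n)$ gives $\|\mathbf{u}_n(t)-\mathbf{v}_n(t)\|\to0$ a.s. Writing $g$ for the basin point approached by $\mathbf{u}_n(0)$ in $L^2(I)$ and $\mathbf{u}(t)$ for the CL solution with $\mathbf{u}(0)=g$, continuous dependence (Theorem~\ref{thm:2a}) gives $\mathbf{v}_n(t)\to\mathbf{u}(t)$, so that $\lim_{n\to\infty}d(\mathbf{u}_n(t),\U)=d(\mathbf{u}(t),\U)$ a.s. Since $g$ lies in the basin of attraction of $\U$, asymptotic stability yields $\lim_{t\to\infty}d(\mathbf{u}(t),\U)=0$, and the iterated limit follows.

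The main obstacle I anticipate is the first term in the split, namely justifying $\max_{t\in[0,\tau]}\|\mathbf{u}_n(t)-\mathbf{v}_n(t)\|\to0$ when the comparison CL solution $\mathbf{v}_n$ itself depends on $n$ through its initial datum. Theorem~\ref{thm:2b} is phrased for a \emph{fixed} CL solution, so I would need the convergence estimate to be uniform over initial data lying in a fixed bounded set. This is precisely what the Gronwall-type bound underlying Theorem~\ref{thm:2b} provides once the initial-condition difference is set to zero: the residual is the $L^2$ graph-approximation error governed by \eqref{eqn:Wk} together with, for random $G_{1n}$, the a.s.\ concentration estimates, which is the reason every conclusion holds only almost surely. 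Re-deriving this uniform estimate, rather than quoting the statement of Theorem~\ref{thm:2b} verbatim, is the step that carries the real work.
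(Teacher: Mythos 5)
Your proposal is correct and takes essentially the same route as the paper: as Remark~\ref{rmk:2b}(i) indicates, the paper's proof (deferred to Theorem~2.4(ii) and Corollary~2.6 of \cite{Y24}) rests solely on the KM-to-CL approximation result (here Theorem~\ref{thm:2b}) combined with the assumed stability of $\U$ under the CL flow, which is exactly your triangle-inequality split through the CL solution issued from the matched initial datum. Your closing observation---that one must invoke the uniform Gronwall-type estimate from the \emph{proof} of Theorem~\ref{thm:2b} (since the comparison CL solution varies with $n$) rather than its qualitative statement---is precisely the ``slight modification'' of the arguments of \cite{Y24} that the paper alludes to.
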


\begin{rmk}\
\label{rmk:2b}
\begin{enumerate}
\setlength{\leftskip}{-1.8em}
\item[\rm(i)]
In Corollary~$2.6$ of {\rm\cite{Y24}} only complete simple graphs were treated
 but Corollary~{\rm\ref{cor:2a}} can be proven similarly
 since its proof relies only on Theorem~$2.2$ of {\rm\cite{Y24}},
 of which extension to \eqref{eqn:dsys} and \eqref{eqn:csys} is Theorem~{\rm\ref{thm:2b}}.
\item[\rm(ii)]
Corollary~$\ref{cor:2a}$ implies that $\U$ behaves
 as if it is an $($asymptotically$)$ stable family of solutions in the KM \eqref{eqn:dsys}.
\end{enumerate}
\end{rmk}

Finally, we obtain the following results,
 slightly modifying the proofs of Theorems~2.7 and 2.9 in \cite{Y24}.
 
\begin{thm}
\label{thm:2d}
Let $G_{1n}$, $n\in\Nset$, be deterministic and complete.
Suppose that the hypothesis of Theorem~$\ref{thm:2c}$ holds.
Then the following hold$:$
\begin{enumerate}
\setlength{\leftskip}{-1.8em}
\item[\rm(i)]
If $\U_n$ is unstable for $n>0$ sufficiently large
 and no stable family of solutions to the KM \eqref{eqn:dsys}
 converges to $\U$  as $n\to\infty$,
 then $\U$ is unstable$;$
\item[\rm(ii)]
If $\U$ is unstable, then so is $\U_n$ for $n>0$ sufficiently large.
\end{enumerate}
\end{thm}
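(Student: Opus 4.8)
The plan is to establish both parts by contraposition, drawing only on the finite-time approximation of Theorem~\ref{thm:2b} and on the stability correspondence already recorded in Theorem~\ref{thm:2c}. The common technical device is a comparison of the two distances: whenever $\mathbf{u}_n(t)\to\mathbf{u}(t)$ in $L^2(I)$, the standing convergence $\bar{\mathbf{u}}_n(t)\to\bar{\mathbf{u}}(t)$ from the hypothesis of Theorem~\ref{thm:2c} yields
\[
\lim_{n\to\infty}d_n(\mathbf{u}_n(t),\U_n)=d(\mathbf{u}(t),\U).
\]
Indeed, $T_n^l$ acts as the cyclic translation by $l/n$, the grid $\{l/n\mid l\in[n]\}$ becomes dense in $[0,1)$, and $\psi\mapsto T^\psi\bar{\mathbf{u}}(t)$ is strongly continuous in $L^2(I)$, so the discrete minimization over $l\in[n]$ converges to the continuous minimization over $\psi\in[0,1)$. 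I would record this as a short lemma; it is implicit in the proofs of Theorems~2.7 and 2.9 of \cite{Y24}.

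Part (ii) then follows by upgrading the contrapositive of Theorem~\ref{thm:2c}(i) from a subsequence to all large $n$. Suppose $\U_n$ fails to be unstable for all large $n$; then $\U_n$ is stable along some subsequence $(n_j)$. The argument that derives stability of $\U$ from stability of $\U_n$ in Theorem~\ref{thm:2c}(i) uses only that $n\to\infty$ so that the approximation error vanishes, and hence may be run verbatim along $(n_j)$, giving stability of $\U$ and contradicting the hypothesis. Equivalently, one may argue constructively: fixing the instability constant $\epsilon_0$ of $\U$, choose an escaping continuum solution $\mathbf{v}(t)$ with $d(\mathbf{v}(0),\U)$ small and $d(\mathbf{v}(\tau),\U)\ge\epsilon_0$ at a fixed finite time $\tau$, approximate $\mathbf{v}$ by discrete solutions through Theorem~\ref{thm:2b}, and transport the escape across the displayed limit; the subsequence formulation is precisely what converts this into instability for every large $n$.

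Part (i) I would also contrapose: assuming $\U$ is stable, I must contradict one of the two hypotheses, i.e.\ exhibit either stability of $\U_n$ for infinitely many $n$ or a stable family of solutions to \eqref{eqn:dsys} converging to $\U$. If the former fails, then $\U_n$ is unstable for all large $n$, and by Theorem~\ref{thm:2c}(ii) every solution of \eqref{eqn:dsys} issued near $\U_n$ stays near $\U_n$, hence near $\U$, on each finite window, yet cannot remain trapped by $\U_n$ for all time. Here I would exploit that, since $\omega=0$ and the weights of the complete and nearest-neighbor graphs are symmetric, both \eqref{eqn:dsys} and \eqref{eqn:csys} are gradient systems on the compact torus $\Tset^n$: the $\omega$-limit set of each such trapped trajectory therefore consists of equilibria lying in a small neighborhood of $\U$ but off $\U_n$. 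Among these one extracts a stable family (for instance, local minimizers of the energy within the neighborhood), which converges to $\U$ as $n\to\infty$ by the comparison lemma, producing the second alternative and completing the contraposition. This step adapts the proof of Theorem~2.7 of \cite{Y24} to the two-graph setting with $G_{1n}$ deterministic and complete.

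The crux is this extraction in part (i). Theorem~\ref{thm:2b} and Theorem~\ref{thm:2c}(ii) control solutions only on finite intervals, whereas a genuine stable family converging to $\U$ is an object defined for all $t$, so the finite-time trapping must be promoted to an honest statement about $\omega$-limit sets and their Lyapunov stability, uniformly enough in $n$ that the extracted family is seen to converge to $\U$. The gradient structure is what makes this promotion feasible, by forcing forward orbits to accumulate on equilibria; verifying that a stable equilibrium family sits in every neighborhood of $\U$ and converges to it is where the substantive work lies. By contrast, the comparison lemma and the subsequence device behind part (ii) are routine once this machinery is in place.
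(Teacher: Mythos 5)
Your comparison lemma ($d_n(\mathbf{u}_n(t),\U_n)\to d(\mathbf{u}(t),\U)$ under the standing convergence) is correct, and your treatment of part (ii) --- contraposing a subsequence-strengthened form of Theorem~\ref{thm:2c}(i) rather than relying on the finite-time escape transport, which by itself only yields the weaker ``behaves as if unstable'' statement of Theorem~\ref{thm:2e} --- is sound and consistent with the paper's route; note that the paper gives no self-contained argument here, obtaining the theorem by slightly modifying the proofs of Theorems~2.7 and 2.9 of \cite{Y24}.

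The genuine gap is in part (i), at exactly the point you yourself flag as ``the crux.'' Your extraction of a stable family requires the trajectories of the KM \eqref{eqn:dsys} that start near $\U_n$ to remain in a small neighborhood of $\U$ for \emph{all} $t\ge 0$, so that their $\omega$-limit sets (equilibria, by the gradient structure) lie in that neighborhood. But Theorem~\ref{thm:2c}(ii) controls solutions only on a window $[0,\tau]$, and only for $n$ large \emph{depending on} $(\epsilon,\tau)$: for any fixed $n$, however large, the trajectory is uncontrolled beyond a finite time, and its $\omega$-limit set may lie anywhere on $\Tset^n$. The gradient structure forces accumulation on equilibria but says nothing about where they are; likewise, the minimizer of the energy over a closed neighborhood may sit on the boundary, so ``local minimizers within the neighborhood'' need not exist as interior equilibria, let alone form a family of the symmetric type $\{T_n^l\bar{\mathbf{u}}_n'+\boldsymbol{\theta}\}$ that is stable in the paper's sense and converges to $\U$. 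This finite-time-versus-infinite-time mismatch is precisely the subtlety this section of the paper is built around (Remarks~\ref{rmk:2a} and \ref{rmk:2c}(i): there is an example in \cite{Y24} where $\U$ is asymptotically stable while every $\U_n$ is unstable, i.e., your contrapositive scenario genuinely occurs and the stable family whose existence you must establish really is a nontrivial object sitting off $\U_n$). Promoting the finite-time trapping to a statement about $\omega$-limit sets therefore needs a quantitative input uniform in $n$ --- e.g., energy or {\L}ojasiewicz-type estimates for \eqref{eqn:dsys} --- which neither Theorem~\ref{thm:2b} nor Theorem~\ref{thm:2c} provides. As written, part (i) of your proposal is a plan rather than a proof.
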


\begin{thm}
\label{thm:2e}
If $\U$ is unstable,
 then for any $\epsilon,\delta>0$
 there exists $\tau>0$ such that for $n>0$ sufficiently large
\[
d(\mathbf{u}_n(t),\U)>\epsilon\quad\mbox{a.s.},
\]
where $\mathbf{u}_n(t)$ is a solution to the KM \eqref{eqn:dsys} satisfying
\[
d(\mathbf{u}_n(0),\U)<\delta\quad\mbox{a.s.}
\]
\end{thm}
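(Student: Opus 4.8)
The plan is to run the argument of Corollary~\ref{cor:2a} in reverse, transferring an escaping trajectory of the CL \eqref{eqn:csys} to the KM \eqref{eqn:dsys} by means of the finite-horizon convergence of Theorem~\ref{thm:2b}. First I would unpack the hypothesis that $\U$ is unstable. By the definition of stability given above, instability of $\U$ means that there is a constant $\epsilon_0>0$ with the following property: for every $\delta'>0$ there exist an initial datum $\mathbf{g}$ with $d(\mathbf{g},\U)<\delta'$ and a \emph{finite} time $t_0\ge0$ such that the solution $\bar{\mathbf{v}}(t)$ of \eqref{eqn:csys} with $\bar{\mathbf{v}}(0)=\mathbf{g}$ satisfies $d(\bar{\mathbf{v}}(t_0),\U)\ge\epsilon_0$. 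The finiteness of $t_0$ is exactly what makes Theorem~\ref{thm:2b}, which only controls solutions on compact time intervals, applicable. It suffices to treat $\epsilon\in(0,\epsilon_0)$, since an escape to distance $\epsilon_0$ already places the solution beyond any such $\epsilon$; here $\epsilon_0$ is the intrinsic escape magnitude furnished by instability.

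Fix $\epsilon\in(0,\epsilon_0)$ and $\delta>0$. Applying the instability property with $\delta'=\tfrac12\delta$ yields a CL solution $\bar{\mathbf{v}}(t)$ with $d(\bar{\mathbf{v}}(0),\U)<\tfrac12\delta$ and a finite escape time $\tau>0$ with $d(\bar{\mathbf{v}}(\tau),\U)\ge\epsilon_0$. I would then choose KM initial data $\mathbf{u}_n(0)$ with $\lim_{n\to\infty}\|\mathbf{u}_n(0)-\bar{\mathbf{v}}(0)\|=0$ a.s.\ (for instance the step-function sampling of $\bar{\mathbf{v}}(0)$), so that $d(\mathbf{u}_n(0),\U)<\delta$ a.s.\ for $n$ large. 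By Theorem~\ref{thm:2b} the corresponding KM solution obeys $\max_{t\in[0,\tau]}\|\mathbf{u}_n(t)-\bar{\mathbf{v}}(t)\|\to0$ a.s. Since the functional $\mathbf{w}\mapsto d(\mathbf{w},\U)$ is $1$-Lipschitz on $L^2(I)$, being an infimum of distances to the members of $\U$, the triangle inequality gives
\[
d(\mathbf{u}_n(\tau),\U)\ge d(\bar{\mathbf{v}}(\tau),\U)-\|\mathbf{u}_n(\tau)-\bar{\mathbf{v}}(\tau)\|
\ge\epsilon_0-\|\mathbf{u}_n(\tau)-\bar{\mathbf{v}}(\tau)\|,
\]
which exceeds $\epsilon$ a.s.\ as soon as $n$ is large enough that the last norm falls below $\epsilon_0-\epsilon$. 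This is the desired conclusion, with $\tau$ the escape time produced above and $\mathbf{u}_n$ the witnessing solution.

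The step I expect to require the most care is the conceptual one just highlighted: correctly negating the stability definition to extract a \emph{single} escape magnitude $\epsilon_0$ valid for arbitrarily small $\delta'$ together with a \emph{finite} escape time, so that the finite-horizon convergence of Theorem~\ref{thm:2b}---rather than any uniform-in-$t$ estimate, which is not available---is precisely what closes the gap. The remaining subtlety is the almost-sure bookkeeping, which is inherited wholesale from Theorem~\ref{thm:2b}: the threshold $N$ beyond which $\|\mathbf{u}_n(\tau)-\bar{\mathbf{v}}(\tau)\|<\epsilon_0-\epsilon$ is a.s.\ finite but in general random, so the conclusion should be read as ``almost surely, $d(\mathbf{u}_n(\tau),\U)>\epsilon$ for all sufficiently large $n$,'' matching the a.s.\ formulation of Corollary~\ref{cor:2a}. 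As in Remark~\ref{rmk:2b}(i), no restriction on $G_{1n}$ is needed, since the argument invokes only Theorem~\ref{thm:2b} and compares every object against the CL family $\U$ in the $L^2(I)$ norm.
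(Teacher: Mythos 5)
Your proposal is correct and takes essentially the same route as the paper, which proves Theorem~\ref{thm:2e} by slightly modifying Theorem~2.9 of \cite{Y24}: that argument likewise extracts from the instability of $\U$ a finite-time escaping CL trajectory and transfers it to the KM \eqref{eqn:dsys} via the finite-horizon convergence of Theorem~\ref{thm:2b}, which is why no restriction on $G_{1n}$ is needed (Remark~\ref{rmk:2c}(ii)). Your quantifier bookkeeping---a single escape magnitude $\epsilon_0$ with a finite escape time, the restriction to $\epsilon\in(0,\epsilon_0)$, and the existential reading of the witnessing solution $\mathbf{u}_n$---is exactly the intended interpretation of the statement.
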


\begin{rmk}\
\label{rmk:2c}
\begin{enumerate}
\setlength{\leftskip}{-1.6em}
\item[\rm(i)]
Only under the hypothesis of Theorem~{\rm\ref{thm:2c}},
 $\U$  is not necessarily unstable
  even if $\U_n$ is unstable for $n>0$ sufficiently large.
Moreover, $\U$ may be asymptotically stable
 even if $\U_n$ is unstable for $n>0$ sufficiently large.
Such an example was found in {\rm\cite{Y24}}.
\item[\rm(ii)]
In Theorem~$2.9$ of {\rm\cite{Y24}} only complete simple graphs were treated
 but Theorem~{\rm\ref{thm:2e}} can be proven similarly,
 like Corollary~{\rm\ref{cor:2a}}, as stated in Remark~{\rm\ref{rmk:2b}(i)}.
\item[\rm(iii)]
Theorem~$\ref{thm:2e}$ implies that $\U$ behaves
 as if it is an unstable family of solutions in the KM \eqref{eqn:dsys}.
\end{enumerate}
\end{rmk}

Thus, the relationship between the KM \eqref{eqn:dsys} and CL \eqref{eqn:csys} is subtle.
However, under the hypothesis of Corollary~\ref{cor:2a},
 if $\U$ is asymptotically stable,
 then a solution to the KM \eqref{eqn:dsys} starting in the basin of attraction of $\U$
 stays near $\U$ for $n,t>0$ sufficiently large.
This indicates that the ``asymptotic stability'' of $\U$ is observed
 in numerical simulations although they can be performed
 only for large finite values of $n,t>0$.
We will observe this behavior in Section~5.

% **********************************************************
% Section 3
% **********************************************************

\section{Linear Eigenvalue Problem}

We turn to direct discussion on the KM \eqref{eqn:dsys} and CL \eqref{eqn:csys}.
We begin with the eigenvalue problem
 for the linearized equation of the CL \eqref{eqn:csys} with $\omega=0$.
We first note that by its rotation and translation symmetry,
 if $u(t,x)$ is a solution to the CL \eqref{eqn:csys},
 then so is $u(t,x+\psi)+\theta$ for any $\psi\in I$ and $\theta\in\Sset^1$
 (see the statement just before Theorem~\ref{thm:2c}).

\begin{figure}
\includegraphics[scale=0.56]{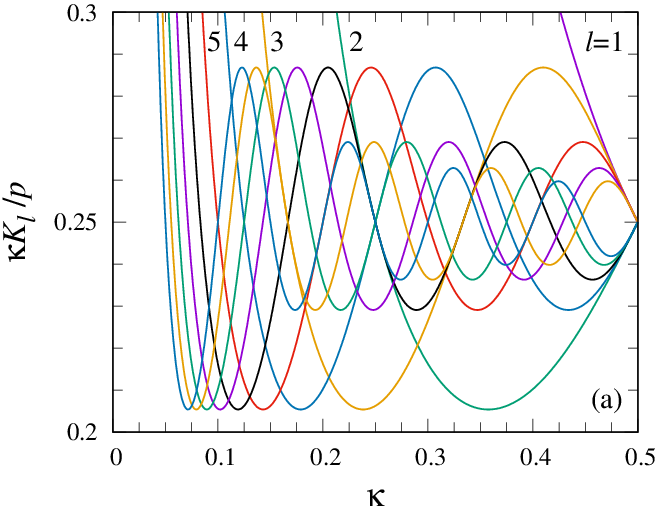}\
\includegraphics[scale=0.56]{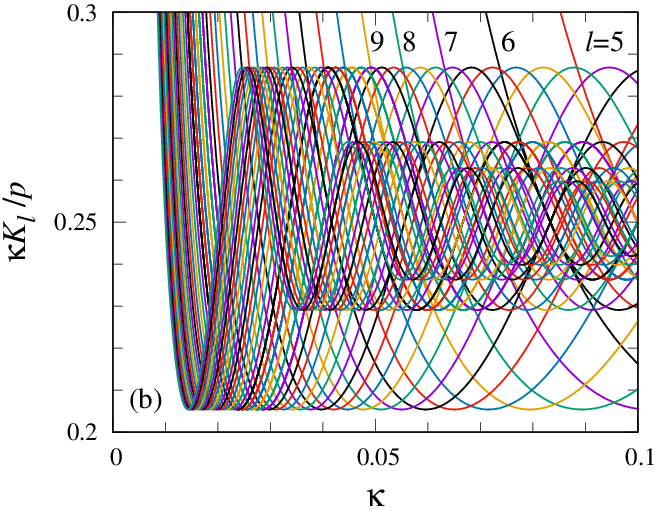}
\caption{Dependence of $K_l$ given by \eqref{eqn:Kl} on $\kappa$:
(a) $l=1$-$10$; (b) $l=5$-$50$.
\label{fig:3a}}
\end{figure}

Following Section~5 of \cite{IY23},
 we discuss the eigenvalue problem
 for the linear operator $\L:L^2(I)\to L^2(I)$ given by
\begin{align}
\L \phi(x)
=& \int_I(p-2K W_2(x,y))(\phi(y)-\phi(x))\d y\notag\\
=& \int_I (p-2K W_2(x,y)) \phi(y)\d y + (4K\kappa-p)\phi(x)
\label{eqn:lep}
\end{align}
for the linearization of \eqref{eqn:csys} around $u(t,x)=0$.
Note that
\begin{equation*}
\int_I W_2(x,y)\d y=2\kappa.
%\label{eqn:int1}
\end{equation*}
We see that $\phi(x)=1$ is an eigenfunction for the zero eigenvalue and
\begin{equation*}
\phi(x)=\cos 2 \pi\ell x,\quad
\sin 2\pi\ell x
%\label{eqn:efex}
\end{equation*}
are eigenfunctions for the eigenvalue
\[
\lambda=4K\kappa-p-\frac{2K}{\pi\ell} \sin 2 \pi\ell\kappa
\]
for each $\ell\in\Nset$ since
\begin{align*}
&
\int_I W_2(x,y) \sin 2 \pi\ell y\,\d y
=\frac{1}{\pi\ell}\sin 2 \pi\ell \kappa\,\sin 2 \pi\ell x,\\
&
\int_I W_2(x,y) \cos 2 \pi\ell y\,\d y
=\frac{1}{\pi\ell}\sin 2 \pi\ell \kappa\,\cos 2 \pi\ell x.
\end{align*}
These eigenvalues are the only ones of $\L$
 since the Fourier expansion of any function in $L^2(I)$ converges to itself a.e.
 by Carleson's theorem \cite{C66}.
Thus, if
\begin{equation}
K=K_\ell:=\frac{\pi\ell p}{2(2\pi\ell\kappa-\sin 2\pi\ell\kappa)}
\label{eqn:Kl}
\end{equation}
for some $\ell\in\Nset$, then the zero eigenvalue of $\L$ is of geometric multiplicity three
 and a bifurcation may occur in the CL \eqref{eqn:csys}.
See Fig.~\ref{fig:3a} for the dependence of $K_\ell$ given by \eqref{eqn:Kl} on $\kappa$
 for several values of $\ell$.
 
Let $\zeta_0$ be the smallest $\zeta>0$ such that the function
\[
\varphi(\zeta)=\frac{\sin\zeta}{\zeta}
\]
takes a local minimum, and let $\varphi_0=-\varphi(\zeta_0)$.
We estimate $\zeta_0=4.4934\ldots$ and $\varphi_0=0.21723\ldots$
We easily prove the following for $K_\ell$.

\begin{prop}\
\label{prop:3a}
\begin{enumerate}
\setlength{\leftskip}{-1.6em}
\item[\rm(i)]
If $\kappa\in(1/2\ell,1/\ell)$, then
\[
K_\ell<\frac{p}{4\kappa}
\]
for $\ell>1$.
In particular, $\min_{\ell\in\Nset}K_\ell<p/4\kappa$ for any $\kappa\in(0,\tfrac{1}{2})$.
\item[\rm(ii)]
$\displaystyle K_\ell\ge\frac{p}{4\kappa(1+\varphi_0)}$ for $\ell>1$,
 where the equality holds at $\kappa=\zeta_0/2\pi\ell$.   
\item[\rm(iii)]
$K_\ell=\tfrac{1}{2}p$ for $\ell\in\Nset$ when $\kappa=\tfrac{1}{2}$.
\end{enumerate}
\end{prop}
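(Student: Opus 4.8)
The plan is to reduce all three parts to elementary properties of the function $\varphi(\zeta)=\sin\zeta/\zeta$ via the substitution $\zeta=2\pi\ell\kappa$. Under it one finds $K_\ell=\pi\ell p/\bigl(2(\zeta-\sin\zeta)\bigr)$ and $p/(4\kappa)=\pi\ell p/(2\zeta)$. Since $\zeta-\sin\zeta$ has derivative $1-\cos\zeta\ge 0$ and vanishes at $\zeta=0$, it is positive for $\zeta>0$; hence every comparison of $K_\ell$ with a positive multiple of $p/(4\kappa)$ is equivalent to a comparison of $\zeta-\sin\zeta$ with the corresponding multiple of $\zeta$, i.e.\ to a bound on $-\varphi(\zeta)=-\sin\zeta/\zeta$.

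For part (iii) I would simply note that $\kappa=\tfrac12$ gives $\zeta=\pi\ell$, so $\sin\zeta=0$ and $K_\ell=\pi\ell p/(2\pi\ell)=\tfrac12 p$. For part (i), the interval $\kappa\in(1/2\ell,1/\ell)$ corresponds exactly to $\zeta\in(\pi,2\pi)$, on which $\sin\zeta<0$; thus $\zeta-\sin\zeta>\zeta$ and $K_\ell<p/(4\kappa)$, the restriction $\ell>1$ being precisely what places $(1/2\ell,1/\ell)$ inside the admissible range $(0,\tfrac12)$. For the second assertion of (i), given any $\kappa\in(0,\tfrac12)$ I would observe that the open interval $(1/2\kappa,1/\kappa)$ has length $1/2\kappa>1$ and therefore contains an integer $\ell$; its left endpoint exceeds $1$, forcing $\ell\ge 2$. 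For that $\ell$ one has $\kappa\in(1/2\ell,1/\ell)$, so the first part yields $K_\ell<p/(4\kappa)$ and hence $\min_{\ell\in\Nset}K_\ell<p/(4\kappa)$.

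Part (ii) is the crux. Clearing denominators, $K_\ell\ge p/\bigl(4\kappa(1+\varphi_0)\bigr)$ is equivalent to $\zeta-\sin\zeta\le\zeta(1+\varphi_0)$, i.e.\ to $-\varphi(\zeta)\le\varphi_0$; since $\varphi_0=-\varphi(\zeta_0)$ by definition, this is exactly the assertion that $\varphi(\zeta_0)$ is the global minimum of $\varphi$ on $(0,\infty)$, attained only at $\zeta_0$. Establishing this global minimality is the main obstacle. The plan is to use that the critical points of $\varphi$ solve $\tan\zeta=\zeta$, and that at such a point the identity $\sin\zeta=\zeta\cos\zeta$ together with $\sin^2\zeta+\cos^2\zeta=1$ gives $|\varphi(\zeta)|=1/\sqrt{1+\zeta^2}$, which is strictly decreasing in $\zeta$. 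Consequently, among the negative local minima of $\varphi$ (which occur at successively larger critical points in the intervals where $\sin\zeta<0$) the first one, $\zeta_0\in(\pi,\tfrac32\pi)$, produces the most negative value; combined with $\varphi>0$ on $(0,\pi)$ and $\varphi\to0$ as $\zeta\to\infty$, this shows $\varphi(\zeta_0)=\min_{\zeta>0}\varphi(\zeta)$. Finally, since $\zeta_0\approx4.4934<2\pi\le\pi\ell$ for $\ell\ge 2$, the value $\zeta_0$ lies in the range $(0,\pi\ell)$ of $\zeta$, so equality is realized at $\kappa=\zeta_0/2\pi\ell$, completing (ii).
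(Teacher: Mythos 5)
Your proposal is correct, and for parts (i) and (iii) it coincides with the paper's argument: both rewrite \eqref{eqn:Kl} as $p/(4K_\ell\kappa)=1-\varphi(2\pi\ell\kappa)$ and read off (iii) from $\varphi(\pi\ell)=0$ and (i) from $\varphi<0$ on $(\pi,2\pi)$ together with the covering $(0,\tfrac12)=\bigcup_{\ell\ge2}(1/2\ell,1/\ell)$, which you prove directly by the length-of-interval argument. Where you genuinely diverge is part (ii), i.e.\ the proof that $-\varphi_0$ is the global minimum of $\varphi$ on $(0,\infty)$. The paper does this crudely: it takes $\varphi\ge\varphi(\zeta_0)=-\varphi_0$ on $(0,3\pi]$ from the definition of $\zeta_0$, and on each tail interval $((2j+1)\pi,(2j+3)\pi]$, $j\in\Nset$, uses only $|\sin\zeta|\le1$ to get $\varphi(\zeta)>-1/((2j+1)\pi)>-\varphi_0$. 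You instead use the classical identity at critical points: $\tan\zeta=\zeta$ forces $|\varphi(\zeta)|=1/\sqrt{1+\zeta^2}$, which is strictly decreasing, so the first negative local minimum $\zeta_0\in(\pi,\tfrac32\pi)$ is the deepest one; combined with positivity on $(0,\pi)$ and decay at infinity (which justifies that the infimum is attained at a critical point), this gives global minimality. Both routes are elementary and sound; the paper's is shorter and avoids any discussion of critical points, while yours yields a sharper structural fact (the exact magnitudes of all local extrema) and, unlike the paper, makes explicit the check that equality in (ii) is attainable within the admissible range, namely $\kappa=\zeta_0/2\pi\ell<\tfrac12$ precisely because $\zeta_0<\pi\ell$ for $\ell\ge2$.
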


\begin{proof}
We rewrite \eqref{eqn:Kl} as
\begin{equation}
\frac{p}{4K_\ell\kappa}=1-\varphi(2\pi\ell\kappa).%\frac{\sin 2\pi\ell\kappa}{2\pi\ell\kappa}>1
\label{eqn:prop3a}
\end{equation}
We easily see that if $\kappa\in(1/2\ell,1/\ell)$,
 then $2\pi\ell\kappa\in(\pi,2\pi)$, so that the right-hand side of \eqref{eqn:prop3a}
 is larger than one.
Noting that
\[
(0,\tfrac{1}{2})=\bigcup_{\ell=2}^\infty(1/2\ell,1/\ell),
\]
we obtain part~(i).
On the other hand,
 we have $\varphi(\zeta)\ge\varphi(\zeta_0)=-\varphi_0$ on $(0,3\pi]$ and\[
\varphi(\zeta)>-\frac{1}{(2j+1)\pi}>-\varphi_0
\]
on $((2j+1)\pi,(2j+3)\pi]$, $j\in\Nset$.
This yields part~(ii) by \eqref{eqn:prop3a}.
Part~(iii) also immediately follows from \eqref{eqn:prop3a}
 since $\varphi(\pi\ell)=0$.
\end{proof}

We notice that $4K_1\kappa>p$ for $\kappa\in(0,\tfrac{1}{2})$ since
\[
\frac{p}{4K_1\kappa}=1-\frac{\sin 2\pi\kappa}{2\pi\kappa}<1.
\]
Thus, $K_1$ cannot be the minimum of $K_j$, $j\in\Nset$, as seen in Fig.~\ref{fig:3a}.

On the other hand, we obtain
\begin{align}
\langle \L \phi, \phi \rangle
=&
\int_{I^2}(p-2K W_2(x,y))(\phi(y)-\phi(x))\phi(x)\d x\d y\notag\\
=&\tfrac{1}{2} \int_{I^2} (2KW_2(x,y)-p) (\phi(x)-\phi(y))^2\d x\d y,
\label{eqn:prop3b}
\end{align}
where $\langle\cdot,\cdot\rangle$ represents the inner product in $L^2(I)$.
Here we have used the relation
\[
\int_I(p-2K W_2(x,y))\phi(x)^2\d x\d y
 =\int_I(p-2K W_2(x,y))\phi(y)^2\d x\d y,
\]
which follows from the symmetry of $W_2(x,y)$.
Noting that if $K\leq 0$ and $\phi(x)$ is not a constant function,
 then Eq.~\eqref{eqn:prop3b} is negative, we obtain the following.

\begin{prop}
\label{prop:3b}
If $K\leq 0$, then the stationary solution $u(t,x)=0$ is linearly stable
 and the one-parameter family of stationary solutions
 $\U_0=\{u=\theta\mid\theta\in\Sset^1\}$ is asymptotically stable
  in the CL \eqref{eqn:csys}.
\end{prop}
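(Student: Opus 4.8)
The plan is to read linear stability directly off the quadratic form \eqref{eqn:prop3b} and then upgrade it to asymptotic stability of $\U_0$ by combining the resulting spectral gap of $\L$ with the conservation of the mean phase, thereby removing the neutral symmetry direction. First I would note that $\L$ is a bounded self-adjoint operator on $L^2(I)$, its symmetry being inherited from that of $W_2$. When $K\le 0$ (and $p>0$) the weight satisfies $2KW_2(x,y)-p\le-p<0$, so \eqref{eqn:prop3b} gives
\[
\langle\L\phi,\phi\rangle
=\tfrac12\int_{I^2}(2KW_2(x,y)-p)(\phi(x)-\phi(y))^2\,\d x\,\d y
\le-\tfrac{p}{2}\int_{I^2}(\phi(x)-\phi(y))^2\,\d x\,\d y\le0,
\]
with equality only when $\phi$ is constant a.e. Hence $\L$ is negative semidefinite and $\ker\L$ is exactly the one-dimensional space of constants, which is the tangent space to $\U_0$ at $u=0$. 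In particular the linearized flow $\dot v=\L v$ is non-expansive, which is the asserted linear stability of $u=0$.

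Next I would localize the gap. A short computation shows $\int_I\L\phi(x)\,\d x=0$ for every $\phi$ (the uniform part vanishes since $\int_{I^2}(\phi(y)-\phi(x))\,\d x\,\d y=0$, and the $W_2$-part vanishes by the symmetry of $W_2$), so $\L$ maps all of $L^2(I)$ into the mean-zero subspace $X_0=\{\phi\in L^2(I):\int_I\phi\,\d x=0\}$ and leaves $X_0$ invariant. On $X_0$ one has $\int_{I^2}(\phi(x)-\phi(y))^2\,\d x\,\d y=2\|\phi\|^2$, so the displayed inequality sharpens to $\langle\L\phi,\phi\rangle\le-p\|\phi\|^2$. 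Thus $\sigma(\L)=\{0\}\cup\sigma(\L|_{X_0})$ with $\sigma(\L|_{X_0})\subset(-\infty,-p]$ and $\|\e^{t\L}\phi\|\le\e^{-pt}\|\phi\|$ for $\phi\in X_0$; the same conclusion can be cross-checked from the explicit eigenvalues, since $K\le0$ forces every $\lambda_\ell<0$ and $\lambda_\ell\to4K\kappa-p\le-p$, so the nonzero spectrum stays bounded away from $0$.

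Finally, to reach asymptotic stability of the whole family $\U_0$ I would quotient out the neutral direction by a conserved quantity: integrating \eqref{eqn:csys} with $\omega=0$ over $x\in I$ and using $\sin(u(y)-u(x))=-\sin(u(x)-u(y))$ together with the symmetry of $W_2$ shows the mean phase $m(t)=\int_I u(t,x)\,\d x$ is constant in $t$. Writing $u=m(0)+v$ then confines $v$ to $X_0$ for all $t$, and on this invariant slice the unique nearby equilibrium is the constant $m(0)\in\U_0$, with linearization $\L|_{X_0}$ whose spectrum lies in $(-\infty,-p]$. The principle of linearized stability—implemented by a Gronwall estimate against $\|\e^{t\L}\|_{X_0}\le\e^{-pt}$, or equivalently through the gradient (Lyapunov) structure $\partial_t u=-\nabla E[u]$ of \eqref{eqn:csys}—then forces $v(t)\to0$ exponentially, i.e.\ $u(t,\cdot)\to m(0)\in\U_0$, which is the asymptotic stability of $\U_0$.

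The main obstacle is not the spectral estimate, which \eqref{eqn:prop3b} hands over almost for free, but the nonlinear step. Because the rotation symmetry always pins $0$ in $\sigma(\L)$, no naive exponential-decay argument applies until the symmetry is removed, which is precisely why the conserved mean (or a center-manifold reduction in which the center manifold coincides with the equilibrium family $\U_0$, cf.\ \cite{HI11}) is essential. A subsidiary technical point is to control the higher-order Nemytskii-type remainder $N(v)=F[m(0)+v]-\L v$ uniformly near $\U_0$ in the working function space; here the smoothing provided by the $y$-integration, or the Lyapunov functional $E$, is what lets the estimate close.
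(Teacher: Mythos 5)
Your linear argument coincides with the paper's entire proof: the paper simply observes that the symmetrized quadratic form \eqref{eqn:prop3b} is strictly negative for non-constant $\phi$ when $K\le 0$ and reads the proposition off from that. Your sharpenings go beyond what the paper writes and are correct as computations: $\L$ maps $L^2(I)$ into the mean-zero subspace $X_0$ and leaves it invariant, $\langle\L\phi,\phi\rangle\le -p\|\phi\|^2$ on $X_0$, the mean phase $\int_I u(t,x)\,\d x$ is conserved by \eqref{eqn:csys} with $\omega=0$, and the cross-check against the explicit eigenvalues ($\lambda_\ell=2K(2\kappa-\delta_\ell)-p\le -p$ since $2\kappa-\delta_\ell>0$) is right. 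Up to this point you have reproduced, and in fact tightened, everything the paper actually establishes.

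The genuine gap is in your nonlinear closing step, and it is not the ``subsidiary technical point'' you label it. In $L^2(I)$ the remainder $N(v)=F(m(0)+v)-\L v$ is \emph{not} $o(\|v\|)$: taking spike-like perturbations such as $v=\pi\mathbf{1}_E-\pi|E|$ with $|E|$ small, one gets $\|\sin v - v\|_{L^2}\sim\|v\|_{L^2}$, so the vector field is not Fr\'echet differentiable in $L^2$ and the Gronwall comparison against $\e^{-pt}$ cannot be run. Worse, your assertion that the constant $m(0)$ is the \emph{unique} equilibrium near $\U_0$ on the invariant mean slice is false in $L^2$: the antipodal state $u=\pi\mathbf{1}_E$ is a stationary solution of \eqref{eqn:csys} for every $K$ (both sine terms vanish identically on it), has the same mean as the constant $\pi|E|$, lies at $L^2$-distance $\pi\sqrt{|E|(1-|E|)}$ from $\U_0$, and never approaches $\U_0$. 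Hence no linearized-stability or gradient-flow argument carried out purely in the $L^2$ topology can deliver convergence of \emph{all} $L^2$-nearby solutions; the smoothing of the $y$-integration controls only the scalar factors $\int\sin u\,\d y$ etc., not the diagonal Nemytskii factors $\sin v(x)$, $\cos v(x)$. Closing the step requires either a finer topology (e.g.\ an $L^\infty$-type neighborhood, where $\Delta\sin\Delta\ge c\Delta^2$ pointwise and the cubic remainder estimate is genuine) or a weakened notion of attraction. In fairness, the paper's own proof stops at the linear computation and never confronts this issue either, so your proposal is not behind the paper --- but the step you defer is exactly where the difficulty of the nonlinear statement lives, not a routine estimate.
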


% **********************************************************
% Section 4
% **********************************************************

\section{Bifurcations}

We now take the coupling constant $K$ for the second mode interaction as a control parameter,
 and analyze bifurcations in the CL \eqref{eqn:csys} with $\omega=0$.
It follows from the analysis of Section~3 that
 they occur at $K=K_\ell$, $\ell\in\Nset$.
However, the zero eigenvalue of the related linear operator $\L$
 is of geometric multiplicity three and very degenerate at $K=K_\ell$,
 due to the rotation and translation symmetry,
 as stated at the beginning of Section~3,
 so that some special treatments are required. 
So we first +only consider solutions $u(t,x)$ in the CL \eqref{eqn:csys} such that
\begin{equation}
\int_I u(t,x)\d x=0
\label{eqn:con}
\end{equation}
since $u(t,x)+\theta$ is also a solution for any $\theta\in\Sset^1$.
This reduces the degeneracy of multiplicity three  by one.

Consider the case of $K\approx K_\ell$ for some $\ell\in\Nset$
 and introduce a parameter
\begin{equation}
\mu=2(2\kappa-\delta_\ell)(K-K_\ell)\approx 0,
\label{eqn:mu}
\end{equation}
where
\[
\delta_j=\frac{1}{\pi j}\sin 2\pi j\kappa,\quad
j\in\Nset.
\]
Let
\begin{equation}
u(t,x)=\sum_{j=1}^\infty(\xi_j(t)\cos 2\pi jx+\eta_j(t)\sin 2\pi jx),
\label{eqn:solex}
\end{equation}
which satisfies \eqref{eqn:con}.
We substitute \eqref{eqn:solex} into \eqref{eqn:csys},
 and integrate the resulting equation from $x=0$ to $1$
 after multiplying it with $\cos 2\pi jx$ or $\sin 2\pi jx$, $j\in\Nset$, 
 to obtain
\begin{equation}
\begin{split}
&
\dot{\xi}_\ell=\mu\xi_\ell-\beta(\xi_\ell^2+\eta_\ell^2)\xi_\ell+\cdots,\quad
\dot{\eta}_\ell=\mu\eta_\ell-\beta(\xi_\ell^2+\eta_\ell^2)\eta_\ell+\cdots,\\
&
\dot{\xi}_j=-\beta_j\xi_j+\cdots,\quad
\dot{\eta}_j=-\beta_j\eta_j+\cdots,\quad
j\neq\ell,\\
&
\dot{\mu}=0,
\end{split}
\label{eqn:ifex}
\end{equation}
where `$\cdots$' represents higher-order terms of
\[
O\left(|\xi_\ell|^5+|\eta_\ell|^5+\sum_{j=1,j\neq \ell}^\infty(|\xi_j|^3+|\eta_j|^3)+\mu^2\right)
\]
for the first two equations and
\[
O\left(\sum_{j=1}^\infty(|\xi_j|^3+|\eta_j|^3)+\mu^2\right)
\]
for the third and fourth equations,
\begin{equation}
\beta=\tfrac{13}{8}p-K_\ell(\kappa-\delta_{2\ell})
\label{eqn:beta}
\end{equation}
and
\begin{align*}
&
\beta_j=p-2K_\ell(2\kappa-\delta_j)
=2K_\ell(\delta_j-\delta_\ell),\quad
j\in\Nset\setminus\{\ell\}.
\end{align*}
Here we have used the relation $p=2K_\ell(2\kappa-\delta_\ell)$.
See Appendix~A for the derivation of \eqref{eqn:ifex}.

\begin{figure}
\includegraphics[scale=0.6]{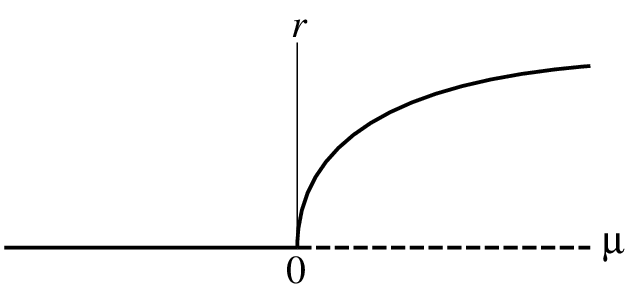}
\caption{Bifurcation diagram for \eqref{eqn:r}.
The solid and broken lines represent stable and unstable equilibria, respectively.
\label{fig:4a}}
\end{figure}

Suppose that $\beta_j\neq 0$ for any $j\neq\ell$.
Then the origin in the infinite-dimensional system \eqref{eqn:ifex} is an equilibrium
 which has a three-dimensional center manifold given by
\begin{equation}
W^\c
=\{(\xi_i,\eta_i), i\in\Nset\mid
 (\xi_j,\eta_j)=O(|\xi_\ell|^3+|\eta_\ell|^3+\mu^2),j\neq\ell\}.
\label{eqn:Wc}
\end{equation}
By the center manifold reduction \cite{HI11},
 the system \eqref{eqn:ifex} is reduced to
\begin{equation}
\begin{split}
&
\dot{\xi}_\ell=\mu\xi_\ell-\beta(\xi_\ell^2+\eta_\ell^2)\xi_\ell+\cdots,\quad
\dot{\eta}_\ell=\mu\eta_\ell-\beta(\xi_\ell^2+\eta_\ell^2)\eta_\ell+\cdots,\\
&
\dot{\mu}=0
\end{split}
\label{eqn:cmex}
\end{equation}
on the center manifold, where `$\cdots$' represents higher-order terms of
\[
O\left(|\xi_\ell|^5+|\eta_\ell|^5+\mu^2\right).
\]
See Appendix~B for the validity of application of the theory in our setting.
We easily see that
 the origin $(\xi_\ell,\eta_\ell)=(0,0)$ is always an equilibrium in \eqref{eqn:cmex}.
Letting $r=\sqrt{\xi_\ell^2+\eta_\ell^2}\ge 0$,
 we rewrite \eqref{eqn:cmex}.as
\begin{equation}
\dot{r}=\mu r-\beta r^3+O(r^5+\mu^2),\quad
\dot{\mu}=0.
\label{eqn:r}
\end{equation}
Here by the translation symmetry,
 the first equation of \eqref{eqn:r} must depend only on $r$ and $\mu$
 even if the higher-order terms are included,
 since the symmetry would be broken otherwise.
We easily show the following for \eqref{eqn:r} without the higher-order terms
 when $\beta>0$:
\begin{enumerate}
\setlength{\leftskip}{-1.8em}
\item[(i)]
If $\mu<0$, then the equilibrium $r=0$ is stable;
\item[(ii)]
If $\mu>0$, then the equilibrium $r=0$ is unstable
 and there exists another stable equilibrium at
\[
r=\sqrt{\frac{\mu}{\beta}}.
\]
\end{enumerate}
See Fig.~\ref{fig:4a} for the bifurcation diagram for \eqref{eqn:r}.
From this result, we obtain the following for the CL \eqref{eqn:csys}
 when the restriction \eqref{eqn:con} is removed.

\begin{thm}
\label{thm:4a}
Fix the value of $\kappa$
 and suppose that $K_\ell$ is the unique minimum of $K_j$, $j\in\Nset$.
Then a bifurcation of the one-parameter family of stationary solutions
 \[
 \U^0=\{u=\theta\mid\theta\in\Sset^1\}
 \]
occurs at $K=K_\ell$ in the CL \eqref{eqn:csys} as follows$:$
\begin{enumerate}
\setlength{\leftskip}{-1.8em}
\item[(i)]
When $K<K_\ell$ near $K=K_\ell$, the family $\U^0$ of stationary solutions
   is stable$;$
\item[(ii)]
When $K>K_\ell$ near $K=K_\ell$, $\U^0$ is unstable
 and  there exists another stable two-parameter family of stationary solutions
\[
\U^\ell=\left\{u=\sqrt{\frac{\mu}{\beta}}\sin(2\pi\ell x+\psi)+\theta+O(\mu)
 \mid\psi,\theta\in\Sset^1\right\}.
\]
\end{enumerate}
Here $\mu=O(K-K_\ell)$ and $\beta=O(1)$
 are given in \eqref{eqn:mu} and \eqref{eqn:beta}, respectively,
 and $\beta>0$.
\end{thm}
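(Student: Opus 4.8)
The plan is to assemble the linear spectral information from Section~3 with the reduced dynamics \eqref{eqn:r} obtained by the center manifold reduction, and then to translate the reduced pitchfork picture back into statements about the two families $\U^0$ and $\U^\ell$ in $L^2(I)$. First I would record that the eigenvalue of $\L$ attached to the mode-$\ell$ eigenfunctions $\cos 2\pi\ell x$, $\sin 2\pi\ell x$ equals $2K(2\kappa-\delta_\ell)-p$, which by the relation $p=2K_\ell(2\kappa-\delta_\ell)$ is exactly $\mu=2(2\kappa-\delta_\ell)(K-K_\ell)$ of \eqref{eqn:mu}; since $2\kappa-\delta_\ell>0$, this gives $\mu>0\iff K>K_\ell$ and $\mu<0\iff K<K_\ell$. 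Next I would use the hypothesis that $K_\ell$ is the \emph{unique} minimum of $K_j$: as $K_j=p/\bigl(2(2\kappa-\delta_j)\bigr)$ is strictly increasing in $\delta_j$, this is equivalent to $\delta_\ell$ being the unique minimum of $\{\delta_j\}$, whence $\beta_j=2K_\ell(\delta_j-\delta_\ell)>0$ for every $j\neq\ell$. This is precisely the nondegeneracy needed in \eqref{eqn:ifex}--\eqref{eqn:Wc} for the three-dimensional center manifold to exist and, more importantly, to be exponentially attracting, so that the reduction to \eqref{eqn:cmex}/\eqref{eqn:r} is justified (Appendix~B) and no mode $j\neq\ell$ can destabilize the bifurcating states.

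With $\mu$ identified, the stability dichotomy (i)--(ii) for \eqref{eqn:r} transfers directly. For $K<K_\ell$ we have $\mu<0$, and since $K<K_\ell\le K_j$ makes $2(2\kappa-\delta_j)(K-K_j)<0$ the eigenvalue of every mode $j\in\Nset$, all non-constant Fourier modes decay and $\U^0$ is stable. For $K>K_\ell$ near $K_\ell$ we have $\mu>0$, so $r=0$ is unstable (hence $\U^0$ is unstable) while the nontrivial equilibrium $r=\sqrt{\mu/\beta}$ is stable in the reduced flow. Solving $\mu r-\beta r^3+O(r^5+\mu^2)=0$ gives $r=\sqrt{\mu/\beta}+O(\mu)$, and on the center manifold the corresponding profile is $\xi_\ell\cos2\pi\ell x+\eta_\ell\sin2\pi\ell x$ plus higher modes of size $O(r^3)=O(\mu^{3/2})=o(\mu)$; writing $(\xi_\ell,\eta_\ell)$ in polar form yields $u=\sqrt{\mu/\beta}\,\sin(2\pi\ell x+\psi)+O(\mu)$, and restoring the mean value removed by \eqref{eqn:con} reintroduces the additive constant $\theta$. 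This produces exactly the two-parameter family $\U^\ell$.

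A short but essential computation establishes $\beta>0$. Substituting $p=2K_\ell(2\kappa-\delta_\ell)$ into \eqref{eqn:beta} gives $\beta=K_\ell\bigl(\tfrac{11}{2}\kappa-\tfrac{13}{4}\delta_\ell+\delta_{2\ell}\bigr)$. Writing $-\tfrac{13}{4}\delta_\ell+\delta_{2\ell}=-\tfrac{9}{4}\delta_\ell+(\delta_{2\ell}-\delta_\ell)$ and using $\delta_{2\ell}\ge\delta_\ell$ (minimality of $\delta_\ell$) together with $\delta_\ell=2\kappa\,\varphi(2\pi\ell\kappa)<2\kappa$ (since $\varphi<1$), I obtain $\tfrac{11}{2}\kappa-\tfrac{13}{4}\delta_\ell+\delta_{2\ell}>\tfrac{11}{2}\kappa-\tfrac{9}{2}\kappa=\kappa>0$, so $\beta>K_\ell\kappa>0$; in particular the pitchfork is supercritical and the bifurcating branch is stable as claimed.

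The main obstacle I anticipate is not any single computation but the rigorous transfer of the reduced, finite-dimensional stability conclusions to orbital stability of the families $\U^0$ and $\U^\ell$ in the $L^2$-topology. Two continuous symmetries are present — the translation $x\mapsto x+\psi$ and the rotation $u\mapsto u+\theta$ — and each contributes a neutral direction, so the bifurcating object is a group orbit (a two-torus of equilibria) rather than an isolated point; the reduced system \eqref{eqn:cmex} is equivariant under the $\Sset^1$-action rotating $(\xi_\ell,\eta_\ell)$, which is why \eqref{eqn:r} closes in $r$ alone. I would handle this by combining the exponential attractivity of the center manifold (guaranteed by $\beta_j>0$, $j\neq\ell$, via \cite{HI11}) with the orbital stability of the circle $r=\sqrt{\mu/\beta}$ inside the reduced flow, and then invoking Remark~\ref{rmk:2a} so that solutions differing only on a null set are not distinguished; care is needed to phrase ``stability'' of $\U^0$ and $\U^\ell$ modulo these neutral group directions rather than as asymptotic stability of individual solutions.
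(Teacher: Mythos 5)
Your proposal is correct and follows essentially the same route as the paper: the substantive work is the center--manifold reduction to \eqref{eqn:r} already set up in Section~4 and Appendices~A--B, after which the proof reduces to the sign checks $\beta_j>0$ for $j\neq\ell$ (your argument via monotonicity of $K_j$ in $\delta_j$ is algebraically equivalent to the paper's $\beta_j=4\kappa(K_j-K_\ell)\bigl(1-\varphi(2\pi j\kappa)\bigr)>0$) and $\beta>0$, together with the symmetry bookkeeping that turns the pitchfork for $r$ into the two-parameter family $\U^\ell$. The only deviation is cosmetic: for $\beta>0$ the paper invokes Proposition~\ref{prop:3a}(i) (i.e.\ $4K_\ell\kappa<p$), whereas you use $\delta_{2\ell}>\delta_\ell$ and $\delta_\ell<2\kappa$; both yield the same conclusion (and your bound even sidesteps the sign of $\delta_{2\ell}$, which the paper's displayed inequality implicitly treats as nonnegative).
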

 
\begin{proof}
It remains to show that $\beta>0$ and $\beta_j>0$ for any $j\neq\ell$,
 but they immediately follow since by Proposition~\ref{prop:3a}(i)
\[
\beta>\tfrac{1}{8}(11p+2(p-4K\kappa_\ell))>0
\]
and
\[
\beta_j=4\kappa(K_j-K_\ell)\left(1-\frac{\sin 2\pi j\kappa}{2\pi j\kappa}\right)>0
\]
for any $j\neq\ell$.
\end{proof}
 
\begin{rmk}\
\label{rmk:4a}
\begin{enumerate}
\setlength{\leftskip}{-1.6em}
\item[\rm(i)]
It follows from Proposition~{\rm\ref{prop:3a}(ii)}
 that the hypothesis of Theorem~$\ref{thm:4a}$ holds,
 i.e., $K_\ell$ is the unique minimum of $K_j$, $j\in\Nset$,
 if $\kappa$ is close to $\zeta_0/ 2\pi\ell$ for some integer $\ell>1$.
\item[\rm(ii)]
For each $\ell\in\Nset$,
 a bifurcation similar to one detected in Theorem~$\ref{thm:4a}$ also occurs
 at $K=K_\ell$ even if $K_\ell$ is not the unique minimum of $K_j$, $j\in\Nset$,
 but $K_\ell\neq K_j$ for any $j\neq\ell$,
 although the two-parameter family of $\ell$-humped solutions born there is unstable.
\item[\rm(iii)]
From  Corollary~{\rm\ref{cor:2a}} and Theorems~$\ref{thm:2e}$ and $\ref{thm:4a}$
 we see that $\U^0$ and $\U^\ell$ behave
 as if they are asymptotically stable or unstable families of solutions in the KM~\eqref{eqn:dsys}
 near $K=K_\ell$ for $n>0$ sufficiently large.
Thus, the KM \eqref{eqn:dsys} suffers a ``bifurcation''
 similar to one detected in Theorem~$\ref{thm:4a}$ for the CL \eqref{eqn:csys}.
\end{enumerate}
\end{rmk}

% **********************************************************
% Section 5
% **********************************************************

\section{Numerical Simulations}

\begin{figure}[t]
\begin{minipage}[t]{0.495\textwidth}
\begin{center}
\includegraphics[scale=0.3]{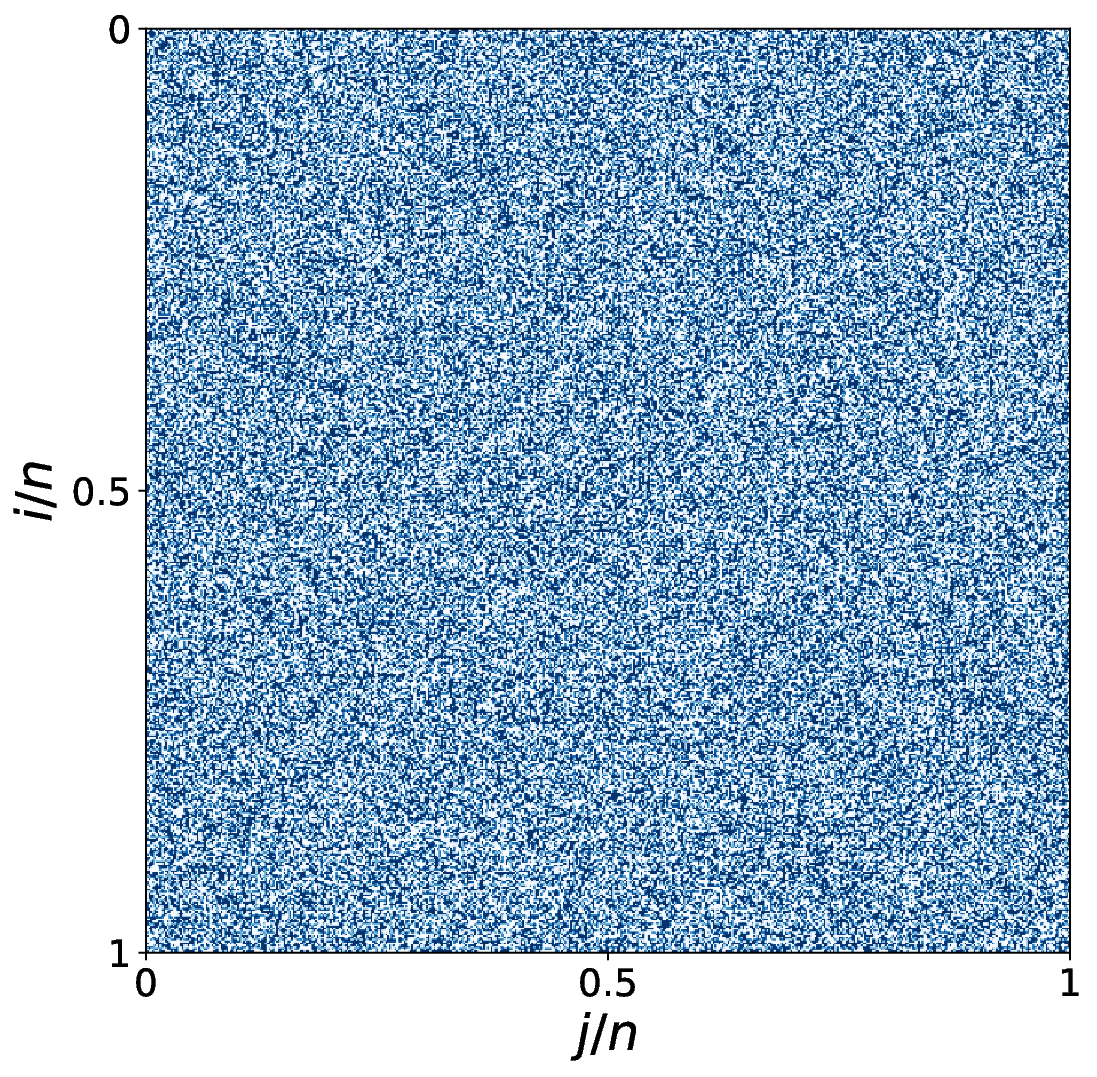}\\
{\footnotesize(a)}
\end{center}
\end{minipage}
\begin{minipage}[t]{0.495\textwidth}
\begin{center}
\includegraphics[scale=0.3]{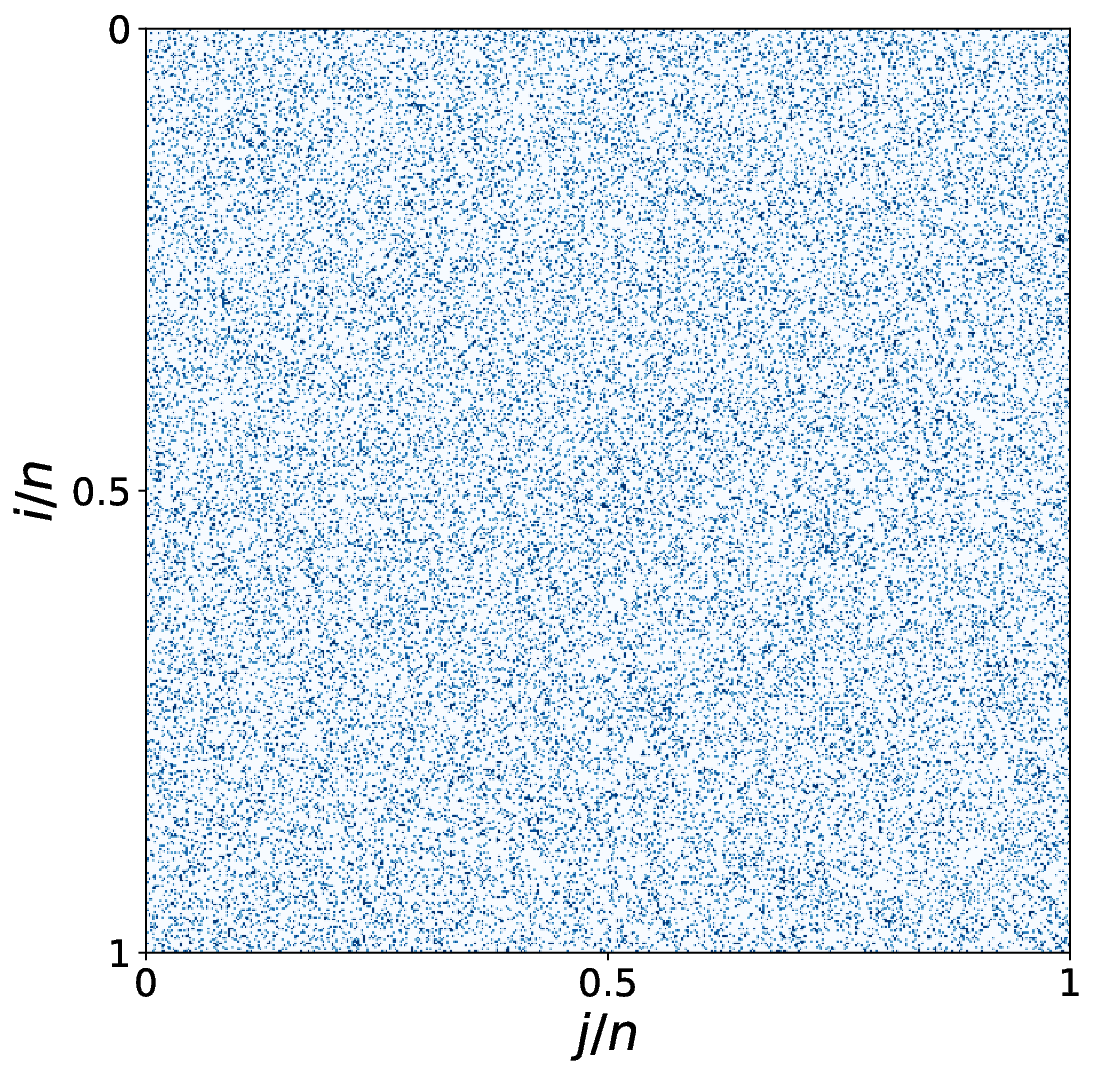}\\
{\footnotesize(b)}
\end{center}
\end{minipage}
\caption{Pixel pictures of sampled weight matrices of the random undirected graphs
 given by  $w_{ij}^n=1$, $i,j\in[n]$, with probability \eqref{eqn:prob1} and  \eqref{eqn:prob2}
 for $n=500$:
(a) Dense graph with $p=0.5$;
(b) sparse graph with $p=1$ and $\gamma=0.3$.
The color of the corresponding pixel is blue if $w_{ij}^n=1$
 and it is light blue otherwise.
\label{fig:5a}}
\end{figure}

Finally, we give numerical simulation results for the KM \eqref{eqn:dsys} with $\omega=0$.
As stated in Section~1 and explained in Section~2,
 the KM \eqref{eqn:dsys} is well approximated by the CL \eqref{eqn:csys},
 when the weight matrices $W(G_{kn})$ and graphons $W_k$, $k=1,2$,
 have the relation \eqref{eqn:ddg}, \eqref{eqn:rdg} or \eqref{eqn:rsg},
 depending on whether the graph $G_{kn}$, $k=1,2$,
 are deterministic dense, random dense or random sparse.

We consider the following three cases for $G_{1n}$:
\begin{enumerate}
\setlength{\leftskip}{-1.8em}
\item[(i)]
Complete simple graph, which is deterministic dense with $p=1$;
\item[(ii)]
Random undirected dense graph in which $w_{ij}^n=1$ with probability
\begin{equation}
\Pset(j\sim i)=p,\quad
i,j\in[n];
\label{eqn:prob1}
\end{equation}
\item[(iii)]
Random undirected space graph in which $w_{ij}^n=1$ with probability
\begin{equation}
\Pset(j\sim i)=n^{-\gamma}p,\quad
i,j\in[n].
\label{eqn:prob2}
\end{equation}
\end{enumerate}
Note that $\alpha_{2n}^{-1}=n^\gamma>p\in(0,1]$ for $\gamma\in(0,\tfrac{1}{2})$.
We specifically take $p=0.5$ in case~(ii) and $p=1$ and $\gamma=0.3$ in case~(iii).
We choose a deterministic $\lfloor n\kappa\rfloor$-nearest neighbor graph as $G_{2n}$:
\begin{equation*}
w_{ij}^2=
\begin{cases}
1 & \mbox{if $|i-j|\leq\kappa n$ or $|i-j|\geq (1-\kappa)n$;}\\
0 & \mbox{otherwise}
\end{cases}
\end{equation*}
for the weight matrix in the KM \eqref{eqn:dsys}, as in Section~5 of \cite{IY23}.
Some numerical simulation results for case~(i) were given in Section~5 of \cite{IY23},
 as stated in Section~1.
Figure~\ref{fig:5a} provides the weight matrices
 for numerically computed samples
 of the random undirected dense and sparse graphs with $n=500$
 and $p=0.5$ or $1$.

\begin{figure}[t]
\begin{minipage}[t]{0.495\textwidth}
\begin{center}
\includegraphics[scale=0.27]{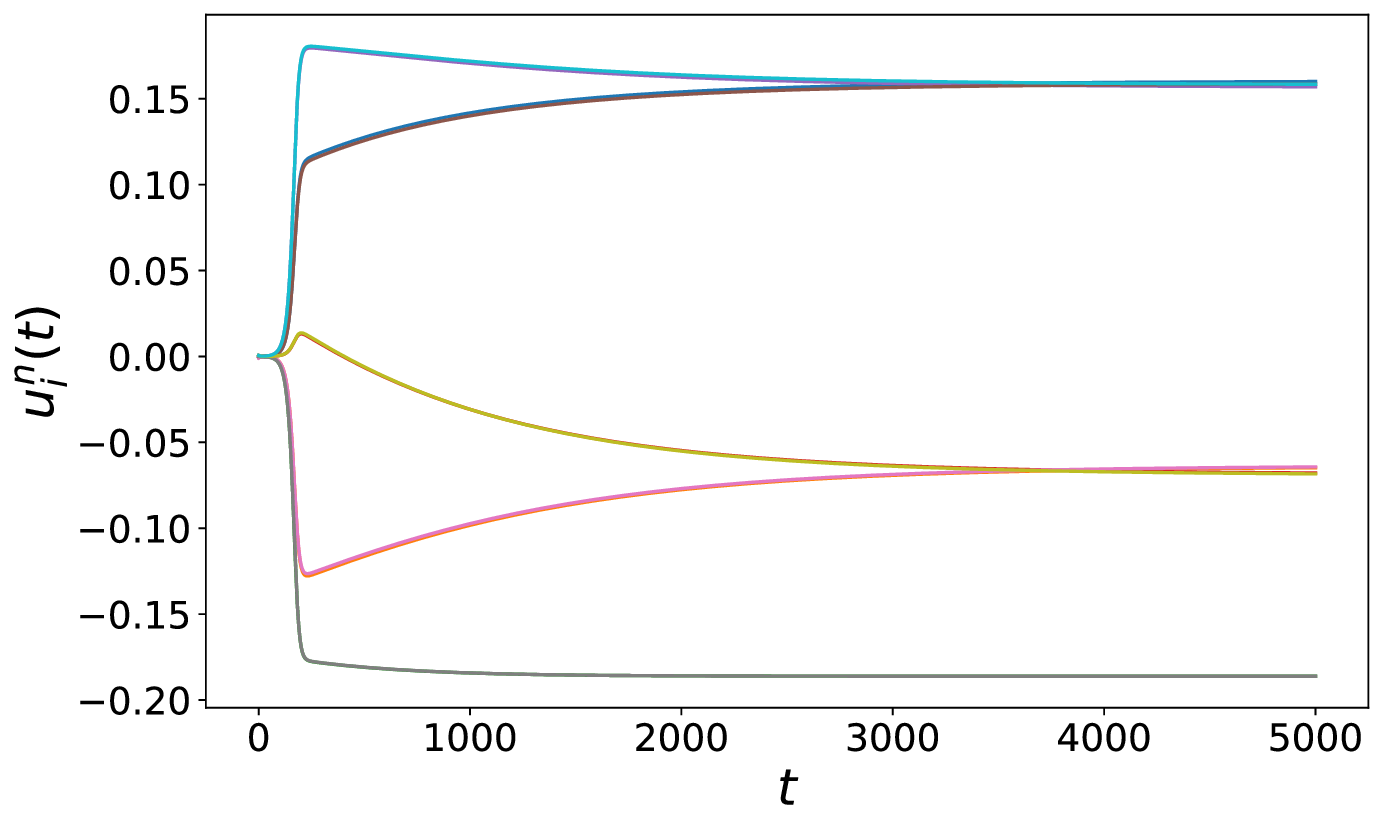}\\[-1ex]
{\footnotesize(a)}
\end{center}
\end{minipage}
\begin{minipage}[t]{0.495\textwidth}
\begin{center}
\includegraphics[scale=0.27]{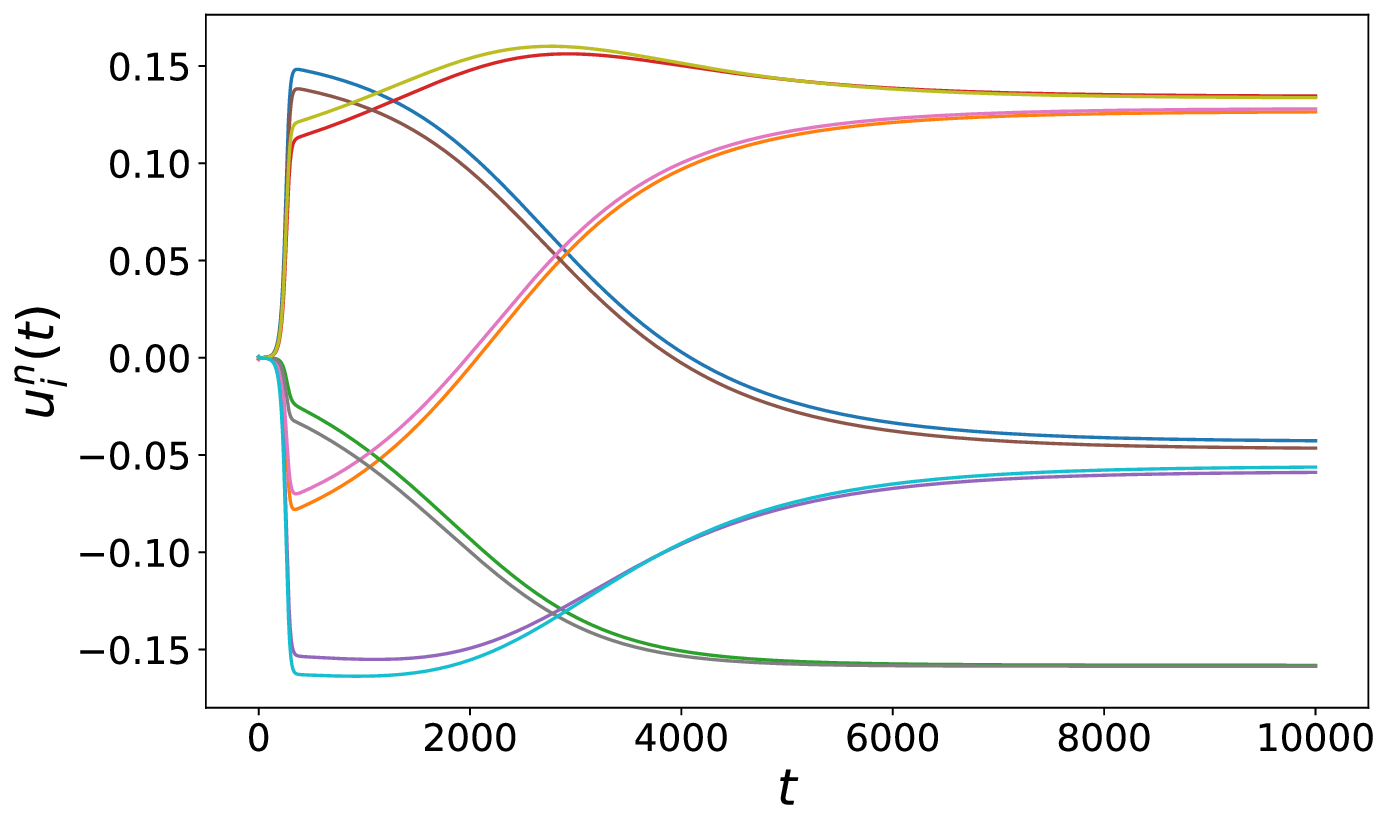}\\[-1ex]
{\footnotesize(b)}
\end{center}
\end{minipage}
\begin{center}
\includegraphics[scale=0.27]{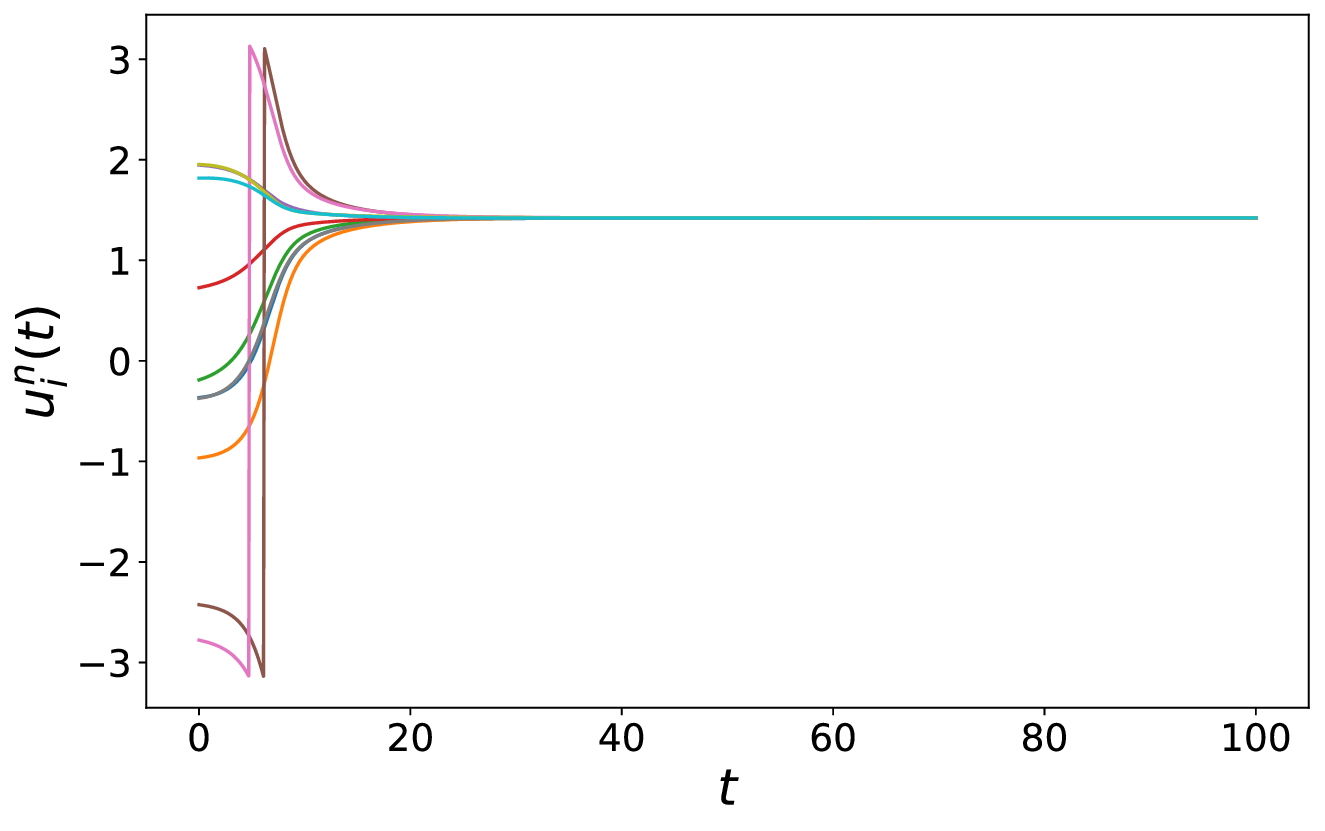}\\[-1ex]
{\footnotesize(c)}
\end{center}
\vspace*{-2ex}
\caption{Numerical simulation results for the KM \eqref{eqn:dsys}
 with $\omega=0$ and $n=500$  in case~(i):
(a) $(\kappa,K)=(1/3,0.65)$;  (b) $(1/8,1.7)$; (c) $(1/3,0.6)$.
The time history of every 50th node
 (from 25th to 475th) is plotted with a different color.
\label{fig:5b1}}
\end{figure}

\begin{figure}[t]
\begin{minipage}[t]{0.495\textwidth}
\begin{center}
\includegraphics[scale=0.27]{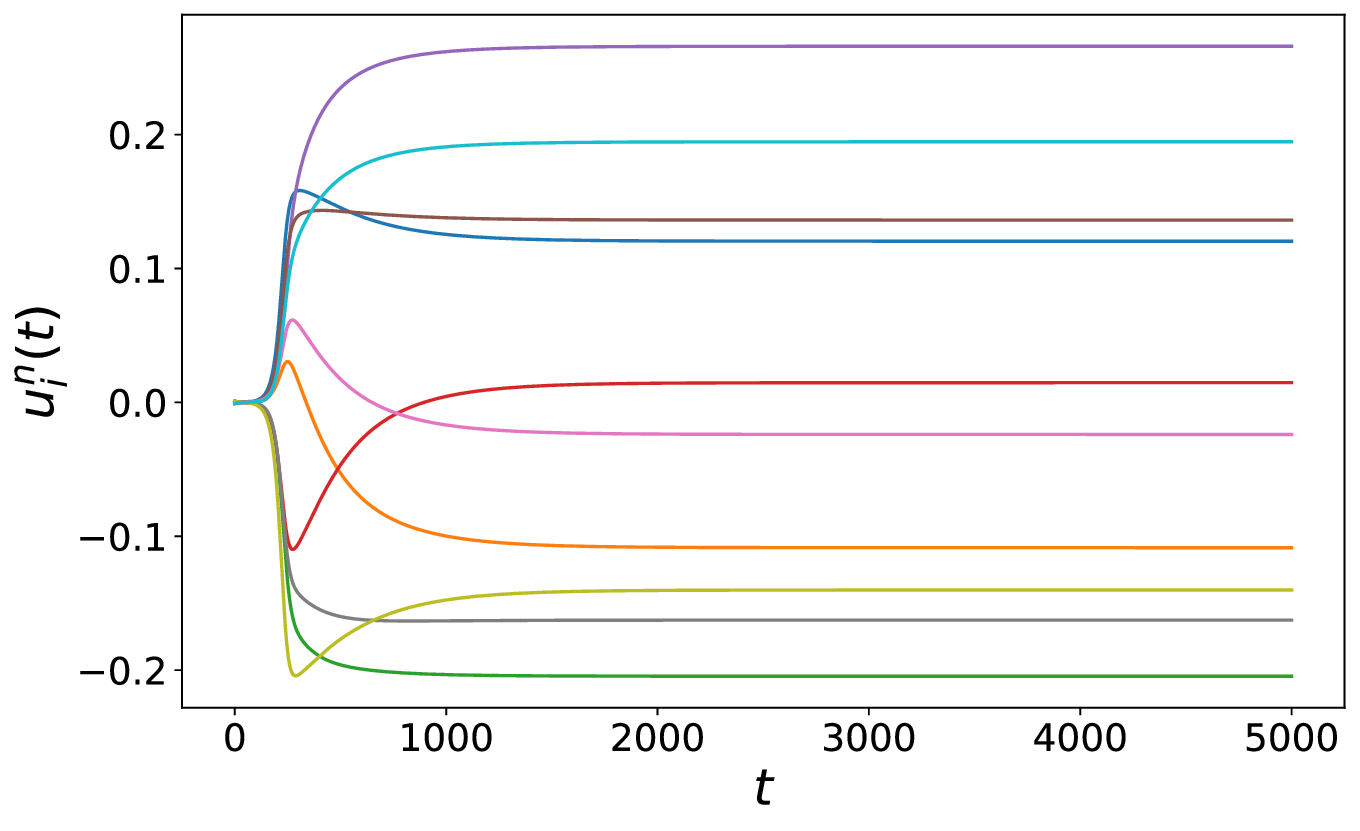}\\[-1ex]
{\footnotesize(a)}
\end{center}
\end{minipage}
\begin{minipage}[t]{0.495\textwidth}
\begin{center}
\includegraphics[scale=0.27]{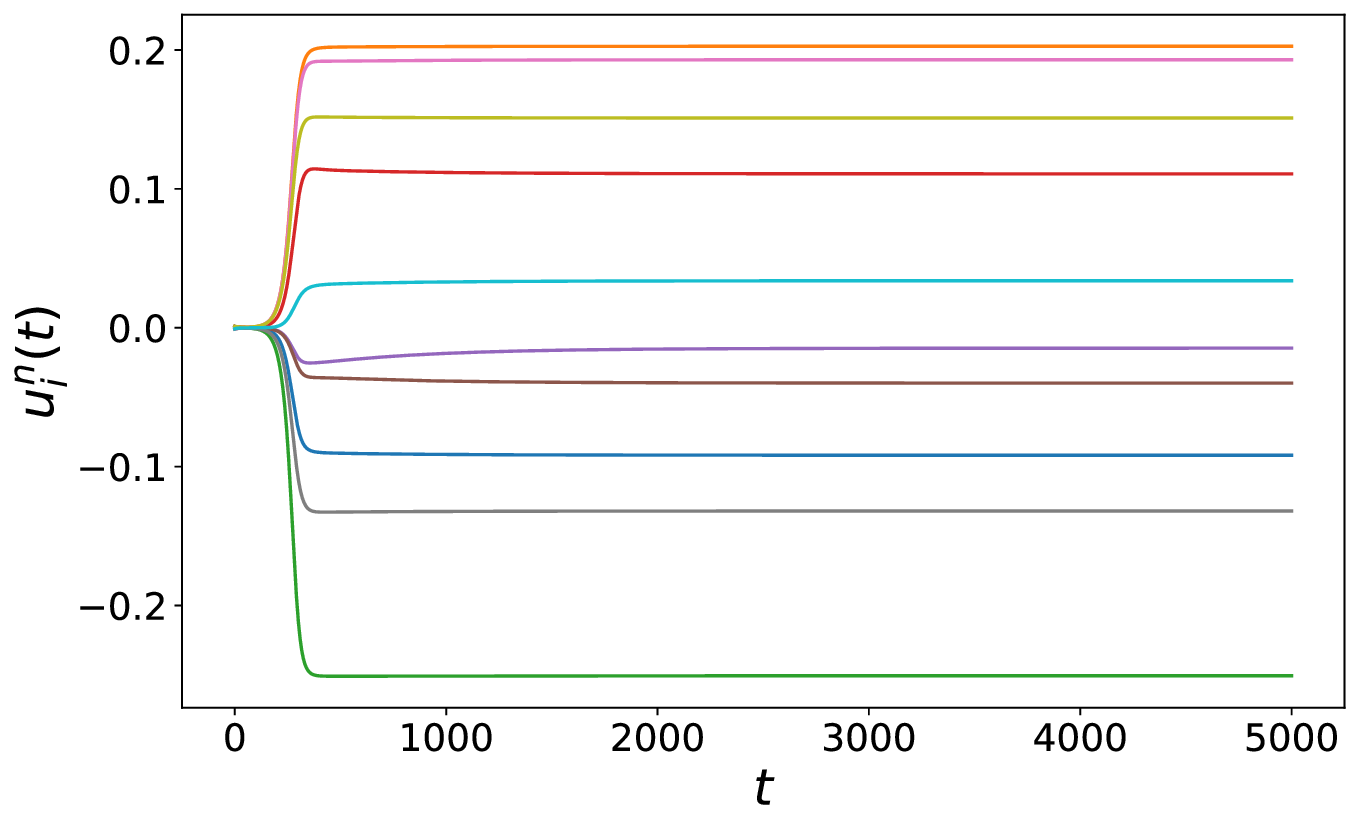}\\[-1ex]
{\footnotesize(b)}
\end{center}
\end{minipage}
\begin{center}
\includegraphics[scale=0.27]{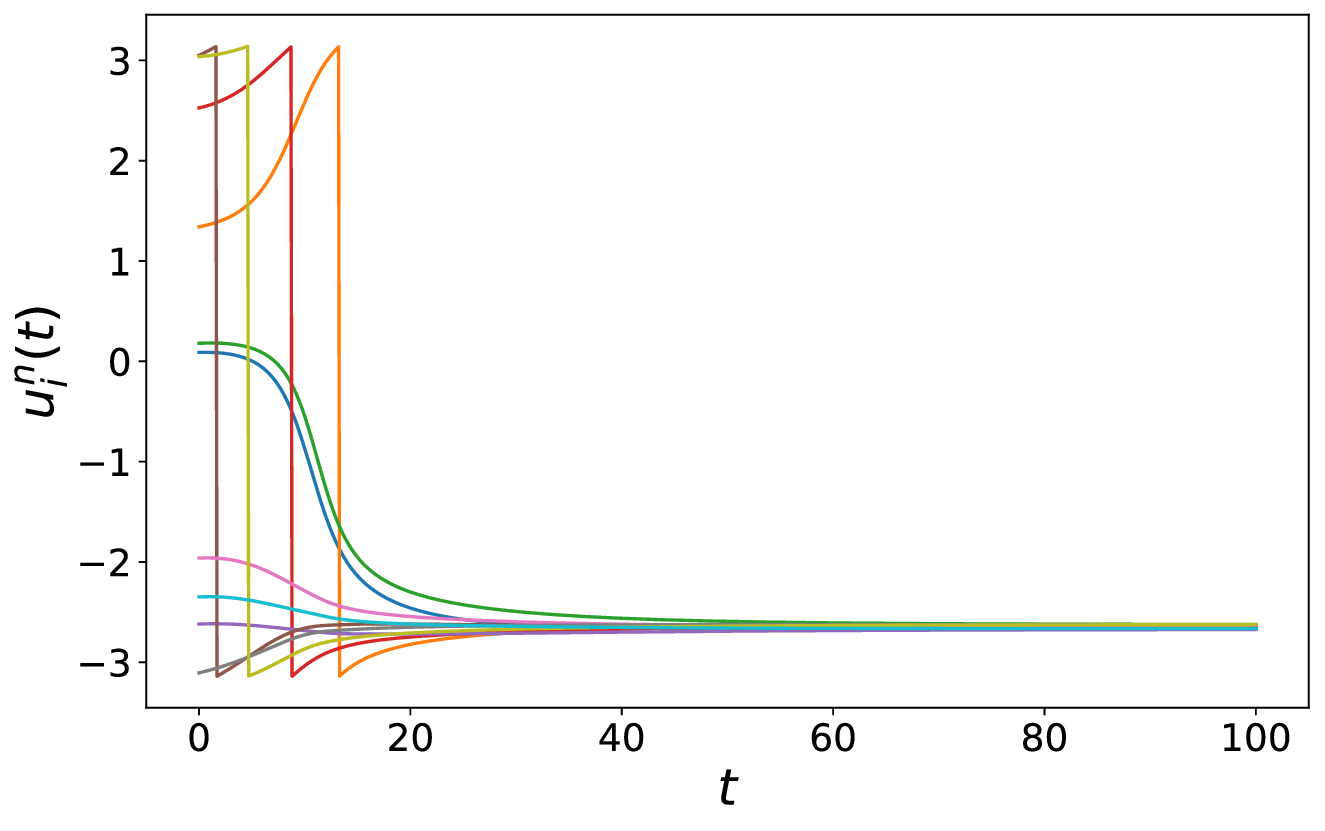}\\[-1ex]
{\footnotesize(c)}
\end{center}
\vspace*{-2ex}
\caption{Numerical simulation results for the KM \eqref{eqn:dsys}
 with $\omega=0$, $n=500$ and $p=0.5$ in case~(ii):
(a) $(\kappa,K)=(1/3,0.325)$;  (b) $(1/8,0.85)$; (c) $(1/3,0.3)$.
See also the caption of Fig.~\ref{fig:5b1}.
\label{fig:5b2}}
\end{figure}

\begin{figure}[t]
\begin{minipage}[t]{0.495\textwidth}
\begin{center}
\includegraphics[scale=0.27]{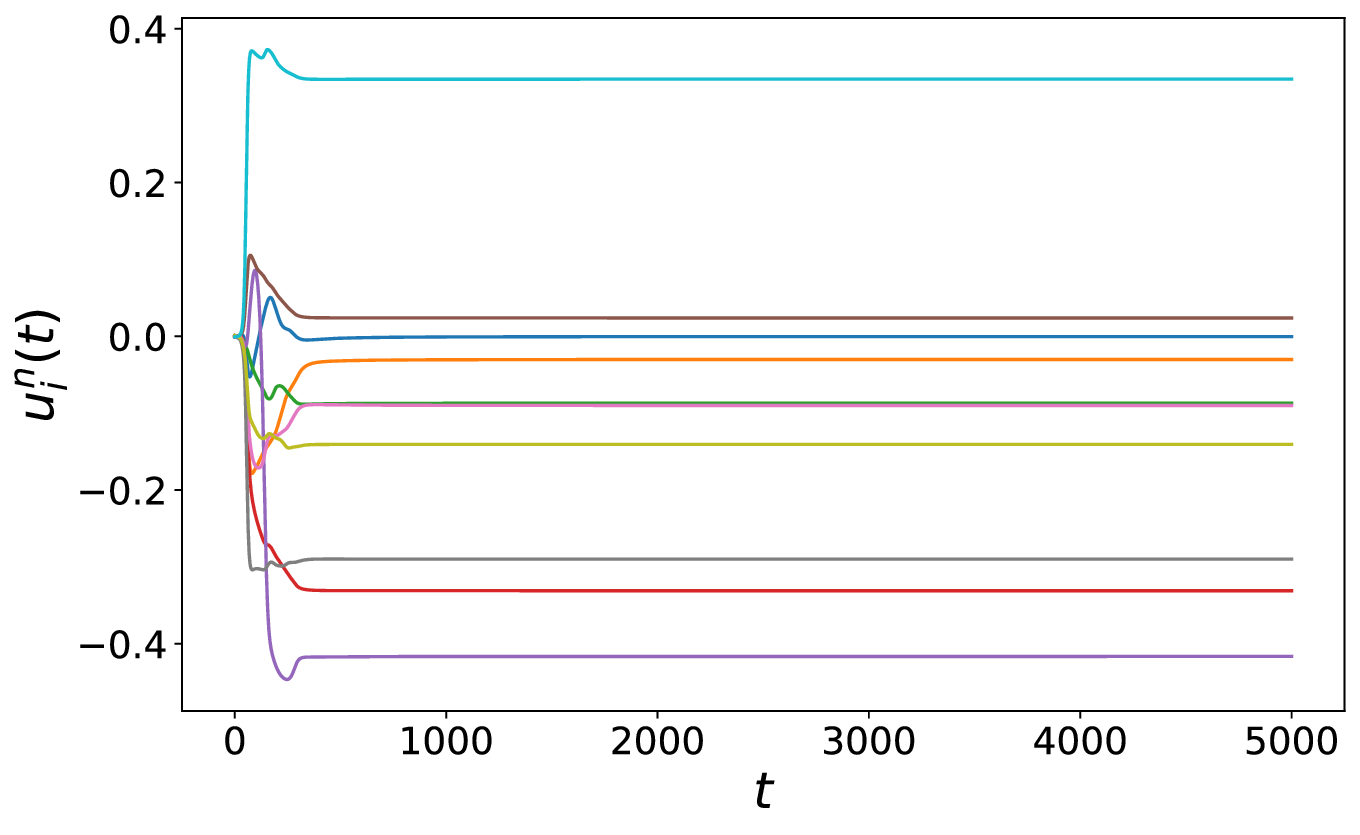}\\[-1ex]
{\footnotesize(a)}
\end{center}
\end{minipage}
\begin{minipage}[t]{0.495\textwidth}
\begin{center}
\includegraphics[scale=0.27]{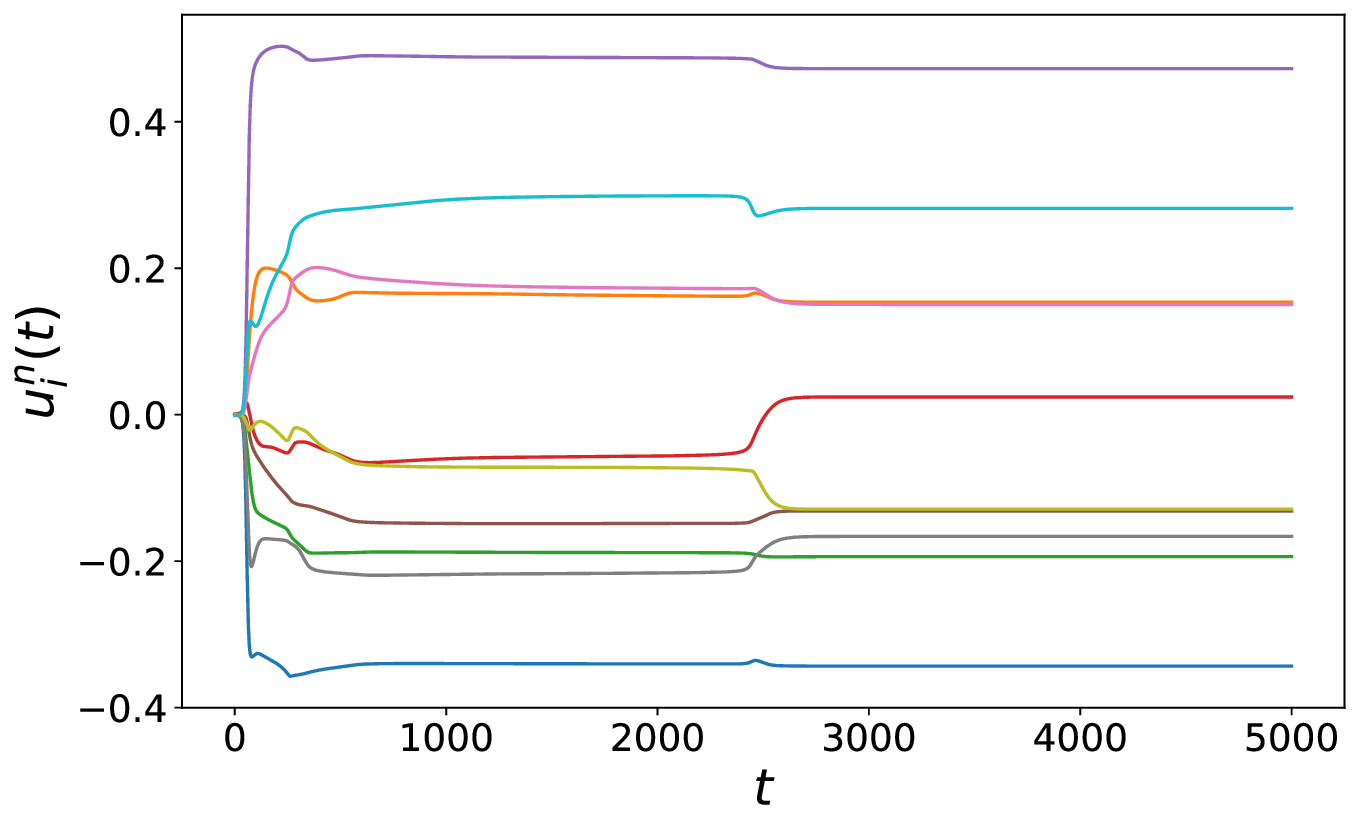}\\[-1ex]
{\footnotesize(b)}
\end{center}
\end{minipage}
\begin{center}
\includegraphics[scale=0.27]{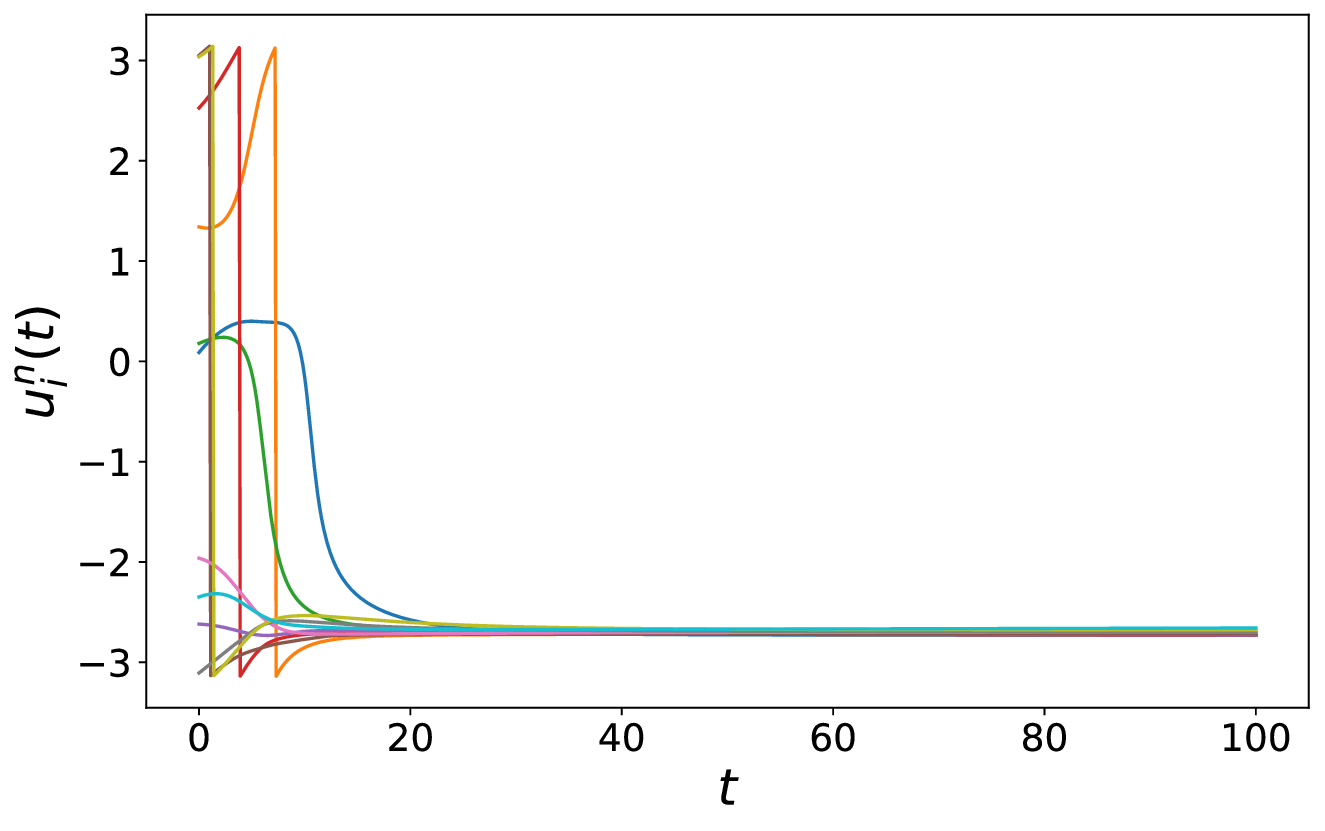}\\[-1ex]
{\footnotesize(c)}
\end{center}
\vspace*{-2ex}
\caption{Numerical simulation results for the KM \eqref{eqn:dsys}
 with $\omega=0$, $n=500$, $p=1$ and $\gamma=0.3$ in case~(iii):
(a) $(\kappa,K)=(1/3,0.65)$; 
(b) $(1/8,1.7)$;
(c) $(1/3,0.54)$.
See also the caption of Fig.~\ref{fig:5b1}.
\label{fig:5b3}}
\end{figure}

We carried out numerical simulations for the KM \eqref{eqn:dsys}
 with $\omega=0$ using the DOP853 solver \cite{HNW93}.
We mainly took $n=500$ but used $n=2000$ for some cases.
The initial values $u_i^n(0)$, $i\in[n]$,
 were independently randomly chosen
 on $[-\pi,\pi]$ or on $[-10^{-3},10^{-3}]$,
 depending on whether completely synchronized states in the CL \eqref{eqn:csys}
 are predicted
 by Theorem~\ref{thm:4a} to be stable or not, i.e., $K$ is smaller or larger than $K_\ell$.
So if the latter initial condition is chosen
 and the completely synchronized state is asymptotically stable,
 then the response of \eqref{eqn:dsys} converges to it as $t\to\infty$.

Figures~\ref{fig:5b1}, \ref{fig:5b2} and \ref{fig:5b3}
 show the time-histories of every $50$th node (from 25th to 475th)
 for case~(i), (ii) and (iii), respectively.
We chose $\kappa=1/3$ in Figs.~\ref{fig:5b1}-\ref{fig:5b3}(a) and (c)
 and $\kappa=1/8$ in Figs.~\ref{fig:5b1}-\ref{fig:5b3}(b),
 while $K=0.65$ or $0.6$ (resp. $0.54$) in Fig.~\ref{fig:5b1} (resp. Fig.~\ref{fig:5b3}),
 and $K=0.325$ or $0.3$ in Fig.~\ref{fig:5b2}.
The values of $K$ are larger or smaller only than $K_\ell$ with $\ell=2$ (resp. with $\ell=6$),
 which is the lowest of $K_j$, $j\in\Nset$, for $\kappa=1/3$ (resp. for $\kappa=1/8$),
 in Figs.~\ref{fig:5b1}-\ref{fig:5b3}(a) and (c) (resp.  in Figs.~\ref{fig:5b1}-\ref{fig:5b3}(b)).
See Eq.~\eqref{eqn:Kl} and Fig.~\ref{fig:3a}.
We observe that the responses converge to the completely synchronized states
 in Figs.~\ref{fig:5b1}-\ref{fig:5b3}(c)
 but to different synchronized states in Figs.~\ref{fig:5b1}-\ref{fig:5b3}(a) and (b),
 as predicted by Theorem~\ref{thm:4a}.
This suggests that the KM \eqref{eqn:dsys} suffers a bifurcation
 as stated in Remark~\ref{rmk:4a}(iii).

\begin{figure}[t]
\begin{minipage}[t]{0.495\textwidth}
\begin{center}
\includegraphics[scale=0.3]{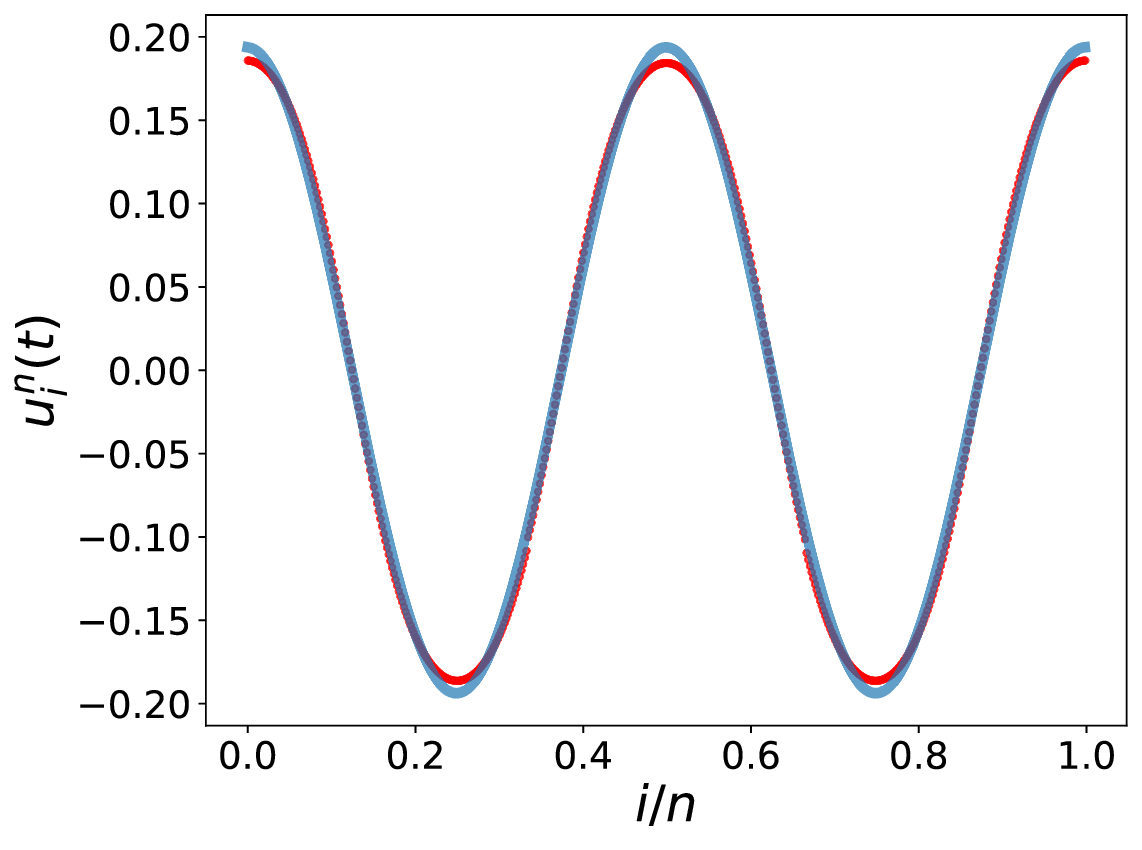}\\
{\footnotesize(a)}
\end{center}
\end{minipage}
\begin{minipage}[t]{0.495\textwidth}
\begin{center}
\includegraphics[scale=0.3]{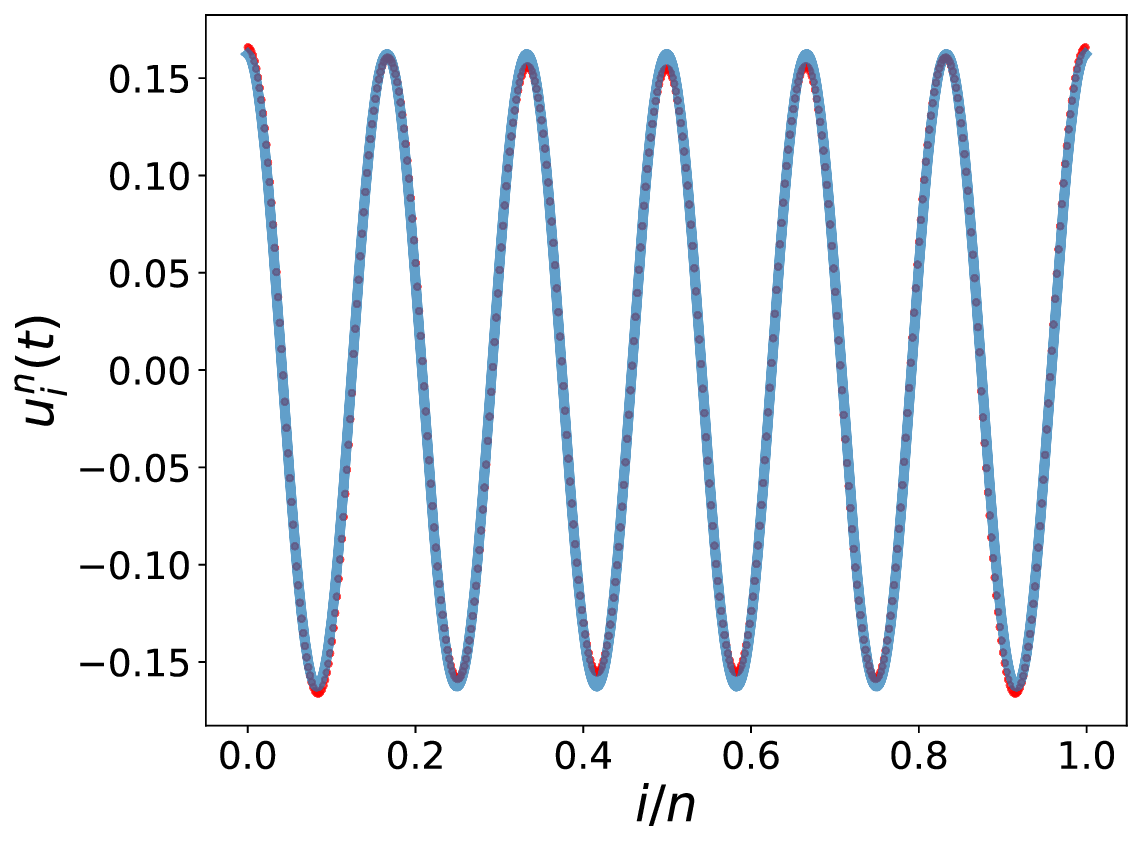}\\
{\footnotesize(b)}
\end{center}
\end{minipage}
\caption{Steady states of the KM \eqref{eqn:dsys}
 with $\omega=0$ and $n=500$ in case~(i):
(a) $(\kappa,K)=(1/3,0.65)$; 
(b) $(1/8,1.7)$.
Here $u_i^n(t)$, $i\in[n]$,  with $t=5000$ or $t=10000$ are plotted as red dots.
 in plates~(a) and (b), respectively.
The blue line represents the curve given by \eqref{eqn:ss}
 fitting the numerical computations. 
\label{fig:5c1}}
\end{figure}

\begin{figure}[t]
\begin{minipage}[t]{0.495\textwidth}
\begin{center}
\includegraphics[scale=0.3]{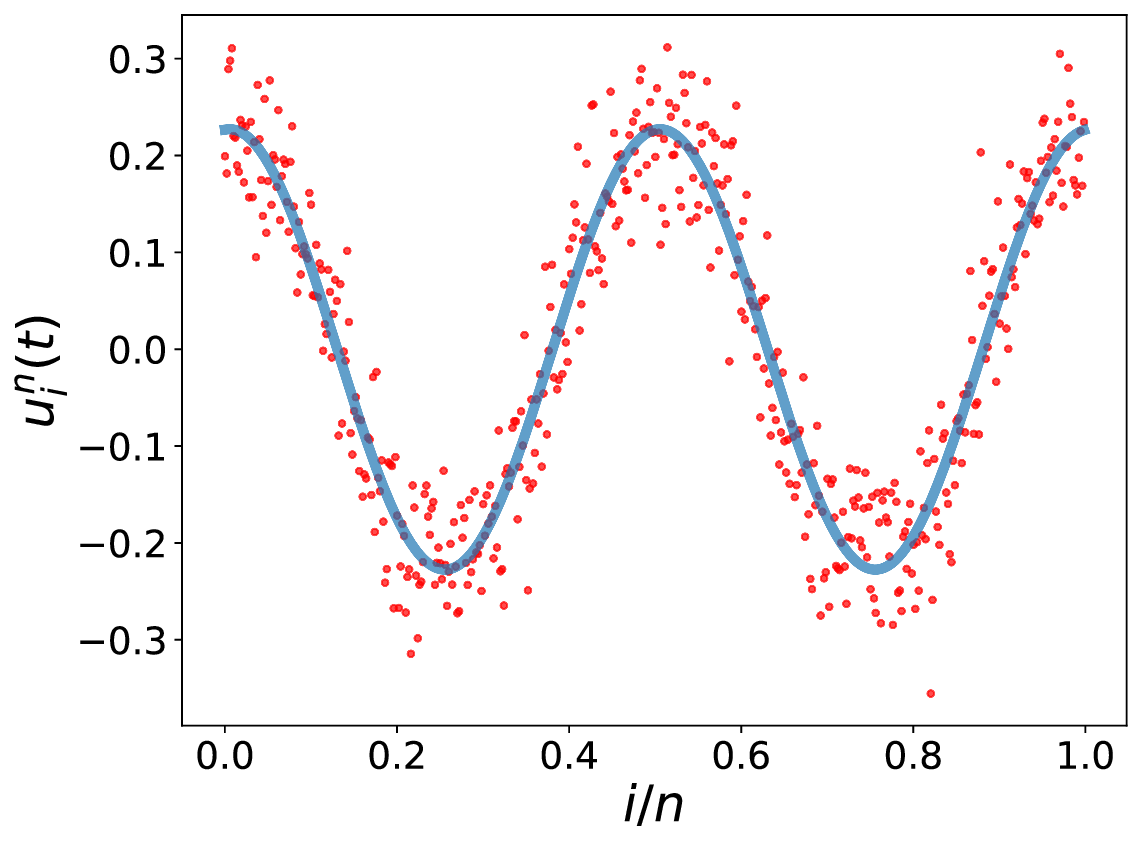}\\
{\footnotesize(a)}
\end{center}
\end{minipage}
\begin{minipage}[t]{0.495\textwidth}
\begin{center}
\includegraphics[scale=0.3]{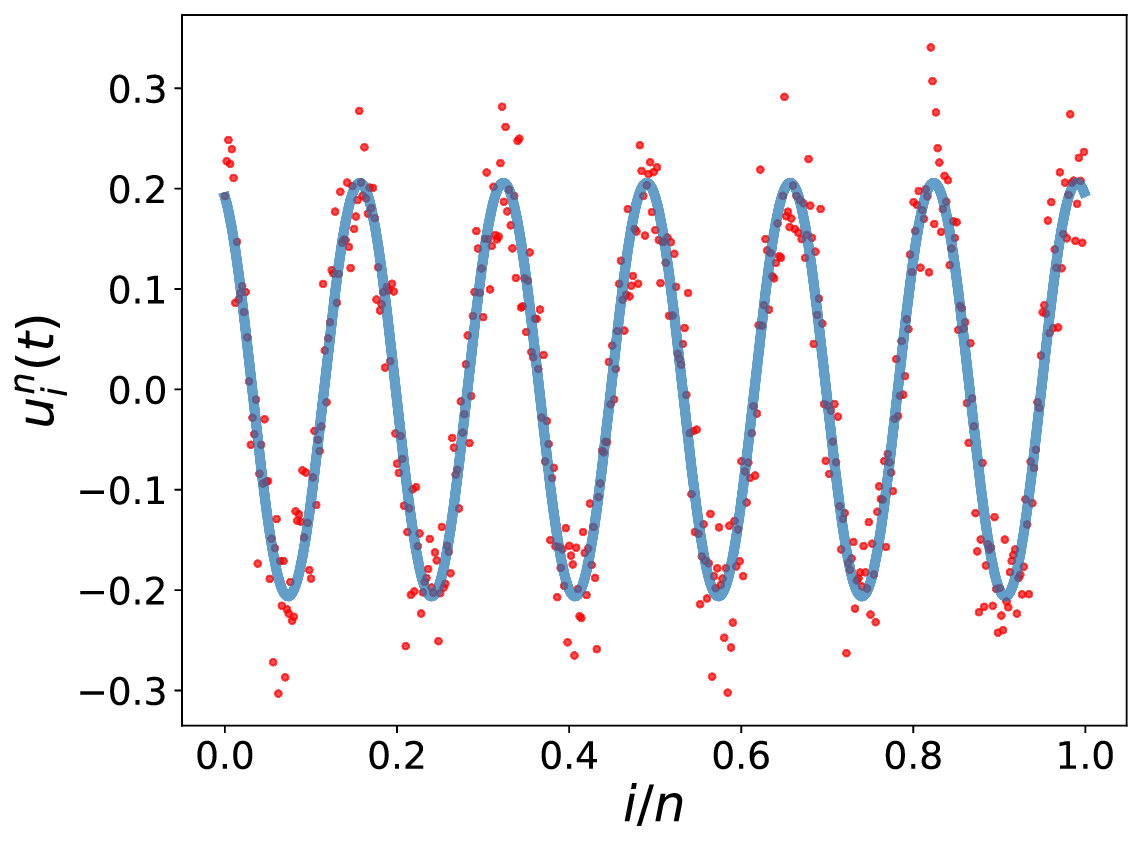}\\
{\footnotesize(b)}
\end{center}
\end{minipage}
\caption{Steady states of the KM \eqref{eqn:dsys}
 with $\omega=0$, $n=500$, $p=0.5$ and $t=5000$ in case~(ii):
(a) $(\kappa,K)=(1/3,0.325)$;  (b) $(1/8,0.85)$.
See also the caption of Fig.~\ref{fig:5c1}.
\label{fig:5c2}}
\end{figure}

\begin{figure}[t]
\begin{minipage}[t]{0.495\textwidth}
\begin{center}
\includegraphics[scale=0.3]{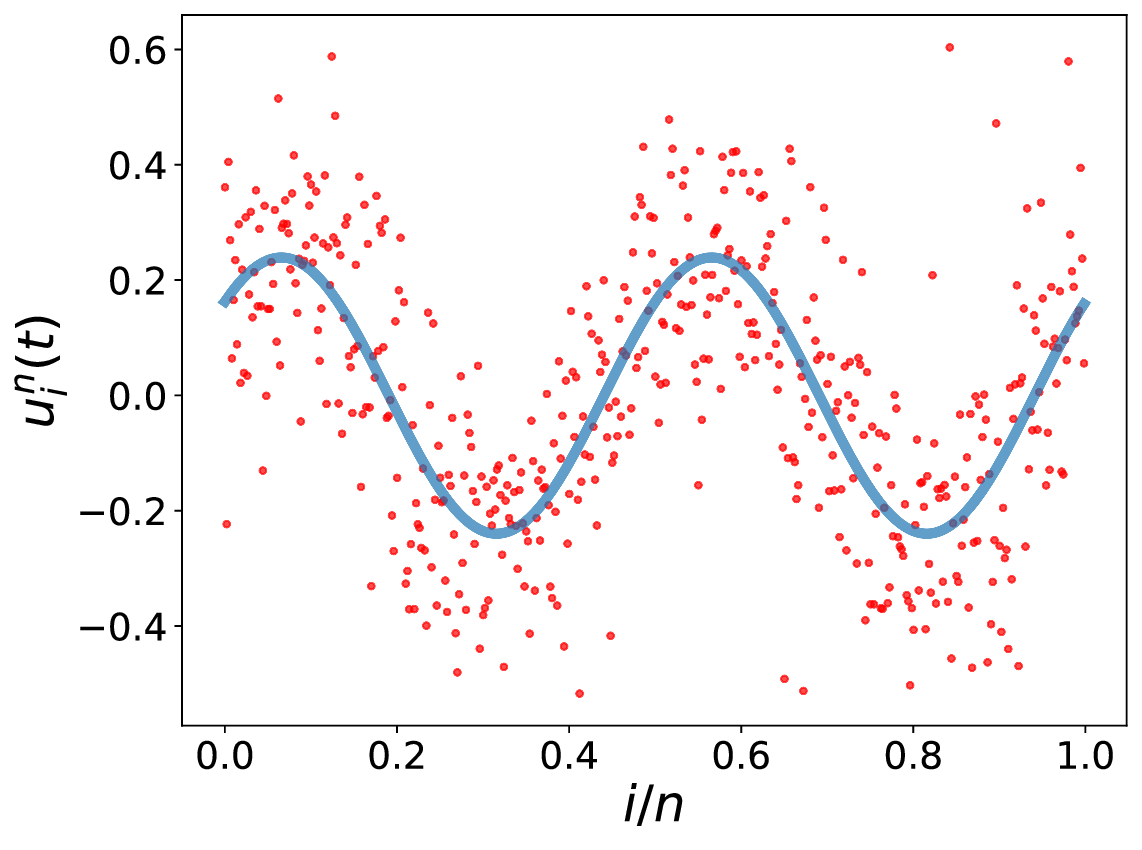}\\
{\footnotesize(a)}
\end{center}
\end{minipage}
\begin{minipage}[t]{0.495\textwidth}
\begin{center}
\includegraphics[scale=0.3]{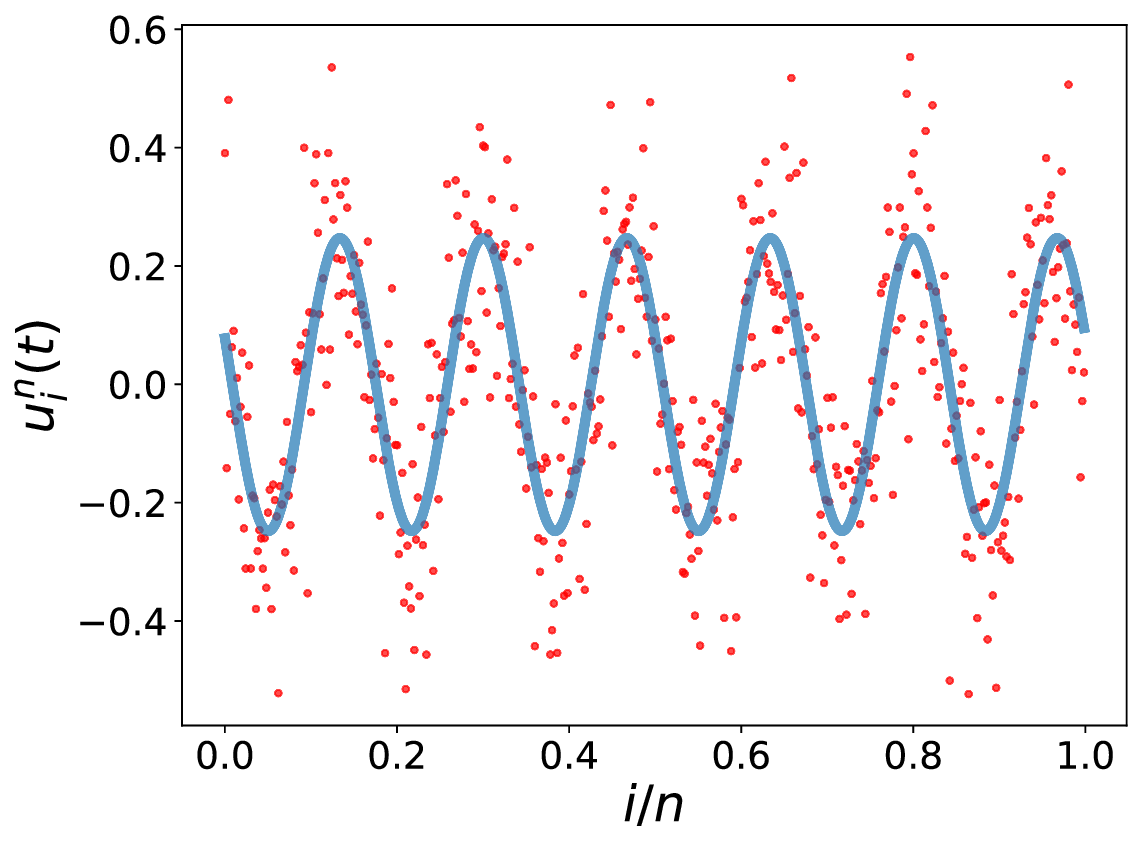}\\
{\footnotesize(b)}
\end{center}
\end{minipage}
\caption{Steady states of the KM \eqref{eqn:dsys}
 with $\omega=0$, $n=500$, $p=1$, $\gamma=0.3$ and $t=5000$ in case~(iii):
(a) $(\kappa,K)=(1/3,0.65)$;  (b) $(1/8,1.7)$.
See also the caption of Fig.~\ref{fig:5c1}.
\label{fig:5c3}}
\end{figure}

In Figs.~\ref{fig:5c1}, \ref{fig:5c2} and \ref{fig:5c3},
 $u_i^n(t)$, $i\in[n]$, are plotted for cases~(i), (ii) and (iii), respectively,
 where the time $t$ was chosen such that they may be regarded as the steady states
 from the results of Figs.~\ref{fig:5b1}-\ref{fig:5b3}.
Here the same values of $K$, $\kappa$, $p$ and $\gamma$
 as in Figs.~\ref{fig:5b1}-\ref{fig:5b3}(a) and (b) were used.
The blue line in each figure represents the most appropriate leading term
 of the stationary solution given in Theorem~\ref{thm:4a}(ii),
\begin{equation}
u(x)=r\sin(2\pi\ell+\psi)+\theta,
\label{eqn:ss}
\end{equation}
with $\ell=2$ or $6$ for the numerical computations,
 where it was estimated by using the least mean square method.
See Table~\ref{tbl:5a} for the estimated values of $r$ and $\psi$ for the line in each figure.
On the other hand, $\theta$ was estimated as $|\theta|<2.5\times 10^{-5}$ in all the figures.
We observe that the agreement between the numerical results
 and theoretical predictions by Theorem~\ref{thm:4a} for the CL \eqref{eqn:csys}
 is fine for case~(i) in Fig.~\ref{fig:5c1} and good for case~(ii) in Fig.~\ref{fig:5c2} 
 although some fluctuations due to randomness are found in the latter.
In particular, they almost completely coincide in Fig~\ref{fig:5c1}.
Thus, the KM \eqref{eqn:dsys} also suffers a bifurcation
 similar to one detected by Theorem~\ref{thm:4a} for  the CL \eqref{eqn:csys},
 as stated in Remark~\ref{rmk:4a}(iii).
However, it is not good for case~(iii) in Fig.~\ref{fig:5c3}
 although the numerical computed steady sate is obviously different
 from the completely synchronized states.
The reason for this result is considered to be that
 the node number $n=500$ is not enough
 for approximation of the KM \eqref{eqn:dsys} by the CL \eqref{eqn:csys} in case~(iii).
Such an observation for the difference between a random sparse graph
 and a deterministic dense or random dense one was also given
 for the classical KM in \cite{IY23},
 although the difference was not so big.

\begin{table}[t]
\caption{Values of $r$ and $\psi$ for the most appropriate leading term \eqref{eqn:ss}
 of the stationary solution in Figs.~\ref{fig:5c1}-\ref{fig:5c3}.
Here they are given with five decimal places.
\label{tbl:5a}}
\begin{tabular}{c|c|c|c|c|c|c}
\hline
& Fig.~\ref{fig:5c1}(a) & Fig.~\ref{fig:5c1}(b)  & Fig.~\ref{fig:5c2}(a) & Fig.~\ref{fig:5c2}(b) &
 Fig.~\ref{fig:5c3}(a) & Fig.~\ref{fig:5c3}(b)\\
\hline
 $r$ & $0.19379$ & $0.16242$ & $0.22731$ & $0.20599$ & $0.23947$ & $0.24756$\\
 $\psi$ & $1.58693$ & $1.60601$ & $1.50260$ & $1.94671$ & $0.74919$ & $2.82065$\\
\hline
\end{tabular}
\end{table}

\begin{figure}[t]
\begin{minipage}[t]{0.495\textwidth}
\begin{center}
\includegraphics[scale=0.3]{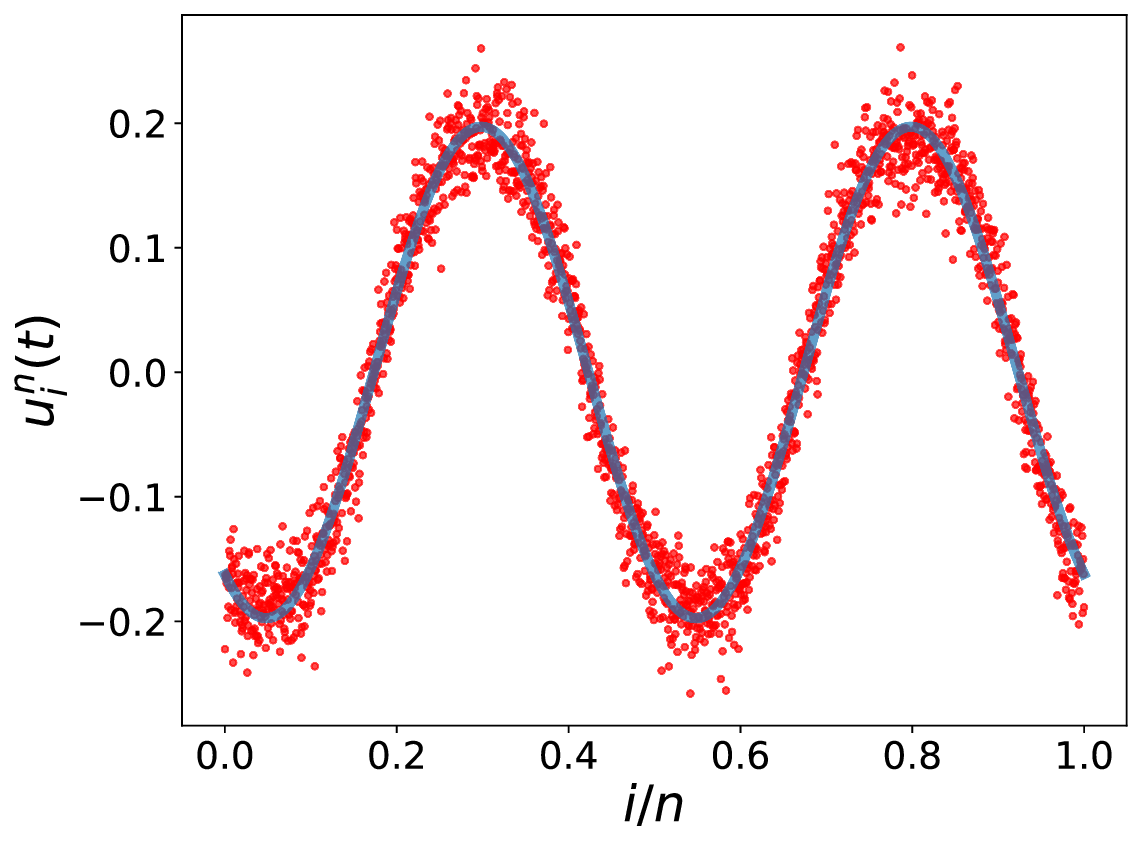}\\
{\footnotesize(a)}
\end{center}
\end{minipage}
\begin{minipage}[t]{0.495\textwidth}
\begin{center}
\includegraphics[scale=0.3]{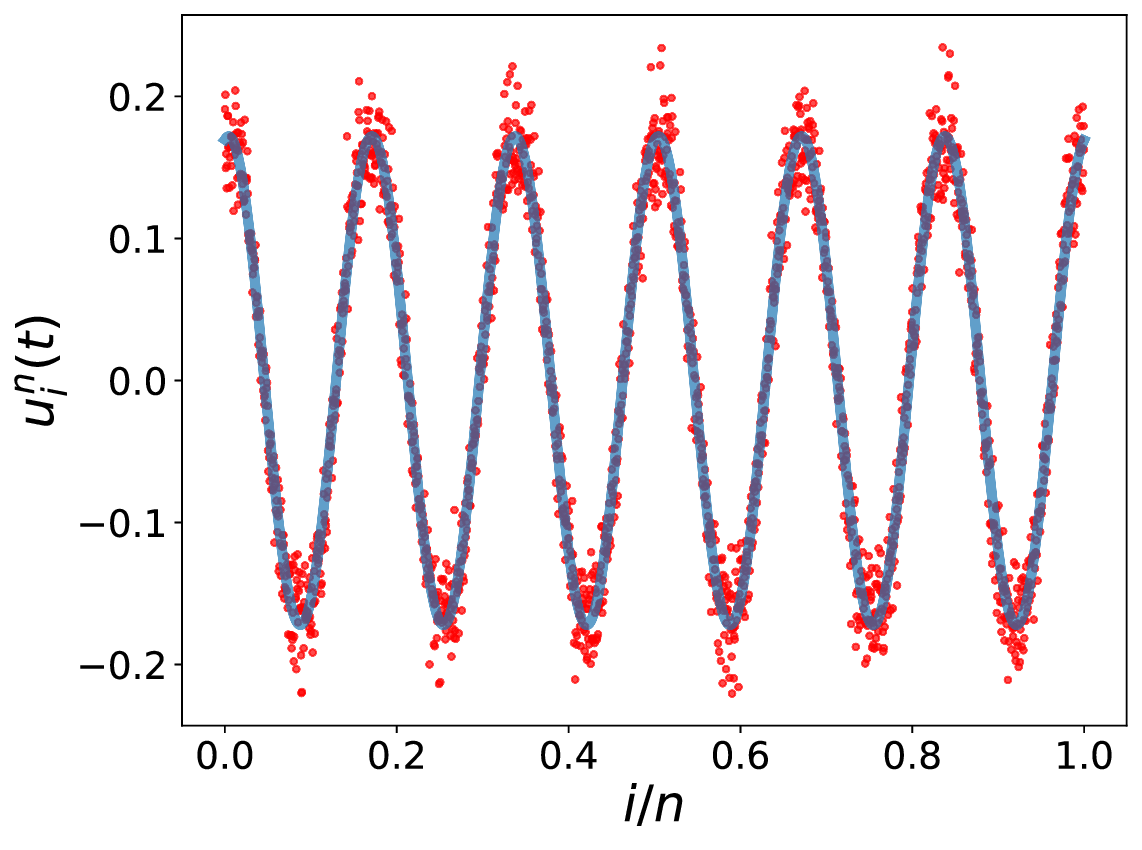}\\
{\footnotesize(b)}
\end{center}
\end{minipage}
\caption{Steady states of the KM \eqref{eqn:dsys}
 with $\omega=0$, $n=2000$, $p=0.5$ and $t=1000$ in case~(ii):
(a) $(\kappa,K)=(1/3,0.325)$;  (b) $(1/8,0.85)$.
See also the caption of Fig.~\ref{fig:5c1}.
\label{fig:5d1}}
\end{figure}

\begin{figure}[t]
\begin{minipage}[t]{0.495\textwidth}
\begin{center}
\includegraphics[scale=0.3]{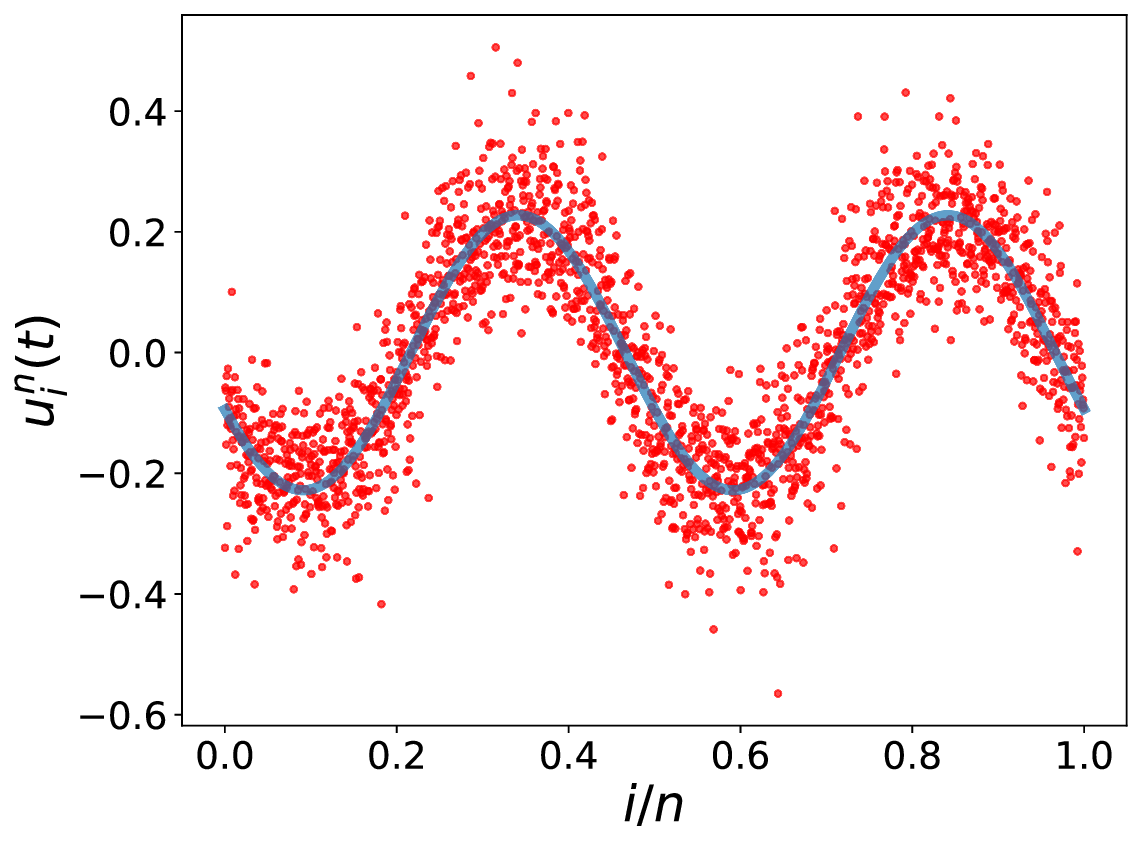}\\
{\footnotesize(a)}
\end{center}
\end{minipage}
\begin{minipage}[t]{0.495\textwidth}
\begin{center}
\includegraphics[scale=0.3]{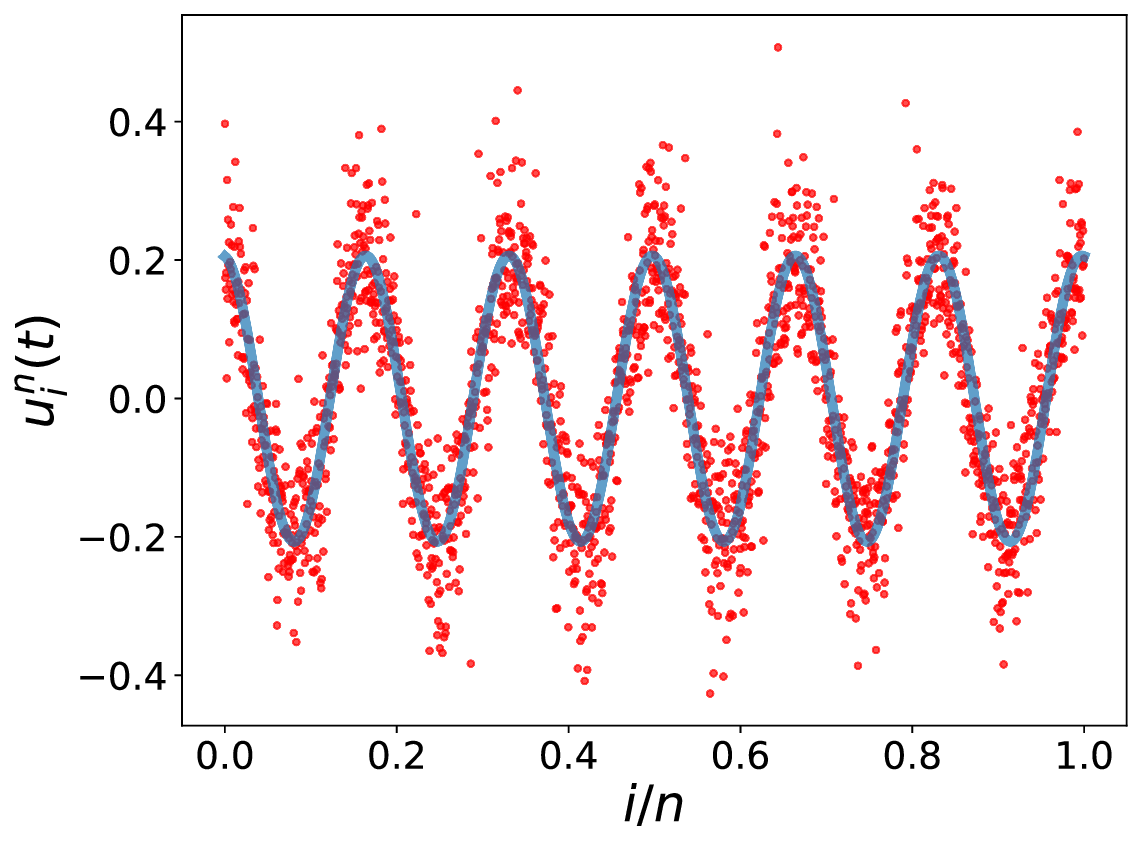}\\
{\footnotesize(b)}
\end{center}
\end{minipage}
\caption{Steady states of the KM \eqref{eqn:dsys}
 with $\omega=0$, $n=2000$, $p=1$, $\gamma=0.3$ and $t=1000$ in case~(iii):
(a) $(\kappa,K)=(1/3,0.65)$;  (b) $(1/8,1.7)$.
See also the caption of Fig.~\ref{fig:5c1}.
\label{fig:5d2}}
\end{figure}

Figures~\ref{fig:5d1} and \ref{fig:5d2}
 show the same results as Figs.~\ref{fig:5c2} and \ref{fig:5c3}, respectively,
 not for $n=500$ but for $n=2000$.
The approximations of the KM \eqref{eqn:dsys} 
 by the CL \eqref{eqn:csys} are better than Figs.~\ref{fig:5c2} and \ref{fig:5c3}.
See Table~\ref{tbl:5b} for the values of $r$ and $\psi$ in \eqref{eqn:ss}
 for the numerical computations.
On the other hand, $\theta$ was estimated as $|\theta|<2\times 10^{-5}$
 in these figures.

\begin{table}[t]
\caption{Values of $r$ and $\psi$ for the most appropriate leading term \eqref{eqn:ss}
 of the stationary solution in Figs.~\ref{fig:5d1} and \ref{fig:5d2}.
Here they are given with five decimal places.
\label{tbl:5b}}
\begin{tabular}{c|c|c|c|c}
\hline
& Fig.~\ref{fig:5d1}(a) & Fig.~\ref{fig:5d1}(b) & Fig.~\ref{fig:5d2}(a) & Fig.~\ref{fig:5d2}(b)\\
\hline
 $r$ & $0.19726$ & $0.17194$ & $0.22777$ & $0.20647$\\
 $\psi$ & $-2.17431$ & $ 1.40843$ & $-2.71436$ & $1.67886$\\
\hline
\end{tabular}
\end{table}

\begin{figure}[t]
\begin{minipage}[t]{0.495\textwidth}
\begin{center}
\includegraphics[scale=0.495]{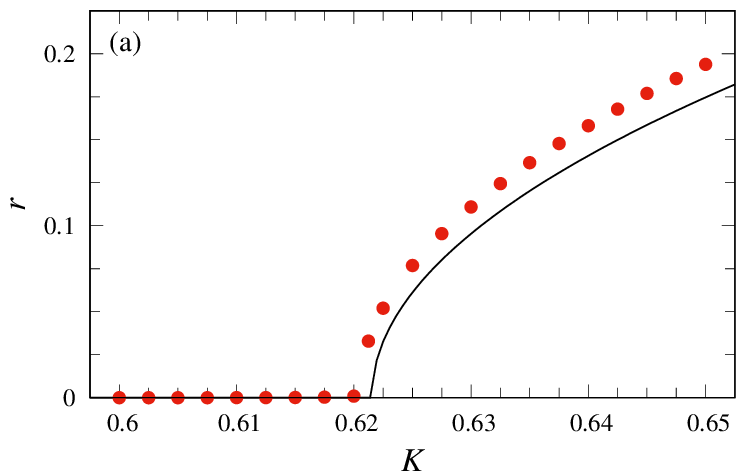}
\end{center}
\end{minipage}
\begin{minipage}[t]{0.495\textwidth}
\begin{center}
\includegraphics[scale=0.495]{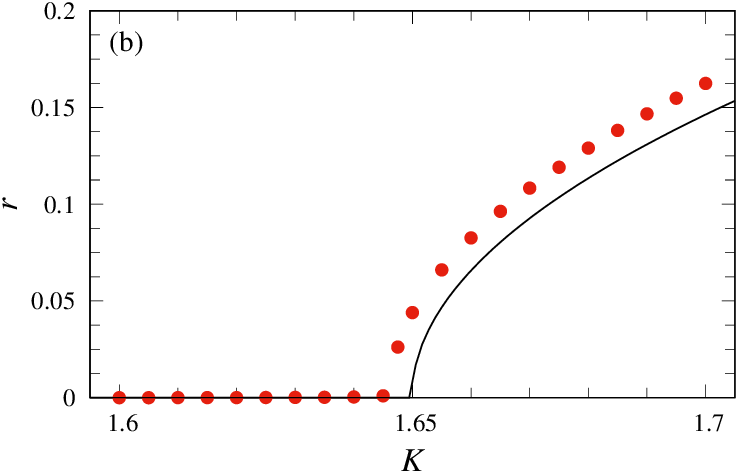}
\end{center}
\end{minipage}
\caption{Numerically computed bifurcation diagrams for the KM \eqref{eqn:dsys}
 with $\omega=0$ and $n=500$ in case~(i):
(a) $\kappa=1/3$;  (b) $\kappa=1/8$.
The ordinate represents the amplitude $r$
 of the $\ell$-humped sinusoidal shape \eqref{eqn:ss}
 estimated for the numerically computed steady state.
The small red disks and black lines represent numerical computations
 and theoretical predictions for the CL \eqref{eqn:csys} in Theorem~\ref{thm:4a}.
\label{fig:5e}}
\end{figure}

In Fig.~\ref{fig:5e}
 we give numerically computed bifurcation diagrams for the KM \eqref{eqn:dsys}
 with $n=500$ in case~(i).
Here the same value of $\kappa$
 as in Figs.~\ref{fig:5b1} and \ref{fig:5c1} were used.
The ordinate represents the amplitude $r$
 of the $\ell$-humped sinusoidal shape \eqref{eqn:ss}
 estimated for the numerically computed  steady state,
 as in Figs.~\ref{fig:5c1}-\ref{fig:5d2} and Tables~\ref{tbl:5a} and \ref{tbl:5b}.
The black line represents the theoretical predictions for the CL \eqref{eqn:csys}
 in Theorem~\ref{thm:4a}.
Their agreement is qualitatively good
 although differences due to finite size effects of the KM \eqref{eqn:dsys} are found.
 
\section*{Acknowledgements}
This work was partially supported by the JSPS KAKENHI Grant Number JP23K22409.

% **********************************************************
% Appendices
% **********************************************************

\appendix

\renewcommand{\theequation}{\Alph{section}.\arabic{equation}}

\section{Derivation of \eqref{eqn:ifex}}

We first rewrite the CL \eqref{eqn:csys} as
\begin{align}
&
\frac{\partial}{\partial t}u(t,x)\notag\\
&
=p\cos u(t,x)\int_I\sin u(t,y)\d y-K\cos 2u(t,x)\int_I W_2(x,y)\sin 2u(t,y)\d y\notag\\
&\quad
 -p\sin u(t,x)\int_I\cos u(t,y)\d y+K\sin 2u(t,x)\int_I W_2(x,y)\cos2u(t,y)\d y.
\label{eqn:csys1}
\end{align}
Letting \eqref{eqn:solex}, we have
\begin{align*}
&\cos u(t,y)\\
&=
1-\tfrac{1}{4}((\xi_\ell^2+\eta_\ell^2)
+(\xi_\ell^2-\eta_\ell^2)\cos 4\pi\ell y
 +2\xi_\ell\eta_\ell\sin 4\pi\ell y))+O(4),\\[2ex]
&
\sin u(t,y)\\
&=\sum_{i=1}^\infty(\xi_i\cos 2\pi iy+\eta_i\sin2\pi iy)
 -\tfrac{1}{8}(\xi_\ell^2+\eta_\ell^2)(\xi_\ell\cos2\pi\ell y+\eta_\ell\sin 2\pi\ell y)\\
&\quad
-\tfrac{1}{24}((\xi_\ell^2-3\eta_\ell^2)\xi_\ell\cos 6\pi\ell y
 +(3\xi_\ell^2-\eta_\ell^2)\eta_\ell\sin 6\pi\ell y)
 +O(5),
\end{align*}
and compute
\begin{align*}
\int_I\cos u(t,x)\d y=1-\tfrac{1}{4}(\xi_\ell^2+\eta_\ell^2)+O(4),\quad
\int_I\sin u(t,x)\d y=O(5)
\end{align*}
and
\begin{align*}
&
\int_I W_2(x,y)\cos 2u(t,y)\d y\\
&
=2\kappa-2\kappa(\xi_\ell^2+\eta_\ell^2)
 -\delta_{2\ell}((\xi_\ell^2-\eta_\ell^2)\cos 4\pi\ell x
 +2\xi_\ell\eta_\ell\sin 4\pi\ell x)+O(4),\\
&
\int_I W_2(x,y)\sin 2u(t,y)\d y\\
&=2\sum_{i=1}^\infty\delta_i(\xi_i\cos 2\pi ix+\eta_i\sin2\pi ix)\\
&\quad
-\delta_\ell(\xi_\ell^2+\eta_\ell^2)(\xi_\ell\cos2\pi\ell x+\eta_\ell\sin 2\pi\ell x)\\
&\quad
-\tfrac{1}{3}\delta_{3\ell}((\xi_\ell-3\eta_\ell^2)\xi_\ell\cos 6\pi\ell x
 +(3\xi_\ell-\eta_\ell^2)\eta_\ell\sin 6\pi\ell x)
 +O(5),\\
\end{align*}
where $O(k)$ represents a higher-order term of
\[
O\left((\xi_\ell^2+\eta_\ell^2)^{k/2}+\sum_{j=1,j\neq\ell}^\infty( \xi_i^2+\eta_i^2)^{3/2}\right).
\]
We substitute \eqref{eqn:solex} into \eqref{eqn:csys1}
 and integrate the resulting equation with respect to $x$ from $0$ to $1$
 after multiplying it with $\cos 2\pi j$ or $\sin 2\pi j$, $j\in\Nset$.
Note that the right-hand side becomes the Fourier coefficient of an $L^2(I)$-function 
 and by Theorem~\ref{thm:2a} $\dot{\mathbf{u}}(t)\in L^2(I)$.
Thus, we obtain \eqref{eqn:ifex} after lengthy calculations.

\section{Validity of Application of the Center Manifold Theory}

In this appendix
 we briefly explain the validity of application of the center manifold theory \cite{HI11}
 in Section~4.
See Chapter~2 of \cite{HI11} for the details of the theory.
A more general case is treated there.

Consider a general dynamical system
\begin{equation}
\frac{\d\mathbf{u}}{\d t}=\L\mathbf{u}+R(\mathbf{u};\mu)
\label{eqn:b1}
\end{equation}
in $L^2(I)$, where $\L:L^2(I)\to L^2(I)$ is a linear bounded operator like \eqref{eqn:lep}
 and $R\in C^l(L^2(I)\times\Rset,L^2(I))$ ($l\ge 2$)
 satisfies $R(\mathbf{0},0)=\mathbf{0}$ and $\D_\mathbf{u}R(\mathbf{0},0)=\mathbf{0}$
 when $\mu=0$.
In particular, the origin $\mathbf{u}=\mathbf{0}$ is a solution to \eqref{eqn:b1}.
The CL \eqref{eqn:csys} has the form \eqref{eqn:b1}.

Let the spectrum of $\L$ be decomposed as
\[
\sigma(\L)=\sigma_+(\L)\cup\sigma_-(\L)\cup\sigma_0(\L),
\] 
where $\sigma_+(\L)$, $\sigma_-(\L)$ and $\sigma_0(\L)$, respectively,
 are the unstable, stable and center spectrum,
 and consist of complex numbers with positive, negative and zero real parts.
We assume the following:
\begin{enumerate}
\setlength{\leftskip}{-0.8em}
\item[\bf(H1)]
There exists a constant $c>0$ such that
\[
\inf_{\lambda\in\sigma_+}\lambda>c,\quad
\sup_{\lambda\in\sigma_-}\lambda<-c;
\]
Moreover, the set $\sigma_0$ consists
 of a finite number of eigenvalues with finite algebraic multiplicities.
\end{enumerate}
It follows from the result of Section~3
 that the CL \eqref{eqn:csys} satisfies (H1)
 at $K=K_\ell$ under the hypothesis of Theorem~\ref{thm:4a}.

Define the spectral projection by the Dunford integral formula as
\[
\P_0=\frac{1}{2\pi i}\int_\Gamma(\lambda\id-\L)^{-1}\d\lambda
\]
where $\id:L^2(I)\to L^2(I)$ represents the identity operator
 and $\Gamma\subset\{\lambda\in\Cset\mid |\Re\lambda|<c\}$
 is a simple, oriented counterclockwise, Jordan curve surrounding $\sigma_0$.
Let $X_0=\mathrm{Im}\P_0$ and $X_\h=\mathrm{ker}\P_0$,
 and let $\L_0$ and $\L_\h$ denote the restriction
 of $\L$ to $X_0$ and $X_\h$, respectively.
So we have
\[
L^2(I)=X_0\oplus X_\h
\]
and
\[
\sigma(\L_0)=\sigma_0(\L),\quad
\sigma(\L_\h)=\sigma_+(\L)\cup\sigma_-(\L).
\]
Let
\[
C_b(\Rset,L^2(I))=\Bigl\{\mathbf{u}(t)\in C(\Rset,L^2(I))
 \,\Big|\,\sup_{t\in\Rset}\bigl(e^{-b|t|}\|\mathbf{u}(t)\|\bigr)<\infty\Bigr\},
\]
where $b>0$ is a constant.
We also assume the following:
\begin{enumerate}
\setlength{\leftskip}{-0.8em}
\item[\bf(H2)]
For any $b\in[0,c]$ and any $\mathbf{f}\in C_b(\Rset,L^2(I))$
 the linear system
\begin{equation}
\frac{\d\mathbf{u}}{\d t}=\L_\h\mathbf{u}+\mathbf{f}(t)
\label{eqn:b2}
\end{equation}
has a unique solution $\mathbf{u}=:\K_\h\mathbf{f}\in C_b(\Rset,L^2(I))$.
Furthermore, the linear operator $\K_\h:C_b(\Rset,L^2(I))\to C_b(\Rset,L^2(I))$ is bounded
 and there exists a continuous map $C:[0,c]\to\Rset$ such that
\[
\|\K_\h\|\le C(b),
\]
where the norm $\|\cdot\|$ represents the operator norm.
\end{enumerate}
Since the Fourier expansion of any function in $L^2(I)$ converges to itself a.e.
 by Carleson's theorem \cite{C66},
we write \eqref{eqn:b2} as
\begin{equation}
\dot{\xi}_j=-\beta_j\xi_j+\hat{f}_{1j}(t),\quad
\dot{\eta}_j=-\beta_j\eta_j+\hat{f}_{2j}(t),\quad
j\neq\ell,
\label{eqn:b3}
\end{equation}
for the CL \eqref{eqn:csys} at $K=K_\ell$
 under the restriction \eqref{eqn:con} like \eqref{eqn:ifex},
 where $\hat{f}_{1j}(t)$ and $\hat{f}_{2j}(t)$ are the Fourier coefficients of $\mathbf{f}(t)$,
\[
f(t,x)=\sum_{i\neq\ell}(\hat{f}_{1i}(t)\cos 2\pi ix+\hat{f}_{2i}(t)\sin 2\pi ix).
\]
The linear system \eqref{eqn:b3} has a unique solution
\begin{equation}
\xi_j(t)=\int_{-\infty}^t e^{-\beta_j(t-\tau)}\hat{f}_{1j}(\tau)\d\tau,\quad
\eta_j(t)=\int_{-\infty}^t e^{-\beta_j(t-\tau)}\hat{f}_{1j}(\tau)\d\tau,\quad
j\neq\ell,
\label{eqn:b4}
\end{equation}
that belongs to $C_b(\Rset,L^2(I))$ if $\beta_j>0$ for any $j\neq\ell$.
Similarly we can easily show that the system \eqref{eqn:b3} has a unique solution
 in $C_b(\Rset,L^2(I))$ even if $\beta_j<0$ for some $j\neq\ell$
 (cf. Exercise 2.8 in Chapter~2 of \cite{HI11}).
 It is easy to show that the solution \eqref{eqn:b4} satisfies the remaining part of (H2).
Thus, the CL \eqref{eqn:csys} also satisfies (H2)
 at $K=K_\ell$ under the hypothesis of Theorem~\ref{thm:4a}.
Using Theorem~3.3 of Chapter~2 of \cite{HI11},
 we obtain the following.
  
\begin{thm}
\label{thm:b1}
Suppose that hypotheses {\rm(H1)} and {\rm(H2}) hold
 and let $O_u$, $O_0$ and $O_\mu$ be neighborhoods of the origins
 in $L^2(I)$, $X_0$ and $\Rset$, respectively.
Then there exists a map $\Psi\in C^l(O_0\times O_\mu,X_\h)$  such that
\[
\Psi(\mathbf{0},0)=\mathbf{0},\quad
\D\Psi(\mathbf{0},0)=\mathbf{0}
\]
and the manifold
\[
W^\c
 =\{\mathbf{u}_0+\Psi(\mathbf{u}_0,\mu)\mid(\mathbf{u}_0,\mu)\in O_0\times O_\mu\}
 \subset O_u\times O_\mu
\]
satisfies the following properties$:$
\begin{enumerate}
\setlength{\leftskip}{-1.8em}
\item[(i)]
$W^\c$ is locally invariant for $\mu\in O_\mu$,
 i.e., if $\mathbf{u}(t)$ is a solution to \eqref{eqn:b1}
 satisfying $\mathbf{u}(0)\in W^\c$ and $\mathbf{u}(t)\in O_u$ on $[0,\tau]$
 for some $\tau>0$, then  $\mathbf{u}(t)\in W^\c$ on $[0,\tau];$
\item[(ii)]
$W^\c$ contains all bounded solutions to \eqref{eqn:b1}
 staying in $O_u$ for all $t\in\Rset$.
\end{enumerate}
This means that $W^\c$ is a center  manifold.
\end{thm}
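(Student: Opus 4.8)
The plan is to recognize Theorem~\ref{thm:b1} as a direct specialization of the abstract center manifold theorem of Chapter~2 of \cite{HI11} and to verify that its hypotheses coincide with (H1) and (H2), so that essentially no new analysis is needed beyond transcribing the abstract conclusion into the present notation. The boundedness of $\L$ together with the verifications of (H1) and (H2) already carried out above for the CL \eqref{eqn:csys} at $K=K_\ell$ supply exactly the ingredients the abstract theorem requires.

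First I would use the spectral projection $\P_0$ to split \eqref{eqn:b1} into its $X_0$- and $X_\h$-components, writing $\mathbf{u}_0=\P_0\mathbf{u}$ and $\mathbf{u}_\h=(\id-\P_0)\mathbf{u}$; this yields a finite-dimensional equation on $X_0$ governed by $\L_0$ coupled to the infinite-dimensional part governed by $\L_\h$. The center manifold is then sought as a graph $\mathbf{u}_\h=\Psi(\mathbf{u}_0,\mu)$ over $O_0\times O_\mu$. To construct $\Psi$, I would run the Lyapunov--Perron scheme: for fixed $\mathbf{v}_0\in X_0$ and $\mu$ I look for a solution $\mathbf{u}\in C_b(\Rset,L^2(I))$ whose $X_\h$-component solves the inhomogeneous linear problem \eqref{eqn:b2} with forcing $(\id-\P_0)R(\mathbf{u};\mu)$. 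By (H2) this component is $\K_\h[(\id-\P_0)R(\mathbf{u};\mu)]$, and the whole trajectory is characterized as the fixed point of
\[
\mathbf{u}(t)\longmapsto \e^{t\L_0}\mathbf{v}_0
 +\int_0^t \e^{(t-s)\L_0}\P_0 R(\mathbf{u}(s);\mu)\,\d s
 +\K_\h\bigl[(\id-\P_0)R(\mathbf{u};\mu)\bigr](t)
\]
in $C_b(\Rset,L^2(I))$, with $\Psi(\mathbf{v}_0,\mu)$ defined as the value of the $X_\h$-component of this fixed point at $t=0$.

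Existence and $C^l$-smoothness of $\Psi$ then follow from a uniform contraction argument: since $R(\mathbf{0},0)=\mathbf{0}$ and $\D_\mathbf{u}R(\mathbf{0},0)=\mathbf{0}$, on a sufficiently small neighborhood the Lipschitz constant of the nonlinear map is bounded by $C(b)$ times a quantity tending to zero, so the map is a contraction, and the fiber-contraction argument of \cite{HI11} upgrades this to $C^l$-dependence on $(\mathbf{v}_0,\mu)$. The normalizations $\Psi(\mathbf{0},0)=\mathbf{0}$ and $\D\Psi(\mathbf{0},0)=\mathbf{0}$, the local invariance~(i), and the property~(ii) that $W^\c$ contains every small globally bounded solution are all read off directly from this fixed-point characterization. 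The main point here is not any single estimate but confirming that (H2) delivers $\K_\h$ together with a bound $C(b)$ that is continuous on the whole interval $[0,c]$; this uniformity in $b$ is precisely what makes both the contraction and the smooth parameter dependence go through, and once it is in place Theorem~3.3 of Chapter~2 of \cite{HI11} applies verbatim.
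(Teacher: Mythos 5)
Your proposal is correct and takes essentially the same route as the paper: the paper's entire proof consists of noting that (H1) and (H2) hold and then invoking Theorem~3.3 of Chapter~2 of \cite{HI11} verbatim, exactly as you conclude. Your additional Lyapunov--Perron sketch (spectral splitting, fixed-point characterization via $\K_\h$, uniform contraction, and the role of the continuous bound $C(b)$ on $[0,c]$ for $C^l$-smoothness) merely unpacks the internal proof of that cited theorem and is consistent with it.
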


Thus, there exists the center manifold $W^\c$ given by \eqref{eqn:Wc}
 in the CL \eqref{eqn:csys} under the condition~\eqref{eqn:con}.

% **********************************************************
% Bibliography
% **********************************************************

\end{document}